\numberwithin{equation}{section}
\newtheorem{definition}{Definition}[section]
\newtheorem{assumption}[definition]{Assumption}
\newtheorem{theorem}[definition]{Theorem}
\newtheorem{proposition}[definition]{Proposition}
\newtheorem{lemma}[definition]{Lemma}
\newcommand{\CX}{\mathcal{X}}
\newcommand{\CU}{\mathcal{U}}
\newcommand{\CE}{\mathcal{E}}
\newcommand{\dCX}{{\mathbb{X}}}
\newcommand{\R}{\mathbb{R}}
\newcommand{\N}{\mathbb{N}}
\newcommand{\Pb}{\mathbb{P}}
\newcommand{\D}{\mathbb{D}}
\newcommand{\Pbx}{\mathbb{P}_{\delta_x}}
\newcommand{\E}{\mathbb{E}}
\newcommand{\p}{\partial}
\newcommand{\de}{\delta}
\newcommand{\one}{\mathds{1}}
\newcommand{\CC}{\mathcal{C}}
\newcommand{\W}{{\mathcal{W}_1}}
\newcommand{\LL}{\mathcal{L}}
\author{H\'el\`ene Leman\thanks{CMAP, Ecole Polytechnique, UMR 7641, route de
    Saclay, 91128 Palaiseau Cedex-France; E-mail: \texttt{helene.leman@polytechnique.edu}}}
\title{Convergence of an infinite dimensional stochastic process to a spatially structured trait substitution sequence}
\date\today
\begin{document}

\maketitle

\begin{abstract}
 We consider an individual-based spatially structured population for Darwinian evolution in an asexual population. The individuals move randomly on a bounded continuous space according to a reflected brownian motion. The dynamics involves also a birth rate, a density-dependent logistic death rate and a probability of mutation at each birth event. We study the convergence of the microscopic process when the population size grows to $+\infty$ and the mutation probability decreases to $0$. We prove a convergence towards a jump process that jumps in the infinite dimensional space of the stable spatial distributions. The proof requires specific studies of the microscopic model. First, we examine the large deviation principle around the deterministic large population limit of the microscopic process. Then, we find a lower bound on the exit time of a neighborhood of a stationary spatial distribution. Finally, we study the extinction time of the branching diffusion processes that approximate small size populations.
\end{abstract}

\small
\noindent \textit{Keywords}: structured population; birth and death diffusion process; large deviations studies; exit time; branching diffusion processes; nonlinear reaction diffusion equations; weak stability; Trait Substitution Sequence.
\normalsize

\section{Introduction}
\label{sec_intro}

The spatial aspect is an important issue in ecology \cite{tilman_kareiva_1997,durrett_levin_1998}. The influence of the heterogeneity of the environment on the phenotypic evolution has been explored for a long time \cite{endler_1977,futuyma_moreno_1988,kassen_2002}. For example, the emergence of phenotypic clusters under a heterogeneous space has been extensively studied \cite{doebeli_dieckmann_2003, polechova_barton_2005, leimar_doebeli_dieckmann_2008}. In \cite{doebeli_dieckmann_2003, leimar_doebeli_dieckmann_2008}, the authors suggest that clustering and aggregation of individuals can be a consequence of the spatial competition between individuals. Those phenomena generate a structured population based on isolated patches. In \cite{polechova_barton_2005}, the authors draw attention to the influence of the boundary of the spatial environment. The sensibility to heterogeneously distributed resources is also a key point to study the spatial dynamics of population \cite{grant_grant_2002}. In this context, the effect of a spatial structure on the evolution of a population is fundamental.\\
\indent In this paper, we use a population model that describes the interplay between evolution and spatial structure. We are interested in a macroscopic approximation of the microscopic model under three main biological assumptions~: rare mutations, large population size and the impossibility of coexistence of two traits for a long time scale. The main result of this paper implies a convergence of the microscopic model to a jump process that jumps in an infinite dimensional space characterized by the spatial profiles of the population. This result is correlated with several works on adaptive dynamics and in particular with the model of Trait Substitution Sequence (TSS). Metz and al. \cite{metz_geritz_MJV_1996} have introduced this model from an ecological point of view. It describes the evolution of phenotypic traits in the case where the ecological time scale, related to the dynamics of the population, and the evolutionary time scale, related to the mutations, are separated. 
In the evolution time scale, the model describes the succession of invading phenotypic traits as a jump Markov process in the space of phenotypic traits. The link between the microscopic model and the TSS model has been completely proved by Champagnat \cite{champagnat_2006} in a simpler ecological context. Recently, some papers have generalized this approach in the case of an aged-structured population \cite{tran_2008}, of a multi-resources chemostat model \cite{champagnat_jabin_meleard_2014} or of a prey-predator model \cite{costa_hauzy_2014}. But only \cite{tran_2008} deals with some processes with values in infinite dimensional spaces, and the age structure is deterministic. In contrast with it, we are concerned with the spatial aspect of a population living on a heterogeneous environment where the individuals move randomly.\\
\indent We study an individual-based model in which any individual birth and death events are described. This Markov process has been initially introduced by Champagnat and Méléard in \cite{champagnat_meleard_2007}. For any time $t\geq 0$, each individual $i$ is described by two characteristics $(X^i_t, U^i_t)$. $X^i_t$ represents its location in an open, bounded and convex subset $\CX$ of $\R^d$ with a $C^2$-boundary. $U^i_t$ denotes its phenotypic trait which belongs to a compact subset $\CU$ of $\R^q$.  The phenotype of an individual does not change during its life time contrary to its location. The location space may represent a geographic landscape or a theoretical space that describes a gradient of temperature, a gradient of elevation or a resource parameter as seed size for a population of birds \cite{doebeli_dieckmann_2003}. In the context of the last example, the phenotype may represent the beak size of a bird and it is interesting to study the adequacy between the beak size and the seed size when evolution occurs \cite{grant_grant_2002}. \\
\indent The total population is represented at any time $t$ by the finite measure
\begin{equation}
 \nu^K_t =\frac{1}{K}\sum \limits_{i=1}^{N_t} \delta_{(X_t^i,U_t^i)},
\end{equation}
where $\delta_y$ corresponds to the Dirac measure at $y$, $N_t$ is the number of individuals at time $t$. The parameter $K$ scales the population size and the biological assumption of large population size is stated into mathematics by $K$ tends to $+\infty$.\\
\indent The dynamics of the process is driven by a birth and death diffusion process, in which the motion, birth, mutation and death of each individual depends on its location and trait.\\
\indent 
Any individual $i$ with phenotypic trait $u$ moves according to a diffusion process driven by the following stochastic differential equation normally reflected at the boundary $\p\CX$,
\begin{equation}
\label{eq_brownien}
         dX^i_t=\sqrt{2m^u} Id \cdot dB_t  - n(X^i_t)dl_t \\
\end{equation}
where $B$ is a $d$-dimensional brownian motion, $l_t$ is an adapted continuous and non-decreasing process with $l_0 = 0$, it increases only when $X^i_t \in \p \CX$ and the diffusion coefficient $m^u$ is a function of the trait.\\
\indent We consider a population with asexual reproduction. An individual with location $x\in \CX$ and trait $u\in \CU$ gives birth at rate $b(x,u)$. This rate can also be denoted by $b^u(x)$ when $u$ is fixed and it is assumed to be bounded.
The offspring appears at the location of its parents. Furthermore, a mutation may occur with probability $q_Kp$, making the phenotypic trait of the offspring different.
The law of the mutant trait is then given by a kernel $k(x,u,\cdot)$.
The mutation probability $p$ may depend on the trait and the location.
The parameter $q_K$ scales the mutation probability and the biological assumption of rare mutations is stated by $q_K \to 0$.\\
\indent The death rate depends on the characteristics of the individual and on the competition between all individuals. The natural death rate is $d(x,u)$. The competition exerted by an individual $(y,v)$ on an individual $(x,u)$ depends on the location $y$ and on the two traits through a competition kernel $c:\CU \times \CX \times \CU  \to \R^+$. For the population $\nu=\frac{1}{K}\sum_{i=1}^n \delta_{(x_i,u_i)} \in M_F(\CX\times\CU)$, the competitive pressure exerted on individual $(x,u)$ is
\begin{eqnarray*}
 c\cdot\nu(x,u) =\dfrac{1}{K} \sum_{i=1}^n c(u,x_i,u_i) =  \dfrac{1}{K} \int \limits_{\CX \times \CU} c(u,y,v) \nu(dy,dv).
\end{eqnarray*}
Remark that the competition kernel does depend on $y$. This spatial dependence yields non-trivial mathematical difficulties. In Champagnat-Méléard~\cite{champagnat_meleard_2007}, the competition kernel depends also on $x$ but the long time behavior of the deterministic limit is still unknown, to our knowledge. 
Finally, the total death rate is $d(x,u)+c\cdot \nu(x,u)$. 
As for the birth rate, $d(x,u)$ can also be denoted by $d^u(x)$ and $c(u,y,v)$ by $c^{uv}(y)$ when $u$ and $v$ are fixed.\\
Let us state the assumptions on the parameters.
\begin{assumption}
\label{ass_coef}
\begin{enumerate}[noitemsep]
 \item $m$, $b$, $d$, $k$ and $c$ are continuous and non-negative on their domains and $b$, $d$ and $c$ are Lipschitz functions with respect to $x$ and $y$.
 \item There exist $\bar m$, $\underbar b$, $ \bar b$, $\bar d$, $\bar c$, $\underbar c$, $\bar k \in \R$ such that for any $ (x,u,y,v) \in (\bar\CX \times \CU)^2$, 
$ 0 < m^u \leq \bar m,\quad \underbar b < b(x,u) \leq \bar b, \quad d(x,u) \leq \bar d,\quad \underbar c \leq c(u,y,v) \leq  \bar c, \quad k(x,u,v) \leq \bar k ,$
and $d$ is not the zero function.
\item The sequence of initial measures $(\nu_0^K)_{K>0}$, which belongs to $M_F(\CX\times\CU)$, converges in law to some deterministic measure denoted by $\xi_0$ and it satisfies $\sup_K \E[\langle \nu_0^K,1 \rangle^3] < +\infty$.
\item $q_K$ tends to $0$ when $K$ tends to $+\infty$.
\end{enumerate}
\end{assumption}

\indent Before going further, let us set and recall the notation, which we use in the entire paper.\\
\noindent
\textbf{Notation}
\begin{itemize}[noitemsep]
 \item For all $x \in \partial\CX$, $n(x)$ denotes the outward normal to the boundary of $ \CX$ at point $x$.
 \item For sufficiently smooth $f$ and for all $(x,u)\in \partial\CX\times \CU$, $\partial_nf(x,u)$ denotes the scalar product $\nabla_xf(x,u) \cdot n(x)$.
 \item $C^{k,l}_n(\CX \times \CU)$ represents the set of functions $f$ such that $f\in C^{k,l}(\CX \times \CU)$ and $\partial_nf(x,u)=0$ for all $(x,u) \in \partial\bar{\CX}\times \CU$. We define $C^{k,l,j}_n(\CX \times \CU \times [0,T])$ similarly.
 \item For any $f\in C^{k,l,j}_n(\CX \times \CU \times [0,T])$, $f_s$ is the function on $\CX \times \CU$ such that $f_s(x,u)=f(x,u,s)$.
 \item For any compact set $\dCX$, we denote the space of finite measures on $\dCX$ by $M_F(\dCX)$.
 \item $C^{Lip}(\dCX)$ denotes the set of all positive Lipschitz-continuous functions $f$ on $\dCX$ bounded by $1$ and with a Lipschitz constant smaller than $1$.
 \item We define the Kantorovich-Rubinstein distance on $M_F(\dCX)$ by~: for any $\nu, \mu \in M_F(\dCX)$,
 \begin{equation*} 
 \W(\nu,\mu)=\sup_{f\in C^{Lip}(\dCX)} \left\vert \int_{\dCX}fd\nu-\int_{\dCX}fd\mu \right\vert.
 \end{equation*}
 As $\dCX$ is a compact set, this metric is a metrization of the topology of weak convergence. It is equivalent to the $1^{st}$-Wasserstein distance.
 \item $B(\nu ,\gamma)$ represents the ball of center $\nu$ and radius $\gamma$ in $M_F(\dCX)$ for the previous distance.
  \item $\mathbb{D}([0,T],M_F(\dCX))$ denote the space of càdlàg functions from $[0,T]$ to $M_F(\dCX)$, equipped with the Skorokhod topology. 
 \item For any $\xi\in M_F(\CX\times\{u,v\})$, we identify the two following ways of writing~: $\xi(dx,dw)=\xi^u(dx)\delta_u(dw)+\xi^v(dx)\delta_v(dw)$ and $\xi=(\xi^u,\xi^v)\in (M_F(\CX))^2$.
\end{itemize}

\section{Main theorem}
\label{sec_theo}

\indent A full algorithmic description and a mathematical formulation of the model described in the previous part are detailed in Champagnat and Méléard \cite{champagnat_meleard_2007}. Moreover, a macroscopic approximation has been proved as a large population limit.
\begin{theorem}[Theorems 4.2 and 4.6 in \cite{champagnat_meleard_2007}]
\label{theo_limitpop}
Suppose that Assumption~\ref{ass_coef} holds. For all $T>0$, the sequence $(\nu^K)_{K>0}$ of processes belonging to $\mathbb{D}([0,T],M_F(\CX\times \CU))$ converges in law to a deterministic and continuous function $\xi$, i.e. $\xi \in \mathbb{C}([0,T],M_F(\CX\times \CU))$ such that $\sup_{t \in [0,T]} \langle \xi_t,1 \rangle < +\infty$ and  $\forall f \in C^{2,0}_n(\CX\times \CU)$,
\begin{multline}
 \langle \xi_t,f \rangle =\langle \xi_0,f \rangle +\int \limits_0^t \int \limits_{\CX\times \CU} \bigg\{ m^u \Delta_xf(x,u)+\big[ b(x,u)-d(x,u)-c\cdot\xi_s(x,u) \big] f(x,u) \bigg\} \xi_s(dx,du)ds.
\label{eq_theoapproxpop}
\end{multline}
Moreover, if $\CU$ is finite, for any $u\in\CU$ and $t>0$, $\xi_t(.,u)$ has a density with respect to Lebesgue measure which is a $C^2$-function.
\end{theorem}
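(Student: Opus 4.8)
The plan is to run the classical four-step scheme for passing from an individual-based model to a deterministic reaction-diffusion limit, adapted here to the normally reflected diffusion. First I would write the semimartingale decomposition of $\langle\nu^K_t,f\rangle$ for $f\in C^{2,0}_n(\CX\times\CU)$: applying the generator of the $K$-particle system to $\nu\mapsto\langle\nu,f\rangle$ produces the motion term $\tfrac1K\sum_i m^{u_i}\Delta_xf(x_i,u_i)$ -- the local-time part of \eqref{eq_brownien} contributing a boundary term proportional to $\partial_nf$, which vanishes precisely because $f\in C^{2,0}_n$ -- together with the birth, mutation and death contributions, so that
\begin{equation*}
\langle\nu^K_t,f\rangle=\langle\nu^K_0,f\rangle+\int_0^t\!\!\int_{\CX\times\CU}\Big\{m^u\Delta_xf+\big(b-d-c\cdot\nu^K_s\big)f\Big\}\,\nu^K_s(dx,du)\,ds+M^{K,f}_t+R^K_t,
\end{equation*}
with $M^{K,f}$ a martingale and $R^K$ gathering the mutation terms. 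Since each jump changes $\langle\nu^K,f\rangle$ by $O(1/K)$ while the total jump rate is $O(K\langle\nu^K,1\rangle)$, one has $\E[\langle M^{K,f}\rangle_T]=O(1/K)$ and $\|R^K\|_\infty=O(q_K)$. Next I would prove the uniform moment bound $\sup_K\E[\sup_{t\le T}\langle\nu^K_t,1\rangle^3]<\infty$ by dominating $N_t$ with a pure-birth process of individual rate $\bar b$ and using $\sup_K\E[\langle\nu^K_0,1\rangle^3]<\infty$ from Assumption~\ref{ass_coef}.

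That moment bound yields simultaneously the compact containment condition, as $\{\mu\in M_F(\CX\times\CU):\langle\mu,1\rangle\le A\}$ is weakly compact ($\CX\times\CU$ being compact), and, through the Aldous--Rebolledo criterion applied to $\langle\nu^K_\cdot,f\rangle$ for $f$ in a countable convergence-determining family of $C^{2,0}_n$, the tightness of $(\nu^K)$ in $\D([0,T],M_F(\CX\times\CU))$. For any limit point $\nu^K\Rightarrow\xi$ one passes to the limit in the decomposition: $M^{K,f}$ and $R^K$ vanish, so $\xi$ is deterministic, continuous, and solves \eqref{eq_theoapproxpop}. Uniqueness of solutions of \eqref{eq_theoapproxpop} in $\mathbb{C}([0,T],M_F(\CX\times\CU))$ I would obtain from a Gronwall estimate in the Kantorovich--Rubinstein distance $\W$: testing the difference of two solutions against $f\in C^{Lip}(\CX\times\CU)$ and using the Lipschitz bounds on $b,d,c$ together with the $\W$-contractivity of the heat semigroup $(P^u_t)$ generated by $m^u\Delta$ with Neumann condition -- here the convexity of $\CX$ is what makes the synchronous coupling non-expansive -- one gets $\W(\xi_t,\tilde\xi_t)\le C\int_0^t\W(\xi_s,\tilde\xi_s)\,ds$, the a priori mass bound being needed to handle the quadratic term $c\cdot\xi_s$. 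Uniqueness of the limit then upgrades subsequential convergence to the full convergence in law.

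For the regularity statement, assume $\CU$ finite and fix $u$. Then $\rho_t:=\xi_t(\cdot,u)$ solves weakly, with Neumann boundary condition, $\partial_t\rho=m^u\Delta\rho+g_s\rho$, where $g_s(x)=b(x,u)-d(x,u)-c\cdot\xi_s(x,u)$ is bounded and, by the Lipschitz assumptions on $b,d,c$, Lipschitz in $x$ uniformly on $[0,T]$. Writing the mild form $\rho_t=P^u_t\xi_0(\cdot,u)+\int_0^tP^u_{t-s}(g_s\rho_s)\,ds$, the smoothing of $P^u_t$ for $t>0$ already gives a bounded density at every positive time even when $\xi_0(\cdot,u)$ is only a finite measure, and a standard parabolic bootstrap -- Schauder estimates for the Neumann problem on the $C^2$-domain $\CX$, using that $g$ is H\"older in $x$ -- then makes $\rho$ a $C^2$ function in $x$ for each $t>0$.

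The hardest points will be two. First, justifying rigorously that the reflection term drops out: this is why \eqref{eq_theoapproxpop} is tested only against $C^{2,0}_n$ functions and why an It\^o formula for the normally reflected diffusion is needed. Second, the uniqueness of \eqref{eq_theoapproxpop}, since the death rate carries the term $c\cdot\xi_s$, which is quadratic and not Lipschitz in the measure, so the $\W$-Gronwall argument must be run jointly with the uniform mass bound. The regularity part is comparatively routine once the PDE has been isolated, the only care being the curved Neumann boundary.
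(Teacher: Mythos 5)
This theorem is quoted in the paper as a citation of Theorems 4.2 and 4.6 of Champagnat--M\'el\'eard \cite{champagnat_meleard_2007} and is not proved in the present article, so the only thing to compare against is the cited reference. Your reconstruction matches that reference's scheme step for step (generator/semimartingale decomposition with vanishing boundary term on $C^{2,0}_n$, third-moment propagation by a pure-birth domination, compact containment plus Aldous--Rebolledo tightness, identification and then uniqueness via a Gronwall estimate in the Kantorovich--Rubinstein distance built on the Lipschitz-preserving Neumann semigroup and the a priori mass bound, and finally mild-formulation smoothing for the $C^2$ density when $\CU$ is finite) and is correct; the only cosmetic slip is calling the semigroup $\W$-contractive when what you use, and correctly describe in the parenthetical, is non-expansiveness of Lipschitz constants under synchronous coupling on the convex domain.
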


\indent The limiting equation~\eqref{eq_theoapproxpop} is a nonlinear nonlocal reaction-diffusion equation defined on the space of traits and locations. In \cite{desvillettes_ferriere_prevost_2004, arnold_desvillettes_prevost_2012}, the authors have studied the existence of the steady states of similar equations in the context of frequent mutations. Our study involves a rare mutations assumption and mutation terms disappear in the limit. The stability of the steady states and the long time behavior of the solutions to \eqref{eq_theoapproxpop} have been characterized in \cite{leman_mirrahimi_meleard_2014, coville_2013} in the particular cases of a monomorphic population (all individuals have a same phenotype) 
 and a dimorphic population (two traits are involved). The stationary states and their stability are described using the following parameters.
\begin{definition}
\label{def_equilibrium}
For any $u\in \CU$, we define $H^u$ by,
\begin{equation}
\label{eq_defH}
 H^u=-\min_{\phi \in H^1(\CX), \phi \not\equiv 0} \dfrac{1}{\|\phi \|^2_{L^2(\CX)}} \left[ m^u\int_{\mathcal{X}} |\nabla \phi(x) |^2dx-\int_{\mathcal{X}}(b^u-d^u)(x)\phi^2(x)dx\right],
\end{equation}
where $H^1(\CX)$ is the Sobolev space of order $1$ on $\CX$. $H^u$ is thus the principal eigenvalue of the operator $m^u\Delta_x \cdot +(b^u-d^u) \cdot $ with Neumann boundary condition on $\CX$.
Let $\bar g^u \in H^1(\CX) $ be the eigenfunction of the previous operator associated with the eigenvalue $H^u$ such that 
$$\int_{\CX} c^{uu}(y)\bar g^u(y)dy=H^u.$$
According to~\cite{leman_mirrahimi_meleard_2014}, $\bar g^u \in C^1(\CX)$. If $\bar g^u\geq 0$, we define the associated measure in $M_F(\CX)$
$$
\bar\xi^u(dx):=\bar g^u(x)dx.
$$
Finally, for any $(u,v)\in \CU$, we set
\begin{equation*}
 \kappa^{vu}:={\int_{\CX} c^{vu}(y)\bar g^u(y)dy}\left({\int_{\CX} \bar g^u(y)dy}\right)^{-1}.
\end{equation*}
\end{definition}
As proved in \cite{leman_mirrahimi_meleard_2014}, $H^u>0$ is the condition ensuring that a monomorphic population with trait $u$ is able to survive for a long time at the ecological time scale. In that case, the stationary stable state is described by the positive spatial profile $\bar g^u$. \\
The dimorphic case implies four distinct stationary states~: the trivial state $(0,0)$, two monomorphic states and one co-existence state. To ensure that the co-existence state is unstable, we set the following assumption. 
\begin{assumption}
\label{ass_noncoexistence}
 Let $u$ be in $\CU$, for almost all $v\in \CU$,
\begin{enumerate}
 \item \label{cond_1} either, $H^v \kappa^{uu} -H^u \kappa^{vu} < 0$, 
 \item \label{cond_2} or, $\left\{ \begin{aligned} & H^v \kappa^{uu} -H^u \kappa^{vu}>0 \\ & H^u \kappa^{vv} -H^v \kappa^{uv}<0.  \end{aligned}\right.$
\end{enumerate}
\end{assumption}
Using \cite{leman_mirrahimi_meleard_2014}, we notice that Assumption~\ref{ass_noncoexistence} states into mathematics the impossibility of co-existence of two traits for a long time, this assumption is also known as the "Invasion-Implies-Fixation" principle. Under Assumption~\ref{ass_noncoexistence}, any solution to \eqref{eq_theoapproxpop} converges either to $(\bar g^u,0)$, or to $(0,\bar g^v)$, which are two monomorphic states.
More precisely, Condition~\ref{cond_1} ensures the stability of the equilibrium $(\bar g^u, 0)$. Thus, if a mutant population with phenotype $v$ is emerging in a monomorphic well-established population with phenotype $u$, it will not be able to survive. 
Under Condition~\ref{cond_2}, the authors of \cite{leman_mirrahimi_meleard_2014} prove that the deterministic solution to \eqref{eq_theoapproxpop} converges to the stable equilibrium $(0,\bar g^v)$ whatever the initial condition is.
In the light of the previous considerations, we refer to $H^v \kappa^{uu} -H^u \kappa^{vu}$ as the invasion fitness of the individuals with type $v$ in a resident population with type $u$.
Furthermore, the probability of success of such an invasion is described precisely 
by means of the geographical birth position $x_0$ of the first individual with trait $v$ and the function $\phi^{vu}$ defined below. That probability is precisely $\phi^{vu}(x_0)$. 
\begin{definition}
\label{def_phivu}
For any $u, v\in \CU$, $\phi^{vu}$ is the function on $\CX$ such that
\begin{enumerate}
 \item If $H^v \kappa^{uu} -H^u \kappa^{vu} \leq 0$, $\phi^{vu}(x)=0$ for all $x\in \CX$.
 \item If $H^v \kappa^{uu} -H^u \kappa^{vu} > 0$, $\phi^{vu}$ is the unique positive solution to the elliptic equation
\begin{equation}
\label{eq_survivalprob}
 \left\{ 
\begin{aligned}
 &m^v \Delta_x \phi(x) + \left(b^v(x)-d^v(x)-\int_{\CX}c^{vu}(y)\bar g^u(y)dy\right)\phi(x)-b^v(x) \phi(x)^2=0, \forall x \in \CX,\\
&\p_n \phi (x)=0, \forall x \in \p \CX.
\end{aligned}
\right.
\end{equation}
\end{enumerate}
\end{definition}

We are now ready to state the main result of this paper. 
\begin{theorem}
\label{theo_main}
We suppose that Assumptions~\ref{ass_coef} and~\ref{ass_noncoexistence} hold. We also assume that the scaling parameters satisfy
\begin{equation}
\label{ass_KqK}
K q_K\log(K) \underset{K\to +\infty}{\longrightarrow} +\infty \; \text{ and } \; K q_K e^{KV} \underset{K\to +\infty}{\longrightarrow} 0, \text{ for any } V>0.
\end{equation}
Then for any $T>0$, $\left(\nu^K_{({t}/{Kq_K})}\right)_{t\in[0,T]}$ converges towards a jump Markov process $(\Lambda_t)_{t\geq [0,T]}$ as $K\to+\infty$. At any time $t$, $\Lambda_t$ belongs to the subspace $\{ \bar\xi^u\delta_u, u\in \CU\}$ of $ M_F(\CX\times \CU)$, where for any $u\in \CU$, $\bar\xi^u\in M_F(\CX)$ is the spatial pattern defined in Definition~\ref{def_equilibrium}. The process $(\Lambda_t)_{t\geq 0}$ jumps from the state characterized by the trait $u\in \CU$ to the state characterized by $v\in \CU$ at the infinitesimal rate
\begin{equation*}
\int_{\CX} p b^u(x)\phi^{vu}(x)\bar g^u(x)k(x,u,v)dx dv.
\end{equation*}
This convergence holds in the sense of convergence of the finite dimensional distributions.
\end{theorem}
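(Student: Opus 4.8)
The proof follows the scheme of Champagnat~\cite{champagnat_2006} for the trait substitution sequence, adapted to the present situation where the macroscopic state is an infinite-dimensional spatial profile $\bar\xi^u\delta_u$ rather than a point of $\CU$. The picture to be made rigorous is: on the time scale $1/(Kq_K)$ the population is, most of the time, a monomorphic cloud concentrated near one equilibrium $\bar\xi^u\delta_u$; it is occasionally hit by a mutant, whose fate --- extinction or fixation --- is decided in a time negligible on the macroscopic scale; and the successive fixations are asymptotically Poissonian. Concretely I would fix a small $\epsilon>0$, introduce the stopping times of entrance into and exit from the balls $B(\bar\xi^u\delta_u,\epsilon)$ together with the mutation times, and --- since only finite-dimensional distributions are at stake --- reduce the statement to two points: (i) at each fixed $t_i$, with probability tending to $1$ the rescaled time $t_i/(Kq_K)$ lies inside a monomorphic phase during which $\nu^K\in B(\bar\xi^{u}\delta_u,\epsilon)$ for the current resident trait $u$; and (ii) the resident-trait process observed at $t_1<\dots<t_n$ converges in law to the jump process with the announced rates. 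Together with the continuity of $u\mapsto\bar\xi^u\delta_u$ and the strong Markov property, (i)--(ii) give the theorem.

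For the monomorphic phase, start from $\nu^K\in B(\bar\xi^u\delta_u,\epsilon)$, monomorphic with trait $u$. On any bounded horizon in original time, Theorem~\ref{theo_limitpop} together with the long-time convergence of the monomorphic version of \eqref{eq_theoapproxpop} to $\bar\xi^u$ (from \cite{leman_mirrahimi_meleard_2014}) keeps $\nu^K$ in $B(\bar\xi^u\delta_u,\epsilon)$ with probability $\to1$; the large-deviation lower bound on the exit time of that ball (proved later in the paper) upgrades this to original times of order $e^{VK}$, and under \eqref{ass_KqK} this is far beyond the mutation scale $1/(Kq_K)$. Hence on the rescaled scale the resident trait changes only at a mutation, and by the law of large numbers the mutations producing a trait in $dv$ at a location in $dx$ arrive at original rate asymptotic to $Kq_K\,p\,b^u(x)\,\bar g^u(x)\,k(x,u,v)\,dx\,dv$, i.e. at an $O(1)$ rate on the rescaled scale.

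The core of the argument is the fate of a single mutant born at $x_0$ with trait $v$. While the mutant clade has size $o(K)$ and the resident stays in $B(\bar\xi^u\delta_u,\epsilon)$, I would sandwich the clade between two branching diffusions with generator $m^v\Delta_x$, branching (one particle into two) at rate $b^v$, and death at rate $d^v+\int_{\CX}c^{vu}(y)\bar g^u(y)\,dy\pm C\epsilon$, the error $C\epsilon$ absorbing both the displacement of the resident from $\bar\xi^u$ and the mutant's own logistic self-competition. Equation~\eqref{eq_survivalprob} of Definition~\ref{def_phivu} is precisely the survival-probability equation $m^v\Delta_x\phi+(b^v-d^v-\int_{\CX}c^{vu}\bar g^u)\phi-b^v\phi^2=0$ for such a branching diffusion, so the extinction-time study of branching diffusions carried out later in the paper shows that these bounding processes, started from one particle at $x_0$, survive with probabilities $\phi^{vu,\pm C\epsilon}(x_0)\to\phi^{vu}(x_0)$ as $\epsilon\to0$. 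Consequently, with conditional probability tending to $1-\phi^{vu}(x_0)$ the clade dies out within original time $O(\log K)$, leaving no macroscopic trace, and with probability tending to $\phi^{vu}(x_0)$ it reaches size $\epsilon K$ within original time $O(\log K)$. By Assumption~\ref{ass_noncoexistence} the latter can occur only when $H^v\kappa^{uu}-H^u\kappa^{vu}>0$, i.e. under Condition~\ref{cond_2}; in that case, feeding the dimorphic state into Theorem~\ref{theo_limitpop} with $\CU=\{u,v\}$ and invoking the "invasion implies fixation" convergence of \eqref{eq_theoapproxpop} to $(0,\bar g^v)$ from \cite{leman_mirrahimi_meleard_2014}, a further original time $O(1)$ brings $\nu^K$ into $B(\bar\xi^v\delta_v,\epsilon)$. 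All these auxiliary spans are negligible compared with $1/(Kq_K)$ under \eqref{ass_KqK}, so over any bounded rescaled horizon --- which contains only finitely many mutations --- a union bound shows that with probability $\to1$ none of these phases overlap, and none is interrupted by a fresh mutation or by a large-deviation excursion of the resident.

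It remains to identify the limiting jump rate. A thinning / competing-exponentials computation on top of the mutation rate above shows that, on the rescaled scale, the fixations producing a trait in $dv$ converge to a Poisson process of intensity $\big(\int_{\CX}p\,b^u(x)\,\phi^{vu}(x)\,\bar g^u(x)\,k(x,u,v)\,dx\big)\,dv$, precisely the rate in the statement, with deleterious mutations contributing nothing because $\phi^{vu}\equiv0$ under Condition~\ref{cond_1}. Restarting from $\bar\xi^v\delta_v$ and iterating through the strong Markov property gives (ii); combined with (i) this is the asserted convergence of finite-dimensional distributions. I expect the delicate step to be the coupling of the third paragraph: one needs matching upper and lower bounds on the mutant's survival probability, both converging to $\phi^{vu}(x_0)$, uniformly in the random birth location $x_0$ and robustly under the $O(\epsilon)$ perturbation of the resident --- in particular the identity $\Pb(\text{clade survives})=\lim_{A\to\infty}\lim_{K\to\infty}\Pb(\text{clade reaches size }A)$ and the propagation of the fluid limit from $A$ individuals up to size $\epsilon K$, where the stochastic branching phase and the deterministic invasion phase have to be matched carefully. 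Controlling the cumulative effect of the many unsuccessful mutations on the resident is a second, more routine, technical point.
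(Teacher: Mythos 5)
Your proposal tracks the paper's own proof essentially step for step: the paper's Proposition~\ref{prop_mono} is your monomorphic phase (exit-time control via Theorem~\ref{theo_exittimeintro} and the large deviation bound of Theorem~\ref{theo_majoration}, plus the Poisson clock for mutations), and Proposition~\ref{prop_dimo} is your dimorphic phase (sandwich of the mutant clade between the two branching diffusions $Z^{\text{inf}}$ and $Z^{\text{sup}}$, survival probability from Theorem~\ref{theo_survieinf}, time-to-size-$\epsilon K$ from Theorem~\ref{theo_explosion}, perturbed equation $\phi^{\gamma,vu}\to\phi^{vu}$ as $\gamma\to 0$, then ``invasion implies fixation'' from \cite{leman_mirrahimi_meleard_2014} to reach $B(\bar\xi^v\delta_v,\gamma)$), with the final assembly via the strong Markov property exactly as in Lemma~3 and Theorem~1 of \cite{champagnat_2006}. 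The one place you are slightly looser than the paper is that the paper also perturbs the birth rate of $Z^{\text{inf}}$ by $(1-\gamma)$ to absorb the subtraction of mutation events from the mutant's own births (not just the death-rate perturbation you mention), but this is a minor bookkeeping detail and the argument is otherwise the same.
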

Remark that $\bar \xi^u$ describes the spatial distribution of the monomorphic population with trait $u$. The limiting process jumps from a spatial distribution to another one depending on the mutant trait. It models an evolutionary phenomenon using a sequence of monomorphic equilibria described by their spatial patterns.\\
\indent Although the structure of Theorem~\ref{theo_main}'s proof is similar to the one of Theorem 1 in \cite{champagnat_2006}, the spatial structure of the process leads us to deal with infinite dimensional processes. Two key points of the proof have to be approached differently. The first point concerns the study of the process when it is close to a monomorphic deterministic equilibrium. The aim is to estimate the exit time of a neighborhood of a stationary state to \eqref{eq_theoapproxpop}. We give the behavior of the stochastic process around its deterministic equilibrium taking into account a small mutant population and the possibility of other mutations. Thanks to it, we avoid the comparisons used in \cite{champagnat_2006}, where the behavior of the resident population process is compared with the behavior of a theoretical monomorphic population evolving alone. Those comparisons are much more involved when the population is spatially structured. Moreover, to estimate this exit time, we have to study a large deviation principle of the stochastic process $(\nu^K_t)_{t\geq 0}$ around its deterministic limit~\eqref{eq_theoapproxpop} when $K$ is large. The large deviations studies for processes combining diffusion process and jumps are still unresolved, to our knowledge. Those studies have thus their own interest. 
The second point which is approached differently concerns the study of small population size processes. The aim is to understand the dynamics of a population descended from a mutant which has appeared in a well-established monomorphic population. As long as the mutant population size is small, the competitive terms between mutants can be neglected. Thus, the dynamics of the mutant population can be compared with the dynamics of a branching diffusion process. We describe finely the survival probability of a branching diffusion process by means of the eigenparameters defined previously and we link it with the conditions presented in Assumption~\ref{ass_noncoexistence}.\\
\indent In Section~\ref{sec:largedeviation}, we explicit the upper bound of the large deviation principle by using ideas in \cite{dembo_zeitouni_1998, leonard_1995, tran_2008}. Then we study the functional rate associated with the large deviation principle. 
Section~\ref{sec_lowerbound} deals with the exit time of a neighborhood of a stationary state to \eqref{eq_theoapproxpop}. 
In Section~\ref{sec_survivalproba}, we study a branching diffusion process. First, we evaluate its probability of survival. Then, we characterize the scale time under which its size is of order $K$.
Section~\ref{sec_TSS} is devoted to the proof of Theorem~\ref{theo_main}. We detail two key propositions. The first one deals with the dynamics of the individual-based process in the case where only one trait are involved. The second one gives the dynamics of the process after the time of the first mutation but as long as only at most two traits are involved.
Finally, Section~\ref{sec_numerics} presents a numerical example that illustrates Theorem~\ref{theo_main}.\\

\section{Exponential deviations results}
\label{sec:largedeviation}

In this section, we are concerned by the large deviations from the large population limit \eqref{eq_theoapproxpop} for the process $(\nu^K_t)_{t\in [0,T]}$ when $K$ tends to $+\infty$ and $q_K$ tends to $0$. First of all, Theorem~\ref{theo_majoration} gives the upper bound of the large deviations principle. This theorem involves a rate function.
Let us first explicit it.
That requires specific notation which will be only used in this subsection~: let us fix $T>0$, 
\begin{itemize}[noitemsep]
\item $\CE=\CX\times \CU \times \{1,2\}$.
\item $\psi$ is the mapping such that for any function $f\in C^{2,0,1}(\CX\times \CU\times [0,T])$, for any $(x,u,\pi,t)\in \CE \times [0,T]$,
\begin{equation*}
 \psi(f)(x,u,t,\pi)= \left\{
 \begin{aligned}
  &f(x,u,t) &\text{ if } \pi=1,\\
  &-f(x,u,t) &\text{ if } \pi=2.
 \end{aligned}
\right.
\end{equation*}
\item For all $\nu=(\nu_t)_{t\in[0,T]} \in \mathbb{D}([0,T],M_F(\CX\times \CU))$, we define the positive finite measure 
\begin{align}
\label{eq_defmu}
 \mu^{\nu}_t(dx,du,d\pi)=\big[b(x,u)\delta_1(d\pi)
+(d(x,u)+c\cdot\nu_{t-}(x,u))\delta_2(d\pi)\big] \nu_{t-}(dx,du).
\end{align}
\item Finally, we introduce the log-Laplace transform $\rho$ of a centered Poisson distribution with parameter $1$,
$ \rho(x)=e^x-x-1,$
and its Legendre transform $\rho^*$,
\begin{equation*}
 \rho^*(y)=((y+1) \log(y+1)-y)\one_{\{y>-1\}}+\one_{\{y=-1\}} +\infty \cdot \one_{\{y<-1\}}. 
\end{equation*}
\end{itemize}

\bigskip

\noindent
We are now ready to define the rate function in which we shall be interested~:
for all $\xi_0 \in M_F(\CX\times\CU)$ and $\nu\in \mathbb{D}([0,T], M_F(\CX\times\CU))$, 
\begin{equation}
\label{def:Inu}
 I^T_{\xi_0}(\nu):=\left\{ 
 \begin{aligned}
  &\sup_{f \in C^{2,0,1}_n(\CX\times \CU \times [0,T])} I^{f,T}(\nu), \text{ if } \nu_0=\xi_0\\
  & +\infty, \text{ otherwise},
 \end{aligned}
\right.
\end{equation}
where
\begin{equation*}
\begin{aligned}
 I^{f,T}(\nu):=& \langle \nu_T, f_T \rangle -\langle \nu_0, f_0 \rangle -\int_0^T \langle  m \Delta_x f_s+m|\nabla_xf_s|^2 + \dfrac{\partial f_s}{\partial s} ,\nu_s\rangle ds\\
&-\int_0^T \int_{\CE} \Big(\psi(f)(x,u,s,\pi)+\rho(\psi(f)(x,u,s,\pi))\Big) d\mu^{\nu}_s ds.
 \end{aligned}
\end{equation*}
When there is no ambiguity, we will write $I^T(\nu)$ instead of $I^T_{\nu_0}(\nu)$.

\begin{theorem}
\label{theo_majoration}
 Suppose that Assumptions~\ref{ass_coef} holds.
 For all $\alpha >0$, $\xi_0 \in M_F(\CX\times\CU)$, for all compact set $\CC \subset B(\xi_0,\alpha)$, for all measurable subset $A$ of $\mathbb{D}([0,T], M_F(\CX\times\CU))$ such that there exists $M>0$ with $A\subset \{ \nu | \sup_{t\in [0,T]}\langle \nu_t,\one \rangle \leq M\} $,
\begin{equation}
\label{eq_GDavecsup}
 \underset{{K\to +\infty}}{\limsup} \; \frac{1}{K} \;\underset{\nu_0^K \in \CC \cap M_F^K}{\sup}\; \log \Pb_{\nu^K_0}(\nu^K \in A) \leq -\underset{\xi \in \CC, \nu \in \bar{A}}{\inf} \; I^T_{\xi}(\nu),
\end{equation}
where $M_F^K=\{\frac{1}{K} \sum_{i=1}^N\delta_{(x_i,u_i)}, \text{ with } N\in \N, (x_i,u_i)\in \CX \times \CU \}$.
\end{theorem}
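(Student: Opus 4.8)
The plan is to establish the large deviation upper bound by the classical exponential-martingale (Cramér/Gärtner–Ellis) route, adapted to a measure-valued process that combines diffusion and jumps, following the pattern of Léonard and of Tran's age-structured work. The starting point is the semimartingale decomposition of $\langle \nu^K_t, f_t \rangle$ for test functions $f \in C^{2,0,1}_n(\CX\times\CU\times[0,T])$: the diffusive part contributes $\langle m\Delta_x f_s + \partial_s f_s, \nu^K_s\rangle$ together with a continuous martingale, while the birth/death/mutation events contribute jumps of size $\pm f/K$ governed by the random intensity $K\,\mu^{\nu^K}_s$. Exponentiating $K(\langle\nu^K_T,f_T\rangle - \langle\nu^K_0,f_0\rangle - \cdots)$ and using the explicit jump structure, one checks that
\begin{equation*}
\mathcal{M}^{K,f}_t := \exp\!\left\{ K\Big( \langle \nu^K_t,f_t\rangle - \langle\nu^K_0,f_0\rangle - \int_0^t \langle m\Delta_x f_s + m|\nabla_x f_s|^2 + \partial_s f_s,\nu^K_s\rangle\,ds - \int_0^t\!\int_{\CE}\big(\psi(f)+\rho(\psi(f))\big)\,d\mu^{\nu^K}_s\,ds\Big)\right\}
\end{equation*}
is a mean-one supermartingale (a genuine martingale after the usual localization), the term $m|\nabla_x f|^2$ arising exactly from the Itô correction on the continuous part and $\rho(\psi(f)) = e^{\psi(f)} - \psi(f) - 1$ being the compensator correction for a Poisson-type jump of size $\psi(f)/K$ scaled by $K$. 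Note the mutation terms do not appear in $\mu^{\nu}$ at all: since a mutation occurs with probability $q_K p \to 0$, these events are exponentially negligible on the scale $e^{-cK}$ relative to ordinary births, and one must verify (using Assumption~\ref{ass_coef} and $q_K\to 0$) that discarding them costs nothing in the upper bound.

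The second step is the standard consequence: for fixed $f$, Chebyshev applied to $\mathcal{M}^{K,f}_T \geq 1$ on the event $\{\nu^K\in A\}$ gives
$\Pb_{\nu^K_0}(\nu^K\in A) \leq \E[\mathcal{M}^{K,f}_T]\cdot \sup_{\nu\in A}\exp\{-K\,I^{f,T}(\nu)\} = \exp\{-K\inf_{\nu\in A} I^{f,T}(\nu)\}$, hence $\frac1K\log\Pb \leq -\inf_{\nu\in A} I^{f,T}(\nu)$, and since $\nu^K_0$ ranges over $\CC\cap M_F^K$ one also takes the supremum over the (compact) set of initial conditions, producing $-\inf_{\xi\in\CC,\nu\in A,\,\nu_0=\xi} I^{f,T}(\nu)$. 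Taking the supremum over $f$ inside the infimum is the delicate order-exchange: one wants $\sup_f \inf_{\xi,\nu} \geq \inf_{\xi,\nu}\sup_f = \inf_{\xi,\nu} I^T_\xi(\nu)$, which is false in general, so instead one argues the other way — for a compact subset $\CC$ and a set $A$ with uniformly bounded mass $M$, one first proves the bound for \emph{compact} $A$ via a covering argument (cover $\CC\times\bar A$ by finitely many neighborhoods on each of which a single well-chosen $f$ nearly achieves the sup of $I^{f,T}$, then take a minimum over the finite cover), and then upgrades from compact to the stated closed/measurable $A$ using exponential tightness of $(\nu^K)$, which follows from the moment bound $\sup_K\E[\langle\nu^K_0,1\rangle^3]<\infty$ in Assumption~\ref{ass_coef} together with the uniform mass bound $M$ on $A$. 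The restriction $A\subset\{\sup_t\langle\nu_t,1\rangle\le M\}$ is exactly what makes all the constants in the supermartingale estimates (Laplace transforms of the jump intensities) uniform in $\nu\in A$.

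The \textbf{main obstacle} is controlling the mixed diffusion–jump structure rigorously: one must justify the exponential (super)martingale property despite the unbounded jump-rate contributions (births/deaths scale linearly with $N_t$), which forces a careful localization and a uniform integrability argument valid only on $\{\sup_t\langle\nu_t,1\rangle\le M\}$, and one must handle the continuous part's quadratic variation to get precisely the $m|\nabla_x f|^2$ term — this is where reflection at $\partial\CX$ matters and where the boundary condition $\partial_n f = 0$ in the definition of $C^{2,0,1}_n$ is used to kill the local-time boundary term. A secondary technical point is that $\psi(f)$ is not continuous as a function on $\CE$ (it flips sign between $\pi=1$ and $\pi=2$), so the measurable-selection / approximation arguments in the covering step must be done componentwise on the two copies of $\CX\times\CU$; and one must take care that $\rho^*$, the Legendre dual that will characterize the rate function in the subsequent analysis (and which appears in the "good rate function" form used later for the exit-time estimates), is the right dual object — but for the \emph{upper bound} alone only $\rho$ and the supermartingale are needed, so $\rho^*$ enters only in the follow-up study of $I^T_{\xi_0}$ announced after the theorem. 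Finally, throughout one verifies the bound is vacuous-safe: if $\inf_{\xi\in\CC,\nu\in\bar A} I^T_\xi(\nu) = +\infty$ (e.g. $\bar A$ contains no path with $\nu_0\in\CC$), the claim is trivial, and otherwise the above estimates apply to any $f$ and any near-minimizing path.
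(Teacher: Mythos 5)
Your plan follows essentially the same route as the paper: exponential change of measure, Chebyshev, exponential tightness (your ``Lemma~\ref{lemma_tensionexpo}''), and an abstract sup/inf exchange — the paper packages the last step by invoking Theorem~4.4.2 and Corollary~5.6.15 of Dembo--Zeitouni, which is exactly the covering-plus-tightness argument you sketch. The non-trivial thing to establish is that $H(I^{f,T}) := \limsup_K \tfrac1K\log\E[\exp(K I^{f,T}(\nu^K_{\cdot\wedge\tau^K_M}))]$ vanishes, and here your proposal has a real imprecision.

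You assert that
$\mathcal{M}^{K,f}_t = \exp\{K I^{f,t}(\nu^K)\}$
is a \emph{mean-one supermartingale}. It is not, in general, because the true exponential martingale carries an explicit mutation correction. Writing $\phi(x)=e^x-1$, the paper shows that
\begin{equation*}
\mathcal{N}_T = \mathcal{M}^{K,f}_T\,\exp\!\Big(-q_K K \int_0^{T\wedge\tau^K_M}\big\langle \nu^K_s,\, p\,b^u(x)\big(\textstyle\int_{\CU}\phi(f_s)(x,w)k(x,u,w)\,dw - \phi(f_s)(x,u)\big)\big\rangle ds\Big)
\end{equation*}
is the genuine mean-one martingale (a martingale, not merely a supermartingale, after the localization at $\tau^K_M$ renders it bounded). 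The extra exponent has no definite sign — $\phi(f)$ can be positive or negative — so $\mathcal{M}^{K,f}$ itself is neither a sub- nor a super-martingale; its expectation is only pinned between $e^{-q_K K C(\|f\|_\infty,M)}$ and $e^{+q_K K C(\|f\|_\infty,M)}$. You do flag that ``one must verify that discarding [mutations] costs nothing,'' but the mechanism you invoke — that mutations are ``exponentially negligible on the scale $e^{-cK}$'' — is not the right reason: the error is of size $e^{O(q_K K)}$, which is $e^{o(K)}$ because $q_K\to 0$, not because it is exponentially small in $K$. With that explicit correction term in place, $H(I^{f,T})=0$ follows immediately and your Chebyshev step goes through; without it, the claim that $\mathcal{M}^{K,f}$ has mean at most one is simply false, and the bound you derive from Chebyshev would not be justified. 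The rest of the sketch (the role of $m|\nabla_x f|^2$ as the Itô correction, the role of $\partial_n f=0$ in killing the boundary local-time term, the uniformity over $\CC$ via compactness, and the observation that $\rho^*$ only matters for the later non-variational representation, not for the upper bound) is accurate and matches the paper.
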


\begin{proof}
 We will show the following upper bound 
 \begin{equation}
\label{eq_GD}
\underset{K \to +\infty}{\limsup} \; \frac{1}{K} \log \Pb(\nu^K \in A) \leq - \underset{\nu \in \bar{A}}{\inf} \; I^T_{\xi_0}(\nu),
 \end{equation}
indeed \eqref{eq_GDavecsup} can be directly deduced from this bound by a similar reasoning as in the proof of Corollary 5.6.15 in Dembo and Zeitouni \cite{dembo_zeitouni_1998}.
To prove \eqref{eq_GD}, we need the exponential tightness of the process $(\nu^K_t)_{t\in [0,T]}$ which is described by the following lemma.
 \begin{lemma}
\label{lemma_tensionexpo}
Suppose that Assumption~\ref{ass_coef} holds, and that there exists $C_{init}>0$ such that $\sup_{K\in \N} \langle \nu_0^K, \one \rangle < C_{init}$ a.s.. Then for all $L>0$, there exists a compact subset $\CC_L$ of the Skorohod space $\mathbb{D}([0,T],M_F(\CX\times\CU))$ such that
 \begin{equation*}
  \underset{K \to +\infty}{\limsup} \frac{1}{K} \log\Pb(\nu^K \not\in \CC_L) \leq -L
 \end{equation*}
\end{lemma} 
\noindent
We do not detail the proof of Lemma~\ref{lemma_tensionexpo} as it may be easily adapted from \cite{dawson_gartner_1987,graham_meleard_1997,tran_2008}. 
Then, set $\tau_M^K=\inf\{t\geq 0, \langle \nu^K_{t},\one \rangle \geq M\}$. Note that Lemma~\ref{lemma_tensionexpo} is also true for $(\nu^K_{t \wedge \tau^K_M})_{t\geq 0}$. Using a proof similar to Theorem 4.4.2 of \cite{dembo_zeitouni_1998}, we deduce the inequality
\begin{align*}
 \underset{K\to +\infty}{\limsup} \frac{1}{K} \log \Pb(\nu^K \in A)&=\underset{K\to +\infty}{\limsup} \frac{1}{K} \log \Pb(\nu^K_{.\wedge \tau^K_M} \in A)\\
 & \leq -\inf_{\nu\in \bar A} \Big( \sup_{f\in C^{2,0,1}_n(\CX\times\CU\times [0,T])}  (I^{f,T}(\nu)-H(I^{f,T}))\Big),
\end{align*}
where $H(I^{f,T})=\limsup_{K\to +\infty} \frac{1}{K}\log \E[\exp(KI^{f,T}(\nu^K_{.\wedge \tau^K_M})]$. Let us show that $H(I^{f,T})=0$. Let
\begin{multline*}
 \mathcal{N}_T=\exp\bigg(K I^{f,T}(\nu^K_{.\wedge \tau^K_M})\\
 -q_K K \int_0^{T\wedge \tau^K_M} \left\langle \nu^K_s, pb^u(x)\left(\int_\CU \phi(f_s)(x,w)k(x,u,w)dw -\phi(f_s)(x,u)\right)\right\rangle ds \bigg),
\end{multline*}
where $\phi(x)=x+\rho(x)=e^x-1$. 
Itô's Formula implies that $(\mathcal{N}_T,T\geq 0)$ is a local martingale. The definition of $\tau^K_M$ implies that $\mathcal{N}_T$ is bounded. So it is a martingale of mean $1$ and there exists a constant $C(\|f\|_{\infty},M)>0$ such that
$$
 \exp(-q_K K C(\|f\|_{\infty},M)) \leq \E\left[\exp\left(K I^{f,T}(\nu^K_{.\wedge \tau^K_M})\right) \right] \leq \exp(q_K K C(\|f\|_{\infty},M)).
$$
We conclude easily, since $q_K$ tends to $0$ when $K\to +\infty$.
\end{proof}

\noindent
The aim is now to write the rate function under a non-variational integral formulation which is more workable than that of \eqref{def:Inu}. Firstly, this integral formulation is convenient to use Chasles' Theorem. Secondly, it will be used to bound from above the distance between a solution to \eqref{eq_theoapproxpop} and any $\nu$, this upper bound is proved below in Proposition \ref{prop_lowerboundI}. Those two points are required to prove the results about the exit time in Section~\ref{sec_lowerbound}.\\
Before writing the non-variational formulation, let us define two functional spaces.
\begin{itemize}
 \item The Orlicz space associated with $\rho^*$ is $L_{\rho^*,T}$ the set of all bounded and measurable functions $h$ on $\CE\times [0,T]$ such that
\begin{equation}
\label{def:normrho}
 \|h\|_{\rho^*,T}:=\inf \left\{ \alpha>0, \int_{\CE\times [0,T]} \rho^*\left(  \dfrac{|h|}{\alpha} \right) d\mu^{\nu}_s ds\leq 1  \right\}<+\infty.
\end{equation}
The Orlicz space associated with $\rho$ is defined on the same way.
\item $\LL^{2}_T$ is the set of functions $h\in L^2(\CX\times \CU\times [0,T], \R^d)$ such that
\begin{equation}
\label{def:normL2}
 \|h\|_{\LL^2,T}:=\left( \int_0^T 2\langle \nu_s,m |h_s|^2  \rangle ds \right)^{1/2}<\infty.
\end{equation}

\end{itemize}

\begin{theorem}
 \label{theo_functionalrate}
 Suppose that Assumption~\ref{ass_coef} holds. Let $T>0$ and $\nu\in \D([0,T],M_F(\CX \times \CU))$, such that $I^T_{\nu_0}(\nu)<+\infty$, then there exist two measurable functions $(h^\nu_1,h^\nu_2)\in L_{\rho^*,T} \times \LL^2_T$ such that for all $f\in C^{2,0,1}_n(\CX \times \CU\times [0,T])$,
\begin{multline}
\label{eq_theoeqnu}
 \langle \nu_t, f_t \rangle =\langle \nu_0, f_0 \rangle +\int_0^T \int_{\CE} (1+h^\nu_1(x,u,s,\pi))\psi(f)(x,u,s,\pi) d\mu^{\nu}_s ds \\ +\int_0^T \int_{\CX\times \CU} \Big( m^u \Delta_x f_s(x,u)+ 2m^u h^\nu_2(x,u,s) \cdot\nabla_xf_s(x,u) 
  + \dfrac{\partial f_s}{\partial s}(x,u) \Big)\nu_s(dx,du),
\end{multline}
and the rate function can be written as follows
\begin{equation}
\label{eq_theorate}
 I^T_{\nu_0}(\nu)=\int_0^T \int_\CE \rho^*(h^\nu_1) d\mu^{\nu}_s ds + \int_0^T  m \langle \nu_s,|h^\nu_2|^2 \rangle ds<+\infty.
\end{equation}
\end{theorem}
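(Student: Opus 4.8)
The strategy is the classical one for rewriting a variational Legendre-type rate function as an explicit integral functional: identify the "perturbation" of the dynamics encoded in $I^T_{\nu_0}(\nu)<+\infty$, show it is represented by a pair of density functions $(h_1^\nu,h_2^\nu)$, and then check that on the support of $\mu^\nu$ (resp.\ of $\nu_s\,ds$) the supremum over test functions $f$ is attained pointwise by the Legendre duality $\rho\leftrightarrow\rho^*$ and by the quadratic duality for the gradient term. Concretely, I would proceed as follows.

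\emph{Step 1: linearise the functional $f\mapsto I^{f,T}(\nu)$.} Since $I^T_{\nu_0}(\nu)<+\infty$, the map $f\mapsto I^{f,T}(\nu)$ is bounded above on the linear space $C^{2,0,1}_n(\CX\times\CU\times[0,T])$. Split $I^{f,T}$ into its part that is linear in $f$ (the terms $\langle\nu_T,f_T\rangle-\langle\nu_0,f_0\rangle-\int\langle m\Delta_xf_s+\partial_sf_s,\nu_s\rangle\,ds-\int_\CE\psi(f)\,d\mu^\nu_s\,ds$), which I call $L(f)$, and the remaining nonlinear part $-\int_\CE \rho(\psi(f))\,d\mu^\nu_s\,ds-\int_0^T\langle m|\nabla_xf_s|^2,\nu_s\rangle\,ds =: -R(f)$. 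Replacing $f$ by $\lambda f$ and letting $\lambda\to 0$ shows that $L$ must vanish on the subspace where the derivative of $R$ vanishes; more usefully, finiteness of $\sup_f (L(f)-R(f))$ forces $L$ to be continuous with respect to the "norm'' controlled by $R$, i.e.\ $|L(f)|$ is dominated by the Orlicz-type norm $\|\psi(f)\|_{\rho,T}$ of $\psi(f)$ and the $L^2$-norm $\|\nabla_x f\|_{\LL^2,T}$. This is where one invokes the duality structure of $\rho$: bounding $\sup_{\lambda}(\lambda L(f)-R(\lambda f))$ from below by optimising $\lambda$ gives a quantitative inequality of Young type between $L(f)$ and $R(f)$.

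\emph{Step 2: represent $L$ by densities via Riesz/Orlicz duality.} From the continuity estimate of Step~1, $L$ extends to a continuous linear functional on (the closure of) $\{\psi(f)\}$ inside $L_{\rho,T}$ and on $\{\nabla_xf\}$ inside $\LL^2_T$. The dual of the Orlicz space $L_{\rho,T}$ with respect to the reference measure $\mu^\nu_s\,ds$ is (identified with) $L_{\rho^*,T}$, so there is $h_1^\nu\in L_{\rho^*,T}$ with the $\psi(f)$-part of $L(f)$ equal to $\int_\CE h_1^\nu\,\psi(f)\,d\mu^\nu_s\,ds$; similarly the Hilbert space $\LL^2_T$ is self-dual so the gradient part is $\int_0^T 2\langle\nu_s, m\, h_2^\nu\cdot\nabla_xf_s\rangle\,ds$ for some $h_2^\nu\in\LL^2_T$. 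Substituting these representations back into $L(f)=0$ decomposition of $I^{f,T}$ and rearranging yields exactly the weak equation \eqref{eq_theoeqnu} with the drift modified by $(1+h_1^\nu)$ on the jump part and by the extra gradient drift $2m^uh_2^\nu$. Some care is needed: the linear functional is defined on $f$, not on $(\psi(f),\nabla_xf)$ separately, so one must check the map $f\mapsto(\psi(f),\nabla_xf)$ is injective enough that the two densities are well defined and that no "boundary at $t=0,T$'' term is lost — here the terminal/initial pairings $\langle\nu_T,f_T\rangle,\langle\nu_0,f_0\rangle$ and the integration by parts in $t$ (using $f\in C^{\cdot,\cdot,1}$) account for everything, and the Neumann condition $\partial_nf=0$ removes the boundary flux from $m\Delta_x$.

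\emph{Step 3: identify the value of the rate function.} Once \eqref{eq_theoeqnu} holds, plug a test function $f$ into $I^{f,T}(\nu)$ and use \eqref{eq_theoeqnu} to eliminate $\langle\nu_T,f_T\rangle-\langle\nu_0,f_0\rangle$; the drift terms cancel and one is left with
\[
I^{f,T}(\nu)=\int_0^T\!\!\int_\CE\!\big(h_1^\nu\psi(f)-\rho(\psi(f))\big)d\mu^\nu_s ds+\int_0^T\!\!\big(2m\langle\nu_s,h_2^\nu\!\cdot\!\nabla_xf_s\rangle-m\langle\nu_s,|\nabla_xf_s|^2\rangle\big)ds.
\]
Taking the supremum over $f$: pointwise in $(x,u,s,\pi)$ the first integrand is maximised, for a free real parameter $\psi(f)$, at the value $\rho^*(h_1^\nu)$ by definition of the Legendre transform; the second is maximised, for a free $\R^d$-valued $\nabla_xf$, at $m|h_2^\nu|^2$. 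To turn these pointwise optima into an attained supremum over the constrained class $C^{2,0,1}_n$, I would use a density argument: approximate the (measurable) pointwise-optimal choices of $\psi(f)$ and $\nabla_xf$ by smooth functions satisfying the Neumann condition, controlling the error via the finiteness $\int\rho^*(h_1^\nu)d\mu^\nu<\infty$ and $\int m\langle\nu_s,|h_2^\nu|^2\rangle<\infty$ (which themselves follow from $h_1^\nu\in L_{\rho^*,T}$, $h_2^\nu\in\LL^2_T$ and $\mu^\nu$ being a finite measure since $\nu$ has bounded mass on $A$-type sets — one should note the mass bound is automatic wherever $I^T<\infty$). This gives $I^T_{\nu_0}(\nu)=\int_0^T\int_\CE\rho^*(h_1^\nu)\,d\mu^\nu_s\,ds+\int_0^T m\langle\nu_s,|h_2^\nu|^2\rangle\,ds$, and finiteness of the left side gives finiteness of the right side, i.e.\ \eqref{eq_theorate}.

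\emph{Main obstacle.} The delicate point is Step~2: making the Orlicz-duality representation of the linear functional $L$ rigorous. One must verify that finiteness of $I^T_{\nu_0}(\nu)$ really yields continuity of $L$ in the Orlicz norm $\|\cdot\|_{\rho,T}$ (not merely in some weaker sense), handle the fact that $\rho$ and $\rho^*$ are not $\Delta_2$ at infinity so $L_{\rho,T}^*$ is not all of $L_{\rho^*,T}$ and vice versa — one typically works with the closure of bounded functions (the Orlicz "heart'') where the duality is clean, which matches the definition given in the excerpt that $L_{\rho^*,T}$ consists of \emph{bounded} measurable functions. Coupling this with the simultaneous extraction of $h_2^\nu$ from the same functional $f$, and the approximation in Step~3 respecting the Neumann boundary condition, is where essentially all the real work lies; the algebraic manipulations in Steps~1 and~3 are routine once the functional-analytic setup is in place. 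I expect the proof to mirror the corresponding arguments in L\'eonard~\cite{leonard_1995} and Tran~\cite{tran_2008}, adapted to include the reflected-diffusion gradient term.
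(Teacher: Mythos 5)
Your proposal follows essentially the same route as the paper: recognize $I^{T}_{\nu_0}(\nu)$ as the Legendre transform of the convex functional $\Gamma(g_1,g_2)=\int_0^T\!\int_\CE\rho(g_1)\,d\mu^\nu_s ds+\int_0^T\langle\nu_s,m|g_2|^2\rangle ds$ evaluated at the linear form $L$, extend to the product space $L_{\rho,T}\times\LL^2_T$ to represent $L$ by densities $(h_1^\nu,h_2^\nu)$ via Orlicz and Hilbert-space duality (treating the jump and diffusive parts separately, as in L\'eonard for the former and Dawson--G\"artner/Fontbona for the latter), and then recover \eqref{eq_theorate} by pointwise Legendre optimization. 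The only point your sketch glosses over that the paper flags explicitly is the case where the linear form $l_\nu$ lies on the boundary of $\mathrm{dom}\,\Gamma^*$, for which the paper invokes a separate continuity argument from L\'eonard's Theorem~7.1; your remarks on the Orlicz heart and approximation hint at this but do not isolate it as a distinct case.
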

The proof of Theorem~\ref{theo_functionalrate} uses convex analysis arguments which can be adapted from Leonard \cite{leonard_2001, leonard_2001_bis}. We do not detail its proof but we give the main ideas. For all $\nu\in \D([0,T],M_F(\CX \times \CU))$, $I^T(\nu)$ is equal to the Legendre transform $\Gamma^*$ of
\begin{equation}
 \Gamma : \left(\begin{matrix}
  (\psi,\nabla_x)(C^{2,0,1}(\CX\times \CU\times [0,T])) & \rightarrow & \R \\
  (g_1,g_2) & \mapsto &  \int_0^T \int_{\CE} \rho(g_1)d\mu^\nu_sds + \int_0^T \langle \nu_s, m |g_2 |^2 \rangle ds\\
\end{matrix}
\right) .
\end{equation}
at a well chosen point $l_\nu$. If $l_\nu$ belongs to the interior of the set $dom \Gamma^*$ of linear maps $l$ with $\Gamma^*(l)<+\infty$, we can exhibit $l_\nu$ by means of the derivative of $\bar\Gamma$, the Legendre biconjugate of $\Gamma$. Studying directly $\bar\Gamma$ is difficult. The key point is thus to work on the product space $L_{\rho,T}\times \LL^2_T$. In this way, we can study the Legendre biconjugate of an extension of $\Gamma$ on that space, in order to deal with the diffusive part and the jumps part separately. The diffusive part is treated using ideas of Dawson and Gartner \cite{dawson_gartner_1987} and Fontbona \cite{fontbona_2004} whereas the jumps part is treated using ideas of Leonard \cite{leonard_2001, leonard_2001_bis}. The next step is to deduce the Legendre biconjugate of $\Gamma$ by restricting the definition domain. Finally, to deal with points $\nu$ for which $l_\nu$ does not belong to the interior of $dom \Gamma^*$, we use a continuity argument similar to that of Theorem 7.1's proof in~\cite{leonard_1995}.\\

\indent The last result of this part gives an upper bound on the distance between a solution to \eqref{eq_theoapproxpop} and any $\nu$, this bound is used in Subsection~\ref{subsec:exittime}.
\begin{proposition}
 \label{prop_lowerboundI}
Let $T>0$ and $M>0$. There exists $C(T,M)$ such that, for any $\nu$ satisfying $\sup_{t\leq T} \langle \nu_t, \one \rangle <M$ and for all $(\xi_t)_{t\geq 0}$ solution to \eqref{eq_theoapproxpop} with the initial condition $\xi_0=\nu_0$, 
 \begin{equation*}
 \sup_{t\in [0,T]} \W(\nu_t,\xi_t) \leq C(T,M) \left(I^{T}(\nu)+\sqrt{I^{T}(\nu)}\right).
\end{equation*}  
 \end{proposition}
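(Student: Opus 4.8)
The plan is to use the non-variational representation of $\nu$ given by Theorem~\ref{theo_functionalrate} and compare it term-by-term with the equation~\eqref{eq_theoapproxpop} satisfied by $\xi$, controlling the discrepancy by a Gronwall-type argument in the Kantorovich-Rubinstein distance. First I would fix $f\in C^{Lip}(\CX\times\CU)$ (extended trivially in time) and subtract \eqref{eq_theoapproxpop} from \eqref{eq_theoeqnu}; this produces $\langle\nu_t-\xi_t,f\rangle$ on the left, a "drift difference" involving $b,d$ and the competition kernels evaluated against $\nu_s$ versus $\xi_s$ (which is Lipschitz in the $\W$-distance because $b,d,c$ are Lipschitz and bounded, by Assumption~\ref{ass_coef}), plus two "error terms": the jump perturbation $\int_0^t\int_\CE h^\nu_1\,\psi(f)\,d\mu^\nu_s\,ds$ and the diffusion perturbation $\int_0^t\langle\nu_s,2m\,h^\nu_2\cdot\nabla_xf\rangle\,ds$. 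The point is that $f$ ranges over $C^{Lip}$, so $\|f\|_\infty\le 1$ and $|\nabla_xf|\le 1$; taking the supremum over $f$ converts the left side into $\W(\nu_t,\xi_t)$ and bounds each error term by a quantity depending only on $h^\nu_1,h^\nu_2$ and the total mass bound $M$.

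The key step is to estimate those two error terms by $I^T(\nu)+\sqrt{I^T(\nu)}$. For the diffusion term, by Cauchy-Schwarz and $|\nabla_xf|\le1$,
\begin{equation*}
\left|\int_0^t\langle\nu_s,2m\,h^\nu_2\cdot\nabla_xf\rangle\,ds\right|
\le \left(\int_0^T 2\langle\nu_s,m|h^\nu_2|^2\rangle\,ds\right)^{1/2}\left(\int_0^T 2\langle\nu_s,m\rangle\,ds\right)^{1/2}
\le C(T,M)\sqrt{I^T(\nu)},
\end{equation*}
using $\bar m$ and $\langle\nu_s,\one\rangle<M$. For the jump term, one splits $\{|h^\nu_1|\le1\}$ and $\{|h^\nu_1|>1\}$: on the first set $\rho^*(y)\ge c\,y^2$ gives an $L^2$-bound and hence an $O(\sqrt{I^T(\nu)})$ contribution after Cauchy-Schwarz against $\mu^\nu$ (whose total mass is $O(T M(\bar b+\bar d+\bar c M))$); on the second set one uses $|y|\le \rho^*(y)$ for $|y|$ large, which directly yields an $O(I^T(\nu))$ contribution. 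Here I would invoke~\eqref{eq_theorate}, which guarantees $\int_0^T\int_\CE\rho^*(h^\nu_1)\,d\mu^\nu_s\,ds\le I^T(\nu)$ and $\int_0^T m\langle\nu_s,|h^\nu_2|^2\rangle\,ds\le I^T(\nu)$.

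Collecting everything, one arrives at an inequality of the form
\begin{equation*}
\W(\nu_t,\xi_t)\le C(T,M)\big(I^T(\nu)+\sqrt{I^T(\nu)}\big)+L(T,M)\int_0^t\W(\nu_s,\xi_s)\,ds,
\end{equation*}
where $L(T,M)$ comes from the Lipschitz dependence of the drift on the measure (again via Assumption~\ref{ass_coef} and the mass bound $M$, which applies to $\xi$ as well since $\langle\xi_t,\one\rangle$ stays bounded on $[0,T]$). Gronwall's lemma then gives $\sup_{t\le T}\W(\nu_t,\xi_t)\le C(T,M)e^{L(T,M)T}\big(I^T(\nu)+\sqrt{I^T(\nu)}\big)$, which is the claim after renaming the constant. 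The main obstacle I anticipate is the careful bookkeeping in the jump term: one must handle the region where $h^\nu_1$ is close to $-1$ (where $\rho^*$ blows up) and the large-$|h^\nu_1|$ region separately, and check that the superlinear growth of $\rho^*$ indeed dominates both $|y|$ and $y^2$ in the appropriate ranges so that no term worse than linear in $I^T(\nu)$ survives; the diffusion term and the Gronwall closing are routine by comparison.
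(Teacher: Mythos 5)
Your overall architecture is sound, and the two "error term" estimates (Cauchy–Schwarz for $h^\nu_2$, a split at $|h^\nu_1|=1$ for the jump part) are a legitimate alternative to the paper's Orlicz–H\"older inequality and produce the right $I^T+\sqrt{I^T}$ scaling. But there is a genuine gap at the "subtract \eqref{eq_theoapproxpop} from \eqref{eq_theoeqnu} with a fixed Lipschitz test function $f$" step, and it is exactly the step the paper works hardest to circumvent.

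Both \eqref{eq_theoapproxpop} and \eqref{eq_theoeqnu} contain the term $\int_0^t\langle\nu_s,m^u\Delta_x f_s\rangle\,ds$ (resp.\ with $\xi_s$). First, these require $f\in C^{2,0,1}_n$; a generic $f\in C^{Lip}$ has no Laplacian, so the subtraction you describe is not even well-posed. Second, and more seriously, even if you mollify $f$ to make it $C^2$, the subtraction leaves an uncancelled term $\int_0^t\langle\nu_s-\xi_s,\,m^u\Delta_x f_s\rangle\,ds$. This is a second-order quantity and cannot be dominated by $\int_0^t\W(\nu_s,\xi_s)\,ds$: the function $\Delta_x f$ has no reason to lie in $C^{Lip}$ with a controlled constant, so Gronwall does not close. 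Your proof plan silently drops this term — it only lists the drift difference, the $h^\nu_1$ jump perturbation, and the $h^\nu_2\cdot\nabla_x f$ perturbation — but that is precisely the term that makes the naive weak-formulation comparison fail for measure-valued diffusion limits.

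The paper's fix is to replace the weak formulation by the \emph{mild} formulation: test $\nu_t-\xi_t$ not against $f$ but against the semigroup image $P^u_{t-s}f$, so the Laplacian is absorbed into $P^u$ and disappears from the Duhamel identity. The price is that one must know $P^u_t$ maps $C^{Lip}$ to $C^{Lip}$ with constant $1$, which is nontrivial for a reflected diffusion and is exactly the content of Lemma~\ref{lemma_lipschitz} (requiring convexity of $\CX$). Once that lemma is in hand, the test functions $P^._{t-s}f$ remain admissible uniformly in $s,t$, the drift comparison and the two error estimates proceed essentially as you describe, and Gronwall closes. So your estimates for the perturbation terms are fine and roughly equivalent to the paper's, but you need to replace the direct subtraction of weak equations by the mild/Duhamel comparison via $P^u$, and invoke the Lipschitz-preservation of the reflected heat semigroup, before the Gronwall argument is valid.
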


\begin{proof}
Let $\nu$ be such that $\langle\nu_t,\one \rangle <M$ for all $t\in [0,T]$. If $I^T(\nu)=0$, i.e. $\nu_t=\xi_t$, or if $I^{T}(\nu)=+\infty$, the result is obvious, so let us assume that $0<I^{T}(\nu)<+\infty$. Theorem~\ref{theo_functionalrate} implies the existence of $(h_1,h_2)\in L_{\rho^*,T} \times \LL^2_T$ such that
\begin{equation}
\label{eq_nonvar}
I^{T}(\nu)=\int_0^{T} \int_\CE \rho^*(h_1)d\mu^\nu_s ds +\int_0^{T} m\langle \nu_s,|h_2|^2 \rangle ds.
\end{equation}
We easily deduce that for any $t\leq T$,
\begin{equation}
\label{eq_minLL2}
 \|h_2\|_{\LL^2,t}^2 \leq \|h_2\|_{\LL^2,T}^2 \leq 2I^{T}(\nu). 
\end{equation}
Let us also find an upper bound on $\|h_1\|_{\rho^*,T}$. Note that for all $x\in \R$,
\begin{equation}
\label{eq_minorationrho*}
 \left\{\begin{aligned}
         &\text{if } \alpha \geq 1, && \rho^*(|x|/\alpha)\leq \rho^*(|x|)/\alpha \leq \rho^*(x)/\alpha,\\
	  & \text{if } 0< \alpha \leq 1, && \rho^*(|x|/\alpha)\leq \rho^*(|x|)/\alpha^2 \leq \rho^*(x)/\alpha^2.
        \end{aligned}
\right.
\end{equation}
Moreover, the non-variational formulation \eqref{eq_nonvar} implies that $I^T(\nu)\geq \int_0^{T} \int_\CE \rho^*(h_1)d\mu^\nu_s ds$. Thus, using
\eqref{eq_minorationrho*} and the definition of the norm $\|.\|_{\rho^*,T}$ in \eqref{def:normrho}, we obtain that
 if $I^T(\nu) \geq 1$, $ \int_0^{T} \int_\CE \rho^*\left({|h_1|}/{I^T(\nu)}\right)d\mu^\nu_s ds\leq 1$, i.e. $ \|h_1\|_{\rho^*,T} \leq I^T(\nu)$, and 
	 if $ I^T(\nu) \leq 1$, $\int_0^{T} \int_\CE \rho^*\left({|h_1|}/{\sqrt{I^T(\nu)}}\right)d\mu^\nu_s ds\leq 1$, i.e. $ \|h_1\|_{\rho^*,T} \leq \sqrt{I^T(\nu)}$.
Thus, for any $t\leq T$,
\begin{equation}
\label{eq_minrho*}
 \|h_1\|_{\rho^*,t} \leq  \|h_1\|_{\rho^*,T} \leq\left(I^{T}(\nu) + \sqrt{I^{T}(\nu)} \right) .
\end{equation}
Let $(\xi_t)_{t\geq 0}$ be the solution to \eqref{eq_theoapproxpop} with initial condition $\nu_0$. We want now evaluate $\W(\nu_t,\xi_t)$. Let us denote the semigroup of the reflected diffusion process which is the solution to \eqref{eq_brownien} with the initial condition $x$ and the diffusion coefficient $m^u$ by $(P_t^u)_{t\geq 0}$. Using Theorem~\ref{theo_functionalrate}, we find the following mild formulation for $(\nu_t)_{t\geq 0}$ in a similar way to Lemma 4.5 in \cite{champagnat_meleard_2007}~: for all $f \in C^{Lip}(\CX\times\CU)$,
\begin{equation}
 \langle \nu_t,f\rangle =  \int_0^t \int_\CE \psi(P^._{t-s}f)d\mu^\nu_sds +\int_0^t\int_\CE \psi(P^._{t-s}f) h_1 d\mu^\nu_sds +\int_0^t \langle \nu_s, 2 m \nabla_x P^._{t-s}f \cdot h_2 \rangle ds.
\end{equation}
In addition with a mild equation for $(\xi_t)_{t\geq 0}$, we deduce that for all $f \in C^{Lip}(\CX\times\CU)$ and for all $t\leq T$,
\begin{equation}
\label{eq_computing}
\begin{aligned}
 |\langle \nu_t-\xi_t,f\rangle| =& \bigg| \int_0^t \int_\CE \psi(P^._{t-s}f)(d\mu^\nu_sds-d\mu^\xi_sds) +\int_0^t\int_\CE \psi(P^._{t-s}f) h_1 d\mu^\nu_sds \\
&+\int_0^t \langle \nu_s, 2 m \nabla_x P^._{t-s}f \cdot h_2 \rangle ds \bigg|\\
\leq & C_1 \int_0^t \sup_{r\in [0,s]} \W(\nu_r,\xi_r)ds+ \|\psi(P^._{t-.}f)\|_{\rho,t} \|h_1\|_{\rho^*,t} +\|\nabla_xP_{t-.}^.f\|_{\LL^2,t} \|h_2\|_{\LL^2,t}.
\end{aligned}
\end{equation}
The second line is a consequence of Hölder's inequality (see for example Theorem 6 of Chapter 1 in \cite{rao_ren_2002} about Hölder's inequalities in Orlicz spaces).\\
Furthermore, the following Lemma insures that $P^u_tf \in C^{Lip}(\CX)$.
\begin{lemma}[Part 2 of \cite{wang_yan_2013}]
\label{lemma_lipschitz}
As $\CX$ is a convex set in $\R^d$, for all $f\in C^{Lip}(\CX)$, $u\in \CU$, and $t\in \R^+$, $P^u_tf \in C^{Lip}(\CX)$.
\end{lemma}
Let us now find an upper bound on $\|\psi(P_{t-.}^.f)\|_{\rho,t}$. $f$ belongs to $ C^{Lip}(\CX\times\CU)$ and $\sup_{t\in[0,T]}\langle \nu_t,\one\rangle\leq M$. So, for all $\alpha>0$, $t\leq T$
\begin{equation*}
\left| \int_0^{t}\int_{\CE}\rho\left(\frac{ |\psi(P^._{t-s}f)|}{\alpha}\right)d\mu^\nu_sds\right| \leq \left| \int_0^{T}\int_{\CE}\rho\left(\frac{ 1}{\alpha}\right)d\mu^\nu_sds\right|\leq T M [\bar b +\bar d +\bar c M] \rho\left( \frac{1}{\alpha} \right),
\end{equation*}
so, for any $t\leq T$,
\begin{equation*}
 \|\psi(P_{t-.}^. f)\|_{\rho,t} \leq \left[ (\rho_{|\R^+})^{-1} \left( \frac{1}{T M [\bar b +\bar d +\bar c M]}\right) \right]^{-1}:= C_2.
\end{equation*}
Furthermore, Lemma~\ref{lemma_lipschitz}
implies also that for any $t\leq T$,
\begin{equation*}
 \|\nabla_xP_{t-.}^.f\|_{\LL^2,t}^2 =\int_0^{t} 2\langle \nu_s, m |\nabla_x P^._{t-s}f|^2 \rangle ds \leq 2 \bar m M t \leq 2\bar m M T := C_3.
\end{equation*}
Using the last two inequalities with \eqref{eq_minLL2}, \eqref{eq_minrho*} and \eqref{eq_computing}, we find
\begin{equation*}
 \sup_{r\in [0,T]}\W(\nu_r,\xi_r) \leq C_1 \int_0^T \sup_{r\in [0,s]} \W(\nu_r,\xi_r)ds+ C_2 \left(I^{T}(\nu) + \sqrt{I^{T}(\nu)} \right) +  \sqrt{2 C_3 I^{T}(\nu)}.
\end{equation*}
We use Gronwall's Lemma to conclude.
\end{proof}

\section{Lower bound on the exit time of a neighborhood of the stationary state}
\label{sec_lowerbound}

\indent In this section, we assume that initially, two traits $u$ and $v$ are involved. The stochastic process starts in a state $\nu^K_0= \nu^{K,u}_0+\nu^{K,v}_0$ such that $\nu^{K,u}_0$ is close to $\bar\xi^u$ and there exist only a few individuals with trait $v$. Since the considered initial state is close to the equilibrium $(\bar\xi^u,0)$ and according to Theorem \ref{theo_limitpop}, the dynamics of the stochastic process $\nu^K$ is close to the equilibrium $(\bar\xi^u,0)$ on a finite interval time when $K$ is large. Our aim is to control the exit time of the stochastic process $\nu^{K,u}_t$ from a neighborhood of the stationary solution $\bar\xi^u$ in $M_F(\CX)$ when $K$ is large and $q_K$ is small. We define the exit time by~:
\begin{equation}
\label{def_RKgamma}
\text{for all } \gamma>0, R^K_{\gamma}=\inf\{t\geq 0, \W(\nu^{K,u}_t,\bar{\xi}^u)\geq \gamma)  \}.
\end{equation} 
Theorem~\ref{theo_exittimeintro} gives a lower bound on $R^K_\gamma$. The lower bound involves the first time when a new mutation occurs and the first time when the $v$-population size is larger than a threshold:
\begin{align}
\label{def_S1}
 &S_1^K=\inf\{t\geq 0, \exists w \not\in \{u,v\}, \nu^K_t(\CX \times\{w\}) \neq 0  \},\\
 \label{def_TKepsilon}
 &\text{ for all } \epsilon>0, T^K_\epsilon=\inf\{ t\geq 0, \langle\nu^{K,v}_t, \one \rangle \geq \epsilon \}.
\end{align}

\begin{theorem}
\label{theo_exittimeintro}
Suppose that Assumption~\ref{ass_coef} holds and that $H^u>0$. Let $\gamma>0$ such that $ \gamma < {H^u}{ (\kappa^{uu})}^{-1}$, and if $H^u\kappa^{vv}-H^v\kappa^{uv}>0$, $\gamma$ satisfies also the assumption $\gamma< |\frac{H^u}{\kappa^{uu}}-\frac{H^u\kappa^{vv}-H^v\kappa^{uv}}{\kappa^{uu}\kappa^{vv}-\kappa^{vu}\kappa^{uv}}|$ .
Then, there exist $\gamma'>0$, $\epsilon>0$, and $V>0$ such that, if $\nu^K_0=\nu^{K,u}_0+\nu^{K,v}_0$ with $\W(\nu^{K,u}_0,\bar\xi^u)<\gamma'$ and $\langle\nu^{K,v}_0,\one\rangle < \epsilon$, then
\begin{equation*}
\underset{K\to +\infty}{\lim} \; \Pb_{\nu^K_0}(R^K_{\gamma}>e^{KV} \wedge T^K_\epsilon \wedge S_1^K)=1.
\end{equation*}
\end{theorem}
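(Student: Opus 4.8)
The plan is to run the classical Freidlin--Wentzell argument for exit from a stable region, in the spirit of the proof of Theorem~1 in \cite{champagnat_2006}, but using Proposition~\ref{prop_lowerboundI} to control directly the cost of deviating from the deterministic flow \eqref{eq_theoapproxpop}, rather than comparing the resident population with an auxiliary isolated monomorphic one.

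\emph{Step 1 (deterministic return to $\bar\xi^u$).} Before $S_1^K$ only the traits $u$ and $v$ are present, so the relevant deterministic object is \eqref{eq_theoapproxpop} restricted to $\CX\times\{u,v\}$, whose solutions I denote $(\xi^u_t,\xi^v_t)$. Since $\one\in C^{Lip}(\CX)$, the ball $B(\bar\xi^u,\gamma)\subset M_F(\CX)$ pins the total mass of any measure it contains to within $\gamma$ of $\langle\bar\xi^u,\one\rangle=H^u/\kappa^{uu}$, and the two hypotheses on $\gamma$ say exactly that $B(\bar\xi^u,\gamma)$ excludes the zero measure and (when it exists) the $u$-marginal of the coexistence equilibrium of \cite{leman_mirrahimi_meleard_2014}, whose total mass is $(H^u\kappa^{vv}-H^v\kappa^{uv})/(\kappa^{uu}\kappa^{vv}-\kappa^{vu}\kappa^{uv})$; in particular $B(\bar\xi^u,\gamma)$ contains no equilibrium $u$-profile of the dimorphic system other than $\bar\xi^u$. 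Combining this with the stability and global attractivity of $\bar\xi^u$ for the monomorphic flow ($H^u>0$, \cite{leman_mirrahimi_meleard_2014}), with the fact that a $v$-population of total mass $\le\epsilon$ perturbs the $u$-equation only through the bounded term $c^{uv}\cdot\xi^v\le\bar c\,\epsilon$, and with the crude estimate $\langle\xi^v_t,\one\rangle\le e^{\bar b\,T_0}\langle\xi^v_0,\one\rangle$ valid on any fixed window, I would produce constants $T_0>0$, $0<\gamma'<\gamma$, $\delta\in(0,\gamma')$ and $\epsilon>0$ such that every solution of \eqref{eq_theoapproxpop} on $[0,T_0]$ with $\W(\xi^u_0,\bar\xi^u)\le\gamma'$ and $\langle\xi^v_0,\one\rangle\le\epsilon$ satisfies $\langle\xi^v_t,\one\rangle<\epsilon/2$ and $\W(\xi^u_t,\bar\xi^u)<\gamma-\delta$ for all $t\in[0,T_0]$, and $\W(\xi^u_{T_0},\bar\xi^u)<\gamma'/2$. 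Proving these estimates uniformly over all admissible initial data, in particular when the $v$-component is in an invasion regime and grows over $[0,T_0]$, is where the spatially structured, infinite-dimensional nature of the problem truly intervenes, and I expect it to be the main obstacle.

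\emph{Step 2 (cost of one bad excursion).} Let $\CC:=\{\mu^u+\mu^v\in M_F(\CX\times\{u,v\}):\W(\mu^u,\bar\xi^u)\le\gamma',\ \langle\mu^v,\one\rangle\le\epsilon\}$, a compact set of total mass $\le M$, fix $M'>M$ and a small $\rho_0\in(0,\delta)$, and let $A$ be the set of $\nu\in\D([0,T_0],M_F(\CX\times\{u,v\}))$ with $\sup_{t\le T_0}\langle\nu_t,\one\rangle\le M'$ and $\sup_{t\le T_0}\W(\nu_t,\xi_t)\ge\rho_0$, where $(\xi_t)$ solves \eqref{eq_theoapproxpop} with $\xi_0=\nu_0$. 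For $\nu\in\bar A$ with $\nu_0\in\CC$, Proposition~\ref{prop_lowerboundI} (horizon $T_0$, mass bound $M'$) forces $I^{T_0}_{\nu_0}(\nu)\ge\eta$ for some $\eta=\eta(T_0,M',\rho_0)>0$, so Theorem~\ref{theo_majoration} applied on the compact $\CC$ gives $\sup_{\mu\in\CC}\Pb_\mu(\nu^K\in A)\le e^{-K\eta/2}$ for $K$ large. Adding the exponential mass-tail bound underlying Lemma~\ref{lemma_tensionexpo}, which estimates $\Pb_\mu(\sup_{t\le T_0}\langle\nu^K_t,\one\rangle>M')$ by $e^{-KL}$ with $L$ as large as we wish for $M'$ large, we obtain $\sup_{\mu\in\CC}\Pb_\mu\big(\sup_{t\le T_0}\W(\nu^K_t,\xi_t)\ge\rho_0\big)\le e^{-K\eta/2}$ for all $K$ large enough.

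\emph{Step 3 (iteration and conclusion).} Write $\mathcal T_K:=e^{KV}\wedge T^K_\epsilon\wedge S^K_1$ and $N_K:=\lceil e^{KV}/T_0\rceil$, and for $1\le k\le N_K$ let $G_k:=\{\sup_{t\in[(k-1)T_0,kT_0]}\W(\nu^K_t,\zeta^{(k)}_t)<\rho_0\}$, where $\zeta^{(k)}$ solves \eqref{eq_theoapproxpop} with $\zeta^{(k)}_{(k-1)T_0}=\nu^K_{(k-1)T_0}$ and $\rho_0$ is chosen small enough (in terms of $\gamma,\gamma',\delta$) that on $G_k$ the $u$-marginal of $\nu^K$ stays within $\delta$ of that of $\zeta^{(k)}$ and within $\gamma'$ of $\bar\xi^u$ at time $kT_0$. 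Put $E_k:=\{\mathcal T_K>kT_0\}\cap\bigcap_{j=1}^{k}G_j$. By induction on $k$ — using $\mathcal T_K>kT_0\Rightarrow T^K_\epsilon>kT_0\Rightarrow\langle\nu^{K,v}_{kT_0},\one\rangle<\epsilon$ for the $v$-part, and Step~1 applied to $\zeta^{(k)}$ together with $G_k$ for the $u$-part, the base case being the hypotheses $\W(\nu^{K,u}_0,\bar\xi^u)<\gamma'$, $\langle\nu^{K,v}_0,\one\rangle<\epsilon$ — one gets $E_k\subset\{\nu^K_{kT_0}\in\CC\}$ and, on $E_{k-1}\cap G_k$, $\W(\nu^{K,u}_t,\bar\xi^u)<\gamma$ for all $t\in[(k-1)T_0,kT_0]$. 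Since $E_{k-1}$ is $\mathcal F_{(k-1)T_0}$-measurable and contained in $\{\nu^K_{(k-1)T_0}\in\CC\}$, the strong Markov property at $(k-1)T_0$ and Step~2 give $\Pb_{\nu^K_0}(E_{k-1}\setminus G_k)\le e^{-K\eta/2}$. As $\{R^K_\gamma\le\mathcal T_K\}\subset\bigcup_{k=1}^{N_K}(E_{k-1}\setminus G_k)$, a union bound gives $\Pb_{\nu^K_0}(R^K_\gamma\le\mathcal T_K)\le N_K\,e^{-K\eta/2}\le(T_0^{-1}e^{KV}+1)\,e^{-K\eta/2}$, which tends to $0$ once $V$ is fixed with $0<V<\eta/2$. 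This yields the theorem.
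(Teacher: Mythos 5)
Your proposal uses the same analytic ingredients as the paper — Theorem~\ref{theo_majoration}, Proposition~\ref{prop_lowerboundI}, Proposition~\ref{prop_stability} — but a genuinely different iteration: you partition $[0,e^{KV}]$ into blocks of fixed length $T_0$ and try to re-establish $\nu^{K,u}_{kT_0}\in B(\bar\xi^u,\gamma')$ at every block boundary, whereas the paper runs the standard Freidlin--Wentzell ring argument of \cite{dembo_zeitouni_1998} via three lemmas: Lemma~\ref{lemma_exit1} (the process cannot linger in the ring $B(\bar\xi^u,\gamma)\setminus B(\bar\xi^u,\rho)$), Lemma~\ref{lemma_exit2} (it exits the ring inward with overwhelming probability), and Lemma~\ref{lemma_exit4} (it cannot move a distance $\rho$ in a very short time), which are then combined by counting excursions.

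The block approach has a real gap that you correctly flag but do not resolve: your Step~1 needs a deterministic \emph{return} estimate, $\W(\xi^u_{T_0},\bar\xi^u)<\gamma'/2$ uniformly over $\W(\xi^u_0,\bar\xi^u)\le\gamma'$ and small $v$-mass, and this is strictly stronger than Proposition~\ref{prop_stability}, which only gives \emph{stability} — $\W(\xi^u_t,\bar\xi^u)<\gamma/2$ for $t\le t_{\epsilon'}$ — with $\gamma/2$ typically much larger than $\gamma'$. Without actual contraction back into $B(\bar\xi^u,\gamma')$, the invariant $\nu^{K,u}_{kT_0}\in\CC$ does not propagate across blocks: after one block and a stochastic error $\rho_0$ you only know $\W(\nu^{K,u}_{T_0},\bar\xi^u)<\gamma/2+C\rho_0$, which is not $<\gamma'$. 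The paper's ring argument deliberately avoids needing return: Lemma~\ref{lemma_exit1} only requires that every deterministic solution started in the ring leaves the ring in bounded time (either boundary), which is deduced from the ring containing no equilibria plus a compactness argument, and Lemma~\ref{lemma_exit2} then says the exit is inward. That return estimate is very plausibly true — the spectral decomposition \eqref{eq_systeme1}--\eqref{eq_systeme2} in Proposition~\ref{prop_stability}'s proof shows $\alpha_k(t)\to 0$ for $k\ge 2$ and $\beta(t)\to 1+O(\epsilon')$, so after time $T_0\sim|\log\gamma'|$ the flow is within $O(\epsilon')$ of $\bar\xi^u$ — but it has to be proved, with care about the coupled choices of $T_0$, $\epsilon'$, $\gamma'$ and $\epsilon$ (you need $\epsilon e^{\bar b T_0}<\epsilon'\lesssim\gamma'$).

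Two secondary remarks. First, the intermediate claim $\langle\xi^v_t,\one\rangle<\epsilon/2$ on $[0,T_0]$ in Step~1 is wrong in the invasion regime (the deterministic $v$-mass grows); what you actually use, and what is correct, is that $\langle\xi^v_t,\one\rangle<\epsilon e^{\bar b T_0}$, which is enough provided $\epsilon$ is chosen small relative to the Proposition~\ref{prop_stability} threshold $\epsilon'$, and that at block boundaries the \emph{stochastic} $v$-mass is re-set below $\epsilon$ by the cut-off $T^K_\epsilon$, not by any deterministic decay. Second, when passing from $\W(\nu^K_t,\zeta^{(k)}_t)<\rho_0$ on $M_F(\CX\times\{u,v\})$ to closeness of the $u$-marginals, the constant depends on $|u-v|^{-1}$; this is harmless since $u,v$ are fixed here, but should be stated.
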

Thus, a well-established monomorphic population $u$ is minimally affected during the emerging of a mutant population $v$. \\
\indent The assumptions on the radius $\gamma$ of the neighborhood ensure that there exists only one steady state in the neighborhood.\\
\indent The result is proved using ideas similar to the ones of Freidlin and Wentzell \cite{freidlin_wentzell_1984}. In our framework, the difficulties come from the continuous space motion. Firstly, our processes have values in an infinite dimensional space, thus, the required deterministic results are much more involved, see Subsection~\ref{subsec:stability}. 
Secondly, we deal with two kind of randomness~: jump process and spatial diffusion process.
The end of the section is devoted to the proof of Theorem~\ref{theo_exittimeintro}.


\subsection{Stability for the weak topology}
\label{subsec:stability}

\indent This subsection deals with the deterministic solution to \eqref{eq_theoapproxpop}. 
We denote by $(\xi_t)_{t\geq 0}$ the solution to equation \eqref{eq_theoapproxpop} with initial condition $\xi_0\in M_F(\CX\times\{u,v\})$. In this case, $\xi_t\in M_F(\CX\times \{u,v\})$ for all $t\geq 0$.
We prove that, as long as the size of the $v$-population density is small, the $u$-population density stays in a $\W$-neighborhood of its equilibrium $\bar g^u$. 
\begin{proposition}
\label{prop_stability}
Suppose that Assumption~\ref{ass_coef} holds. Let $\gamma>0$. There exist $\gamma'>0$ and $\epsilon'>0$ such that for any $\xi_0=\xi^u_0\delta_u +\xi^v_0\delta_v$ with $\W(\xi^u_0,\bar \xi^u)<  \gamma'$,
$$
\text{for all } t \leq t_{\epsilon'}=\inf\{t\geq 0, \langle \xi^v_t,\one \rangle > \epsilon'\}, \; \; \; \W(\xi^u_t,\bar \xi^u)<  \gamma/2.
$$
\end{proposition}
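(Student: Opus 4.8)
The plan is to show that the $u$-population density $\xi^u_t$ is a sub- and super-solution of perturbed versions of the monomorphic reaction-diffusion equation whose unique stable equilibrium is $\bar g^u$, where the perturbation is governed by the size $\langle\xi^v_t,\one\rangle$ of the $v$-population. Writing the equation \eqref{eq_theoapproxpop} restricted to $M_F(\CX\times\{u,v\})$ in terms of the densities $(g^u_t,g^v_t)$ of $(\xi^u_t,\xi^v_t)$ (recall that by Theorem~\ref{theo_limitpop} these densities exist and are $C^2$ for $t>0$), the equation for $g^u_t$ reads
\begin{equation*}
 \partial_t g^u_t = m^u\Delta_x g^u_t + \Big(b^u-d^u-\int_{\CX}c^{uu}(y)g^u_t(y)dy-\int_{\CX}c^{uv}(y)g^v_t(y)dy\Big)g^u_t,
\end{equation*}
with Neumann boundary condition. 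The extra term $\int_{\CX}c^{uv}(y)g^v_t(y)dy$ is bounded in absolute value by $\bar c\,\langle\xi^v_t,\one\rangle\leq\bar c\,\epsilon'$ on $[0,t_{\epsilon'}]$; hence $g^u_t$ lies between the solutions of the genuine monomorphic equation with competition kernel $c^{uu}$ and birth--death term shifted by $\pm\bar c\epsilon'$.

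First I would invoke the deterministic long-time results of \cite{leman_mirrahimi_meleard_2014} for the monomorphic equation: for the unperturbed equation the state $\bar g^u$ is globally asymptotically stable in the $\W$-topology among nonzero initial data (this is exactly where $H^u>0$, implicit in the standing situation, is used), and the stable equilibrium depends continuously on the shift parameter, so for the $\pm\bar c\epsilon'$-perturbed equations the corresponding equilibria $\bar g^u_{\pm}$ satisfy $\W(\bar g^u_\pm,\bar g^u)\to 0$ as $\epsilon'\to 0$. Combining this continuity with the asymptotic stability and a continuous-dependence-on-parameters/Gronwall estimate on a fixed finite window $[0,T_0]$ (chosen so that by time $T_0$ the unperturbed flow has entered $B(\bar\xi^u,\gamma/4)$ from anywhere in $B(\bar\xi^u,\gamma')$), I would get: choose $T_0$ from the unperturbed dynamics and $\gamma'$ so that the unperturbed flow from $B(\bar\xi^u,\gamma')$ stays in $B(\bar\xi^u,\gamma/4)$ for $t\geq T_0$ and in some a priori bounded set for $t\le T_0$; then choose $\epsilon'$ small enough that the perturbed flow stays $\gamma/4$-close to the unperturbed one on $[0,T_0]$ and that for $t\ge T_0$ the perturbed equilibria and the perturbed flow's $\omega$-limit set stay within $\gamma/4$ of $\bar\xi^u$. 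A standard invariant-region / comparison argument (the perturbed equation is a monotone-in-the-zeroth-order-coefficient perturbation, so one can sandwich $g^u_t$ between flows of autonomous monomorphic equations) then closes the estimate $\W(\xi^u_t,\bar\xi^u)<\gamma/2$ for all $t\le t_{\epsilon'}$.

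The main obstacle is the lack of a clean comparison principle in the $\W$-metric for this nonlocal equation: the competition terms are spatial integrals, not pointwise, so the equation is not order-preserving in the naive sense, and one cannot simply quote a scalar maximum principle. I expect the real work to be (i) making precise the "sandwiching" of $g^u_t$ by autonomous monomorphic flows with shifted coefficients — this requires a Gronwall/energy estimate controlling $\|g^u_t - \text{(monomorphic flow)}\|$ in a norm that dominates $\W$, uniformly while $\langle\xi^v_t,\one\rangle\le\epsilon'$ — and (ii) extracting from \cite{leman_mirrahimi_meleard_2014} not just asymptotic stability of $\bar g^u$ but a quantitative/uniform rate of attraction on the relevant bounded set of initial data, together with continuity of the equilibrium in the perturbation parameter. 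A secondary technical point is handling the initial layer $t\in(0,T_0]$ where one only has $\W(\xi^u_0,\bar\xi^u)<\gamma'$ and must propagate an $L^\infty$ or $H^1$ bound forward in order to run the comparison; here the smoothing in Theorem~\ref{theo_limitpop} (densities become $C^2$ instantly) and the a priori total-mass bound $\sup_t\langle\xi_t,\one\rangle<\infty$ are the tools to use.
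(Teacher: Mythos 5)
You correctly identify the overall three-step scaffolding that the paper uses (a short-time Gronwall layer starting from $\W$-closeness, parabolic smoothing so that $\xi^u_{t_0}$ has a Lipschitz density and $L^2$-distance is controlled by $\W$-distance via \eqref{eq_l2W}--\eqref{eq_majlip}, then a "real" stability estimate), and you correctly name the obstruction: because the competition terms are nonlocal integrals, the equation is not order-preserving and there is no usable pointwise comparison principle, so the sandwiching-between-autonomous-monomorphic-flows you propose cannot be run. The problem is that you stop there. Your plan's core step is precisely the one you flag as not working, and you offer only the hope that "the real work" will fix it; nothing in the sketch actually replaces the broken comparison argument. This is a genuine gap, not a technical detail.

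The paper's proof closes that gap with a spectral decomposition rather than a comparison principle. After the smoothing step one writes $g^u_t - \bar g^u = \sum_{k\geq 1}\alpha_k(t)A_k$ in the eigenbasis $(\lambda_k,A_k)$ of the linear operator $m^u\Delta_x+(b^u-d^u)$ with Neumann boundary conditions, and derives the exact ODE system \eqref{eq_systeme1}--\eqref{eq_systeme2} for the coefficients: the modes $\alpha_k$, $k\ge 2$, decay at rate $\lambda_k-H^u<0$ up to a drift controlled by $\int c^{uu}(g^u_t-\bar g^u)$ and $\int c^{uv}g^v_t\le\bar c\epsilon'$, while the principal mode (encoded in $\beta$) obeys a perturbed logistic equation with stable fixed point near $1$. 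One then runs a bootstrap: as long as $|\int c^{uu}(g^u_t-\bar g^u)|$ stays below a threshold $K<H^u-\lambda_2$, the higher modes are damped and the logistic bound keeps $\alpha_1$ small, and these together force $|\int c^{uu}(g^u_t-\bar g^u)|\le K/2$, which propagates the threshold up to $t_{\epsilon'}$. Finally $\W(g^u_t,\bar g^u)\le\|\one\|_{L^2}\|g^u_t-\bar g^u\|_{L^2}$ gives the claim. This is exactly the mechanism that supplies the "quantitative/uniform rate of attraction" you ask for from \cite{leman_mirrahimi_meleard_2014}: it comes from the spectral gap $H^u-\lambda_2>0$, not from a comparison theorem or from continuity of equilibria in a shift parameter (which is never invoked and would not by itself control the transient). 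Without this or an equivalent Lyapunov/energy argument on $\|g^u_t-\bar g^u\|_{L^2}$, your plan does not close.
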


The proof of Proposition~\ref{prop_stability} implies two main difficulties.  First, using ideas similar to Part 3.3 of \cite{leman_mirrahimi_meleard_2014}, we can prove that the solution $\xi_t$ to \eqref{eq_theoapproxpop} stays close to $\bar \xi^u$ if the initial condition admits a density which is close to the density $\bar g^u$ of $\bar\xi^u$ for the $L^2$-distance. However, this is not sufficient since we will deal with discrete measures later. Thus, we need to enlarge the result for $\W$-distance. Secondly, we are concerned with the trajectories of the $u$-population process. Even though the $v$-population size is small, it does have an impact on the death rate of individuals $u$ which we cannot ignore.\\
The proof is divided into three steps. Firstly, we study how fast a solution with initial condition close to $\bar\xi^u$ moves away from $\bar\xi^u$ in $\W$-distance during a small time interval $[0,t_0]$. Then, as $t_0>0$, $\xi^u_{t_0}$ admits a density and so, we can compare the $\W$-distance and the $L^2$-distance of the densities between $\xi^u_{t_0}$ and $\bar \xi^u$. We finally prove a $L^2$-stability result similar to the one of Part 3.3 in~\cite{leman_mirrahimi_meleard_2014} but including the $v$-population process with a small size.\\

 \begin{proof}[Proof of Proposition~\ref{prop_stability}]
First we may assume that $\epsilon'\leq 1$ and $\gamma'<\gamma$. Hence, there exists $M>0$ such that any considered initial state satisfies $\langle \xi_0,\one \rangle <M$.\\
\indent We fix $t_0>0$ and we start with the first step. On the one hand, we can find an upper bound to $\sup_{r\in[0,t]} \langle \xi_r^u,\one \rangle$. Indeed
 \begin{equation}
 \label{eq_boundxiu}
  \langle \xi^u_t,\one \rangle \leq \langle \xi^u_0,\one \rangle +\bar b \int_0^t \langle \xi^u_s,\one \rangle ds,
 \end{equation}
 and using Gronwall's Lemma, we deduce that $\sup_{r\in[0,t]} \langle \xi^u_r,\one \rangle \leq M e^{\bar b t}$, for all $t\geq 0$.
On the other hand, using \eqref{eq_theoapproxpop} with $\CU=\{u,v\}$ and a mild formulation similar to Lemma 4.5 in \cite{champagnat_meleard_2007}, we find that $\xi^u_t$ satisfies: for any $f\in C^{Lip}(\CX)$,
 \begin{align*}
  \langle \xi^u_t- \bar\xi^u,f \rangle=& \langle \xi_0^u-\bar\xi^u ,P^u_tf \rangle + \int_0^t \langle \xi_s^u-\bar\xi^u, (b^u-d^u-c^{uu}\cdot\bar\xi^u )P^u_{t-s}f \rangle ds \\
  &+\int_0^t c^{uu}\cdot(\bar\xi^u-\xi^u_s) \langle \xi_s^u,  P^u_{t-s}f \rangle ds-\int_0^t (c^{uv}\cdot \xi^v_s) \langle \xi^u_s, P^u_{t-s}f \rangle ds .
 \end{align*}
For any $g$ Lipschitz-continuous, we denote by $\|g\|_{Lip}$ the smallest constant such that $g/\|g\|_{Lip} \in C^{Lip}(\CX)$. Since $\sup_{t\in [0,t_{\epsilon'}]} \langle \xi^v_t,\one \rangle \leq \epsilon'$, using Lemma~\ref{lemma_lipschitz} and the definition of distance $\W$, we obtain that, for all $t\leq t_{\epsilon'}$, 
\begin{equation}
 \label{eq_majeloignement}
 \begin{aligned}
  |\langle \xi_t^u-\bar\xi^u, f\rangle| \leq &\W(\xi_0^u,\bar\xi^u) +(\bar b + \bar d + \bar c \langle \bar \xi^u , \one \rangle )\int_0^t \W(\xi_s,\bar\xi)ds\\
  &+  \|c \|_{Lip} \sup_{r\in[0,t]} \langle \xi^u_r,\one \rangle \left( \int_0^t \W(\xi^u_s,\bar \xi^u)ds+\epsilon' \right).
 \end{aligned}
 \end{equation}
 Finally, \eqref{eq_boundxiu}, \eqref{eq_majeloignement} and Gronwall's Lemma imply that there exist $C_1, C_2$ independent of $\epsilon'$ and $\gamma'$ such that
 \begin{equation}
 \label{eq_debut}
  \sup_{r\in [0,t_0 \wedge t_{\epsilon'}]} \W(\xi_r^u,\bar\xi^u) \leq (\W(\xi_0^u,\bar\xi^u)+\epsilon' C_2) e^{C_1 t_0 \wedge t_{\epsilon'}}\leq  (\gamma'+\epsilon' C_2) e^{C_1 t_0}.
 \end{equation}
 According to \eqref{eq_debut}, we have to choose $\gamma'$ and $\epsilon'$ such that $(\gamma'+\epsilon' C_2) e^{C_1 t_0}<\gamma/2$.
 Note that if for all $\xi^v_0\in M_f(\CX)$, $t_{\epsilon'}\leq t_0$, the proof of Proposition~\ref{prop_stability} is complete. In what follows, let us assume that $t_{\epsilon'}>t_0$ for the considered initial state $\xi^v_0$.\\
\indent The next step is to compare the $L^2$-distance and the $\W$-distance between $\xi^u_{t_0}$ and $\bar\xi^u$.
According to Theorem~\ref{theo_limitpop}, for any $t_0>0$, $\xi^u_{t_0}$ has a Lipschitz-continuous density with respect to Lebesgue measure on $\CX$ that we denote by $g^u_{t_0}(x)$. In addition with the fact that $\bar g^u \in C^1(\CX)$, we have
 \begin{equation}
 \label{eq_l2W}
  \|g^u_{t_0}-\bar g^u\|^2_{L^2}=\int_\CX (g^u_{t_0}(x)-\bar g^u(x))^2dx \leq \W(\xi^u_{t_0},\bar\xi^u) (\|g^u_{t_0}\|_{Lip}+\|\bar g^u\|_{Lip}).
 \end{equation}
Let us bound $\|g^u_{t_0}\|_{Lip}$ from above. For any $t>0$, we define $h^u_{t}(x)=g^u_{t}(x) \exp(\int_0^{t_0}\left(c^{uu}\cdot g^u_s+c^{uv} \cdot \xi^v_s\right)ds)$. The exponent of the exponential term is positive and independent of $x$, thus $\|g^u_{t_0}\|_{Lip} \leq \|h^u_{t_0}\|_{Lip}$. Furthermore, according to Part 4 of chapter 5 in \cite{friedman_1964}, $h^u_{t_0}(x)=\int_\CX \Gamma_{t_0}(x,y)\xi_0(dy)$ where $\Gamma$ is the fundamental solution to the system
 \begin{equation*}
  \left\{
 \begin{aligned}
  &\p_t \Gamma =m^u\Delta \Gamma +(b^u(x)-d^u(x))\Gamma \text{ on } \CX \times \R^+,\\
 &\p_n \Gamma=0 \text{ on } \p \CX \times \R^+,\\
 & \Gamma(0,dx)=\xi_0(dx).
 \end{aligned}
 \right.
 \end{equation*}
As $t_0>0$, $\|\Gamma_{t_0}\|_{L^{\infty}(\CX)}$ and $\| \nabla \Gamma_{t_0}\|_{L^\infty(\CX)}$ are bounded from above and there exists $C_3$ such that
 \begin{equation}
 \label{eq_majlip}
  \|g^u_{t_0}\|_{Lip} \leq \|h^u_{t_0}\|_{Lip} \leq C_3 \langle \xi^u_0,\one \rangle \leq C_3 M,
 \end{equation}
 where $M$ has been defined in the beginning of the proof.
Combining \eqref{eq_debut}, \eqref{eq_l2W}, \eqref{eq_majlip} and the fact that $\W(\xi^u_0,\bar \xi^u)<\gamma'$, we find $C_4(\epsilon',\gamma')>0$ such that 
 \begin{equation*}
 \begin{aligned}
  \|g^u_{t_0}-\bar g^u\|_{L^2} &\leq \left((\W(\xi^u_0,\bar \xi^u )+C_2\epsilon')e^{C_1 t_0}(C_3M+\|\bar g^u\|_{Lip}) \right)^{1/2}\\ 
  &  \leq \left((\gamma'+C_2\epsilon')e^{C_1 t_0}(C_3 M+\|\bar g^u\|_{Lip}) \right)^{1/2} :=C_4(\epsilon', \gamma').
  \end{aligned}
 \end{equation*}
\indent  We now deal with the last step of the proof. 
We define $(\lambda_k,A_k)_{k\geq 1}$ the spectral basis for the operator $m^u \Delta_x.+(b^u-d^u).$ with Neumann boundary condition such that $(\lambda_k)_{k\geq 1}$ is a non-decreasing sequence with $H^u:=\lambda_1 > \lambda_2 \geq \lambda_3...$ and $(A_k)_{k\geq 1}$ is an orthonormal basis of $L^2(\CX)$. Note that $\bar g^u$ is equal to $\|\bar g^u \|_{L^2} A_1$. Let us express $g^u-\bar g^u$ in the basis $(A_k)_{k\geq 1}$
\begin{equation*}
  g^u_t(x)=\bar g^u(x) +\sum_{i=1}^{+\infty} \alpha_i(t)A_i(x),
 \end{equation*} 
 and denote for all $t\in \R^+$,
 \begin{equation*}
 \beta(t):=1+\frac{\alpha_1(t)}{\|\bar g^u \|_{L^2}}.
 \end{equation*}
From \eqref{eq_theoapproxpop} and the representation of $g^u-\bar g^u$ and $\partial_t g^u$ with respect to the basis $(A_k)_{k\geq 1}$, we find the following dynamical system 
 \begin{align}
  \label{eq_systeme1}
&\p_t \alpha_k(t)=\alpha_k(t) \left( \lambda_k-H^u-\int_{\CX}c^{uv} g^v_t-\int_\CX c^{uu}(g^u_t-\bar g^u) \right), \text{ for all } k\geq 2,\\
\label{eq_systeme2}
&\p_t \beta(t)=\beta(t)\left( H^u-\int_\CX c^{uv}g^v_t-\sum_{i=2}^{+\infty} \alpha_i(t)\Big(\int_\CX c^{uu}A_i\Big) -H^u \beta(t) \right).
  \end{align}
 The last step of the proof consists in proving that if $\epsilon'$ and $\gamma'$ are sufficiently small such that
 \begin{equation}
 \label{eq_hypepsgamma}
  \max\left\{1, \frac{\|\bar g^u\|_{L^2}}{H^u} \right\} \cdot (\epsilon' \bar c +3\|c^{uu}\|_{L^2} C_4(\epsilon',\gamma') ) < \min\left\{\frac{H^u-\lambda_2}{2},\frac{\gamma}{2\|\one\|_{L^2}}\right\},
 \end{equation}
then for all $t\leq t_{\epsilon'}$, $\W(g^u_t,\bar g^u)< \gamma/2$.
Let us fix $K>0$ such that 
\begin{equation}
 \label{eq_defK}
\epsilon' \bar c +3\|c^{uu}\|_{L^2} C_4(\epsilon',\gamma')\leq K/2 < (H^u-\lambda_2)/2, 
\end{equation}
and
$$
t_{max}=\inf\{t\geq t_0, |\int_\CX c^{uu}(g^u_t-\bar g ^u)|\geq K\}. 
$$
Let us prove that $t_{max}\geq t_{\epsilon'}$. 
At $t=t_0$, $|\int_\CX c^{uu}(g^u_{t_0}-\bar g^u)|\leq \|c^{uu}\|_{L^2}C_4(\epsilon',\gamma')\leq K/2<K.$ A continuity argument implies that $t_{max}>t_0$.
Then, using \eqref{eq_systeme1}, we deduce that for any $k\geq 2$, for any $t_0\leq t\leq t_{max}\wedge t_{\epsilon'}$,
$$
\alpha_k(t)^2\leq \alpha(t_0)^2 \exp(2(\lambda_k-H^u+K)(t-t_0))\leq \alpha(t_0)^2.
$$
Thus 
\begin{equation}
 \label{eq_sumalphai}
\begin{aligned}
 \left|\sum_{i=2}^{+\infty} \alpha_i(t)\int_\CX (c^{uu}A_i)\right|&\leq \left(\sum_{i=2}^{+\infty}\alpha_i(t)^2 \right)^{1/2}\left(\sum_{i=2}^{+\infty}(\int_\CX c^{uu}A_i)^2 \right)^{1/2}\\
 &\leq \left(\sum_{i=1}^{+\infty}\alpha_i(t_0)^2 \right)^{1/2}\left(\sum_{i=1}^{+\infty}(\int_\CX c^{uu}A_i)^2 \right)^{1/2} \leq C_4(\epsilon',\gamma') \|c^{uu}\|_{L^2}.
\end{aligned}
\end{equation}
Inserting \eqref{eq_sumalphai} in Equation \eqref{eq_systeme2} implies that, for all $t\leq t_{max}\wedge t_{\epsilon'} $,
\begin{multline*}
\beta(t) \left(H^u-\epsilon' \bar c -C_4(\epsilon',\gamma')\|c^{uu}\|_{L^2}-H^u\beta(t)\right)\leq \p_t\beta(t) \\
\leq \beta(t) \left(H^u+\epsilon' \bar c +C_4(\epsilon',\gamma')\|c^{uu}\|_{L^2}-H^u\beta(t)\right).
\end{multline*}
Moreover, $|\beta(t_0)-1|\|\bar g^u\|_{L^2} \leq C_4(\epsilon',\gamma')$. Using properties of logistic equations, we deduce that for all $t_0\leq t\leq t_{max} \wedge t_{\epsilon'}$,
\begin{equation}
 \label{eq_alpha1}
|\alpha_1(t)| = |\beta(t)-1|\|\bar g^u\|_{L^2}  \leq \frac{\|\bar g^u\|_{L^2}}{H^u}(\epsilon' \bar c +C_4(\epsilon',\gamma')\|c^{uu}\|_{L^2})+C_4(\epsilon',\gamma').
\end{equation}
Thus, for all $t_0\leq t\leq t_{max}\wedge t_{\epsilon'}$, \eqref{eq_sumalphai} and \eqref{eq_alpha1} imply that
\begin{align*}
\left|\int_\CX c^{uu}(g^u_t-\bar g ^u)\right|&\leq |\alpha_1(t)|\frac{H^u}{\|\bar g^u\|_{L^2}}+\left| \sum_{i=2}^{+\infty} \alpha_i(t)\int_\CX (c^{uu}A_i) \right|\\
&\leq \epsilon' \bar c +2C_4(\epsilon', \gamma')\|c^{uu}\|_{L^2} + \frac{H^u}{\|\bar g^u\|_{L^2}} C_4(\epsilon', \gamma').
\end{align*}
By definition, $H^u=\int_\CX c^{uu} \bar g^u\leq \|c^{uu}\|_{L^2} \|\bar g^u\|_{L^2}$ and \eqref{eq_defK} gives for all $t\leq t_{max} \wedge t_{\epsilon'}$,
\begin{equation}
\label{eq_K2}
\left|\int_\CX c^{uu}(g^u_t-\bar g ^u)\right|\leq \epsilon' \bar c +3C_4(\epsilon', \gamma')\|c^{uu}\|_{L^2}\leq \frac{K}{2}.
\end{equation}
Inequality~\eqref{eq_K2} implies that $t_{max}\wedge t_{\epsilon'} < t_{max}$, that is, $t_{max}\wedge t_{\epsilon'} = t_{\epsilon'}$ and all inequalities proved above are true for all $t\leq t_{\epsilon'}$. In addition, \eqref{eq_hypepsgamma}, \eqref{eq_sumalphai} and \eqref{eq_alpha1} imply that, for all $t\leq t_{\epsilon'}$,
\begin{align*}
\W(g^u_t,\bar g^u)&\leq \|g^u_t-\bar g^u\|_{L^2}\|\one\|_{L^2} \leq \|\one\|_{L^2}\left( |\alpha_1(t)|+|\sum_{i=1}^{+\infty} \alpha_i(t)^2 |^{1/2} \right)\\
&\leq \frac{\|\one\|_{L^2}\|\bar g^u\|_{L^2}}{H^u}(\epsilon'\bar c +3C_4(\epsilon', \gamma')\|c^{uu}\|_{L^2})\leq \frac{\gamma}{2}.
\end{align*}
  That ends the proof.
 \end{proof}

\subsection{Exit time}
\label{subsec:exittime}

This subsection is devoted to the proof of Theorem~\ref{theo_exittimeintro}. We split the proof into three lemmas similar to the ones in Dembo and Zeitouni \cite{dembo_zeitouni_1998}.\\
Let $\gamma>0$ satisfying the assumptions of Theorem~\ref{theo_exittimeintro}. 
We consider $\epsilon'$ and $\gamma'$ as in Proposition~\ref{prop_stability} and set $\epsilon=\frac{\epsilon'}{2}$ and $\rho=\frac{\gamma'}{3}<\gamma$. $R^K_\gamma$, $S_1$ and $T^K_\epsilon$ have been defined by \eqref{def_RKgamma}, \eqref{def_S1} and \eqref{def_TKepsilon} and
let us define 
$$
\tau=\inf\{ t\geq 0, \W(\nu_t^{K,u},\bar{\xi}^u)\not\in ]\rho,\gamma[ \}.
$$

\begin{lemma}
\label{lemma_exit1}
Under Assumption~\ref{ass_coef},
we have
\begin{equation*}
\underset{t\to +\infty}{\lim} \; \underset{K \to +\infty}{\limsup} \; \dfrac{1}{K} \log \underset{\nu_0^K \in (B(\bar{\xi}^u,\gamma)\times B(0,\epsilon))\cap M_F^K}{\sup}\; \Pb_{\nu^K_0}(\tau \wedge T^K_\epsilon \wedge S_1^K >t)=-\infty.
\end{equation*}
\end{lemma}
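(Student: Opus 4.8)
The plan is to show that, uniformly over admissible initial data, the process cannot stay trapped in the annulus $\{\rho < \W(\cdot,\bar\xi^u) < \gamma\}$ for a long deterministic time without either a large deviation occurring, or the $v$-population growing past $\epsilon$, or a third mutation appearing. The mechanism is that of Freidlin--Wentzell: on the event $\{\tau \wedge T^K_\epsilon \wedge S^K_1 > t\}$, the trajectory $\nu^K$ lives in a compact set of $\mathbb{D}([0,t],M_F(\CX\times\CU))$ (population mass bounded, only traits $\{u,v\}$ present), and by Proposition~\ref{prop_lowerboundI} the deterministic solution $\xi$ to \eqref{eq_theoapproxpop} started from $\nu^K_0$ would have to remain within $\W$-distance $C(t,M)(I^t(\nu^K)+\sqrt{I^t(\nu^K)})$ of $\nu^K$ on $[0,t]$; in particular the $u$-marginal would stay in the annulus too. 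But Proposition~\ref{prop_stability} (with $\epsilon' = 2\epsilon$, $\gamma' = 3\rho$) forces the deterministic $u$-marginal, started within $\gamma'$ of $\bar\xi^u$, to enter $B(\bar\xi^u,\gamma/2)$ and (a short additional argument using the contracting drift near $\bar\xi^u$, i.e. \eqref{eq_systeme1}--\eqref{eq_systeme2}) to remain there — hence to \emph{leave} the annulus — before time $t_{\epsilon'} \geq t$ once $t$ is chosen large enough. The only way to reconcile these is $I^t(\nu^K)$ bounded below by a positive constant $c(t)>0$ on that event.

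Concretely, the steps I would carry out are: (i) Fix $M>0$ dominating the mass of all admissible initial measures and, via Gronwall as in \eqref{eq_boundxiu}, dominating $\sup_{s\leq t}\langle \nu^{K,u}_s,\one\rangle$ up to time $\tau\wedge T^K_\epsilon\wedge S^K_1$ too; restrict attention to the stopped process, which has total mass $\leq M$ and charges only $\CX\times\{u,v\}$. (ii) Show there is a time horizon $t^\star$ and a constant $c>0$ such that for every admissible $\nu_0$ and every trajectory $\nu\in\mathbb{D}([0,t^\star],M_F(\CX\times\{u,v\}))$ with $\sup\langle\nu_s,\one\rangle\leq M$, $\nu_0\in B(\bar\xi^u,\gamma)\times B(0,\epsilon)$, that stays in the annulus on $[0,t^\star]$ with $\langle\nu^v_s,\one\rangle\leq\epsilon$, one has $I^{t^\star}_{\nu_0}(\nu) \geq c$: indeed if $I^{t^\star}(\nu)$ were small, Proposition~\ref{prop_lowerboundI} would put $\nu^u$ uniformly close to the deterministic $\xi^u$, which by Proposition~\ref{prop_stability} and the local stability of $\bar\xi^u$ has left the annulus by time $t^\star$ — contradiction for $t^\star$ large. (iii) Apply the large-deviation upper bound Theorem~\ref{theo_majoration} with the compact $\CC = \overline{B(\bar\xi^u,\gamma)\times B(0,\epsilon)}$ (intersected with $M^K_F$) and the closed set $A = A_{t^\star}$ of trajectories confined to the annulus with small $v$-mass and no third trait up to $t^\star$: this gives $\limsup_K \frac1K\log\sup_{\nu^K_0}\Pb_{\nu^K_0}(\tau\wedge T^K_\epsilon\wedge S^K_1 > t^\star) \leq -\inf_{A_{t^\star}} I^{t^\star} \leq -c < 0$. (iv) Iterate over successive blocks of length $t^\star$ using the Markov property to replace $t^\star$ by $nt^\star$ and get $\leq -nc$, then let $n\to\infty$; a subadditivity/conditioning argument exactly as in Dembo--Zeitouni handles the uniformity in the initial condition across blocks.

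The main obstacle is step (ii): turning ``small rate function'' into ``the $u$-marginal must have exited the annulus'' requires more than Proposition~\ref{prop_stability} alone, because that proposition only guarantees the deterministic solution reaches $B(\bar\xi^u,\gamma/2)$ and stays there up to $t_{\epsilon'}$ — I additionally need that it gets (and stays) strictly inside, i.e. below $\rho$, so that $\nu^u$ genuinely leaves $]\rho,\gamma[$; this is where the spectral gap $H^u - \lambda_2 > 0$ and the logistic equation for $\beta$ in \eqref{eq_systeme1}--\eqref{eq_systeme2} are used, to show the $L^2$-distance of $g^u_t$ to $\bar g^u$ actually contracts to a level below $\rho/\|\one\|_{L^2}$ in finite time, uniformly over the admissible slow perturbation coming from the $v$-population of size $\leq\epsilon$. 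The other delicate point is checking that $A_{t^\star}$ (or its closure) really is contained in a set of uniformly bounded mass so that Theorem~\ref{theo_majoration} applies, and that passing to the closure does not create confined trajectories with $I^{t^\star}=0$; both follow from the a priori mass bound in (i) and the lower semicontinuity of $I^{t^\star}$.
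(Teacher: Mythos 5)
Your overall architecture — apply Theorem~\ref{theo_majoration} to the set of trajectories confined to the annulus, show the rate function is bounded below by a positive constant on that set, then iterate to push the bound to $-\infty$ — is exactly the paper's, and steps~(i), (iii), (iv) match. The paper's iteration uses super-additivity of the rate function (Chasles via the non-variational formulation of Theorem~\ref{theo_functionalrate}, giving $I^{nT_0}(\nu)\ge\sum I^{T_0}(\nu^i)$) where you propose a Markov-property conditioning at the probability level; both are standard and equivalent for this purpose.

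The genuine gap is in step~(ii), and it is the one you half-suspect. Proposition~\ref{prop_stability} is a \emph{local} result: it assumes $\W(\xi^u_0,\bar\xi^u)<\gamma'$, and $\gamma'$ is strictly smaller than $\gamma$ (indeed $\rho=\gamma'/3$ and $\gamma'<\gamma$). But the annulus you must rule trajectories out of is $\rho<\W(\cdot,\bar\xi^u)<\gamma$, so initial conditions with $\W(\nu^u_0,\bar\xi^u)\in[\gamma',\gamma)$ are admissible, and Proposition~\ref{prop_stability} says nothing about them. Moreover, even on the range it does cover, the conclusion is only $\W(\xi^u_t,\bar\xi^u)<\gamma/2$, which does \emph{not} exit the annulus since $\gamma/2>\rho$; forcing the solution down past $\rho$ via the spectral/logistic contraction would require the $O(\epsilon')$ limiting offset to be $<\rho$, i.e.\ a separate constraint $\epsilon'\ll\gamma'$ that is not provided. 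The paper sidesteps both difficulties by invoking the \emph{global} asymptotics of \eqref{eq_theoapproxpop} (Theorems~1.2 and~1.4 of \cite{leman_mirrahimi_meleard_2014}): every deterministic solution converges to one of a finite list of stationary states, and the hypotheses on $\gamma$ in Theorem~\ref{theo_exittimeintro} were designed precisely to ensure none of those stationary states lies in $(B(\bar\xi^u,\gamma)\setminus B(\bar\xi^u,\rho))\times B(0,\epsilon)$. Hence for each admissible $\xi_0$ there is a finite $T_{\xi_0}$ at which the solution has escaped the annulus (either radially or by $v$-mass growth, cf.\ \eqref{eq_Tdeter}); a Gronwall-type continuity estimate for the flow together with compactness of $\mathrm{adh}\bigl((B(\bar\xi^u,\gamma)\setminus B(\bar\xi^u,\rho))\times B(0,\epsilon)\bigr)$ then yields the uniform time $T_0$. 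You should replace your Proposition-\ref{prop_stability}-based step~(ii) by this global-convergence-plus-compactness argument; the local stability result alone cannot close the proof.
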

\noindent
That is, the probability that the process $\nu^{K,u}$ stays a long time in the ring $B(\bar{\xi}^u,\gamma) \setminus B(\bar{\xi}^u,\rho)$ is exponentially small. The proof requires a comparison with the deterministic paths of Equation~\eqref{eq_theoapproxpop}, the difficulty is to prove that there exists a finite time after which all deterministic paths starting in the ring are out of the ring. The fact that the probability is exponentially small is a consequence of Theorem~\ref{theo_majoration}.

\begin{lemma}
\label{lemma_exit2}
Under Assumption~\ref{ass_coef}, there exists $V>0$ such that 
\begin{equation*}
 \underset{K \to +\infty}{\limsup} \; \dfrac{1}{K} \; \log \underset{\nu_0^K \in (B(\bar{\xi}^u,\gamma')\times B(0,\epsilon))\cap M_F^K}{\sup}\; \Pb_{\nu^K_0}(\W(\nu^{K,u}_{\tau },\bar{\xi}^u)\geq \gamma, \tau \leq T^K_\epsilon \wedge S_1^K)\leq -V.
\end{equation*}
\end{lemma}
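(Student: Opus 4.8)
Here is how I would approach Lemma~\ref{lemma_exit2}.

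The plan is the classical Freidlin--Wentzell exit estimate (cf.\ \cite{freidlin_wentzell_1984} and Chapter~5 of \cite{dembo_zeitouni_1998}), adapted through the large deviation upper bound of Theorem~\ref{theo_majoration}: to cross the outer sphere $\{\W(\cdot,\bar\xi^u)=\gamma\}$ the $u$-marginal must move a macroscopic distance away from the deterministic behaviour guaranteed by Proposition~\ref{prop_stability}, which costs a strictly positive amount of rate function and is therefore exponentially unlikely. I would first fix $t>0$ large enough that Lemma~\ref{lemma_exit1} gives $\limsup_{K\to+\infty}\frac{1}{K}\log\sup_{\nu^K_0}\Pb_{\nu^K_0}(\tau\wedge T^K_\epsilon\wedge S^K_1>t)\le-1$ (this choice does not depend on anything derived below, so no circularity arises), and then, for $\nu^K_0\in(B(\bar\xi^u,\gamma')\times B(0,\epsilon))\cap M_F^K$, split
\begin{multline*}
\Pb_{\nu^K_0}\!\big(\W(\nu^{K,u}_{\tau},\bar\xi^u)\ge\gamma,\ \tau\le T^K_\epsilon\wedge S^K_1\big)\\
\le \Pb_{\nu^K_0}\!\big(\tau\wedge T^K_\epsilon\wedge S^K_1>t\big)+\Pb_{\nu^K_0}\!\big(\W(\nu^{K,u}_{\tau},\bar\xi^u)\ge\gamma,\ \tau\le t\wedge T^K_\epsilon\wedge S^K_1\big).
\end{multline*}
Since $B(\bar\xi^u,\gamma')\subset B(\bar\xi^u,\gamma)$, the first term is handled by the above consequence of Lemma~\ref{lemma_exit1}.

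For the second term I would stop the process at $\theta:=\tau\wedge T^K_\epsilon\wedge S^K_1\wedge\tau^K_M$, where $\tau^K_M$ is the hitting time of total mass $M:=\langle\bar\xi^u,\one\rangle+\gamma+\epsilon'+1$ (so $\tilde\nu^K:=\nu^K_{\cdot\wedge\theta}$ has total mass $\le M$ at all times, as in the proof of Theorem~\ref{theo_majoration}). On the event in the second term one has $\theta=\tau\le t$ and $\W(\nu^{K,u}_{\theta},\bar\xi^u)\ge\gamma$; moreover $\tilde\nu^K_s$ is supported on $\CX\times\{u,v\}$ and has $v$-mass $\le\tfrac{3\epsilon'}{4}$ for every $s$ (before $\tau$ the $v$-mass is $<\epsilon=\epsilon'/2$, the $u$-mass stays within $\gamma$ of $\langle\bar\xi^u,\one\rangle$, and the jump at $\theta$ has size $\le1/K$, whence $\tau<\tau^K_M$ for $K$ large). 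Consequently the second probability is at most $\Pb_{\nu^K_0}(\tilde\nu^K\in A)$ with
\begin{multline*}
A:=\{\nu\in\D([0,t],M_F(\CX\times\CU)):\ \mathrm{supp}\,\nu_s\subseteq\CX\times\{u,v\},\ \langle\nu^v_s,\one\rangle\le\tfrac{3\epsilon'}{4},\ \langle\nu_s,\one\rangle\le M\ \forall s\le t;\\
\ \sup_{s\le t}\W(\nu^u_s,\bar\xi^u)\ge\gamma\}.
\end{multline*}
As $A$ has uniformly bounded mass, the proof of Theorem~\ref{theo_majoration} applies verbatim to $\tilde\nu^K$ (with $\theta$ in place of $\tau^K_M$ throughout: the extra stopping only keeps the martingale $\mathcal N$ used there bounded), giving, with the weakly compact set $\CC:=\overline{B(\bar\xi^u,\gamma')}\times\overline{B(0,\epsilon)}$,
\[
\limsup_{K\to+\infty}\frac{1}{K}\log\ \sup_{\nu^K_0\in\CC\cap M_F^K}\Pb_{\nu^K_0}\big(\tilde\nu^K\in A\big)\ \le\ -V_A,\qquad V_A:=\inf_{\xi\in\CC,\ \nu\in\bar A}I^{t}_{\xi}(\nu).
\]

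The heart of the proof is then $V_A>0$, for which I would use the quantitative bound of Proposition~\ref{prop_lowerboundI} rather than any compactness of the level sets of $I^t$. Let $\nu\in\bar A$ and $\xi_0:=\nu_0\in\CC$; the three pointwise constraints defining $A$ pass to the Skorokhod closure (on the closed set $\{\mathrm{supp}\,\nu_s\subseteq\CX\times\{u,v\}\ \forall s\le t\}$ the maps $\nu\mapsto\langle\nu^v_s,\one\rangle$ and $\nu\mapsto\langle\nu_s,\one\rangle$ are continuous, and $\nu\mapsto\sup_{s\le t}\W(\nu^u_s,\bar\xi^u)$ is Skorokhod-continuous), so $\nu$ still satisfies $\langle\nu_s,\one\rangle\le M$, $\langle\nu^v_s,\one\rangle\le\tfrac{3\epsilon'}{4}$ and $\sup_{s\le t}\W(\nu^u_s,\bar\xi^u)\ge\gamma$. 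Let $(\xi_s)_{s\ge0}$ solve \eqref{eq_theoapproxpop} with datum $\xi_0$; Proposition~\ref{prop_lowerboundI} yields $\sup_{s\le t}\W(\nu_s,\xi_s)\le C(t,M)\big(I^t(\nu)+\sqrt{I^t(\nu)}\big)$. Choose $\eta_0>0$ so small that $C(t,M)(\eta_0+\sqrt{\eta_0})<\tfrac{1}{4}\min\{\gamma,\epsilon'\}$. If $I^t(\nu)<\eta_0$, then $\langle\xi^v_s,\one\rangle<\tfrac{3\epsilon'}{4}+\tfrac{\epsilon'}{4}=\epsilon'$ for all $s\le t$, so $t_{\epsilon'}>t$, and since $\W(\xi^u_0,\bar\xi^u)\le\gamma'$ Proposition~\ref{prop_stability} forces $\W(\xi^u_s,\bar\xi^u)<\gamma/2$ for all $s\le t$; hence $\sup_{s\le t}\W(\nu^u_s,\bar\xi^u)<\tfrac{\gamma}{2}+\tfrac{\gamma}{4}<\gamma$, contradicting $\nu\in\bar A$. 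Therefore $I^t(\nu)\ge\eta_0$ on $\bar A$, i.e.\ $V_A\ge\eta_0>0$, and combining the two terms the lemma holds with, e.g., $V:=\min\{1,\eta_0\}$. I expect the only real difficulty to be this last step, where the two deterministic inputs are genuinely needed together --- the quantitative comparison of an arbitrary trajectory with the solution of \eqref{eq_theoapproxpop} (Proposition~\ref{prop_lowerboundI}), and the weak-topology stability of the monomorphic equilibrium under a small invading population (Proposition~\ref{prop_stability}) --- while the rest is the somewhat delicate but routine stopping-time bookkeeping needed to apply Theorem~\ref{theo_majoration}.
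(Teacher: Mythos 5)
Your proof is correct and follows essentially the same route as the paper's: cut off the long-time contribution via Lemma~\ref{lemma_exit1}, reduce to a fixed compact horizon $[0,t]$, apply the large-deviation upper bound (Theorem~\ref{theo_majoration}) on a bounded-mass tube event, and show the rate-function infimum over that tube is strictly positive by combining Proposition~\ref{prop_stability} (the deterministic trajectory stays in $B(\bar\xi^u,\gamma/2)$ while the $v$-mass is small) with Proposition~\ref{prop_lowerboundI} (the rate function controls the distance of $\nu$ to the deterministic solution). The two small points where you depart cosmetically from the paper actually tidy it up: you enforce the mass and $v$-mass bounds by stopping the process at $\theta=\tau\wedge T^K_\epsilon\wedge S^K_1\wedge\tau^K_M$ and observing that $\theta=\tau$ on the event of interest, rather than peeling off an extra exponentially-small term via Lemma~\ref{lemma_tensionexpo} as the paper does (though the tightness lemma is still used inside the proof of Theorem~\ref{theo_majoration}, so nothing is really saved); and you obtain the uniform lower bound $I^{t}(\nu)\ge\eta_0$ directly from the quantitative inequality of Proposition~\ref{prop_lowerboundI}, whereas the paper gestures at an additional compactness argument that is in fact not needed for this lemma. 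The argument for why the three defining constraints of $A$ survive Skorokhod closure is a genuine technical point worth the attention you give it, and is handled adequately.
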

\noindent
Once again, the proof is based on a comparison with the deterministic paths, it is a consequence of Proposition~\ref{prop_stability} and Theorem~\ref{theo_majoration}.

\begin{lemma}
\label{lemma_exit4}
Under Assumption~\ref{ass_coef}, for all $C>0$, there exists $T(C,\rho)>0$ such that,
\begin{equation*}
\underset{K\to +\infty}{\limsup} \; \dfrac{1}{K}\; \log \underset{\nu^{K,u}_0 \in B(\bar{\xi}^u,\gamma)\cap M^K_F}{\sup} \; \Pb_{\nu^K_0}\bigg(\underset{t\in [0,T(C,\rho)]}{\sup}\; \W(\nu^{K,u}_t,\nu^{K,u}_0)\geq \rho\bigg)<-C. 
\end{equation*}
\end{lemma}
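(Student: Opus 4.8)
The plan is to show that on a short time interval $[0,T(C,\rho)]$ the process $\nu^{K,u}$ cannot move a $\W$-distance $\rho$ away from its starting point, except with exponentially small probability. The natural tool is Theorem~\ref{theo_majoration}: the event $\{\sup_{t\in[0,T]}\W(\nu^{K,u}_t,\nu^{K,u}_0)\geq\rho\}$ is (a projection of) a closed subset $A_T$ of $\mathbb{D}([0,T],M_F(\CX\times\CU))$, and it suffices to check that the rate-function cost $\inf_{\xi,\nu\in\bar A_T}I^T_\xi(\nu)$ of this event tends to $+\infty$ as $T\to 0$, uniformly over admissible initial conditions. Concretely, I would first use the mass bound: since $\langle\nu^{K,u}_t,\one\rangle$ is dominated (via Gronwall as in \eqref{eq_boundxiu}) by $Me^{\bar b T}$ with probability $1-e^{-cK}$, we may intersect with the compact-mass set required by Theorem~\ref{theo_majoration}, so the theorem applies and the problem is reduced to the deterministic statement
\begin{equation*}
\inf\Big\{ I^T_{\xi}(\nu) : \xi\in \overline{B(\bar\xi^u,\gamma)},\ \sup_{t\in[0,T]}\W(\nu^u_t,\xi^u)\ge\rho \Big\}\ \xrightarrow[T\to0]{}\ +\infty.
\end{equation*}

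The key step is the lower bound on $I^T$ in terms of displacement, and here I would invoke Proposition~\ref{prop_lowerboundI}: if $(\xi_t)_{t\le T}$ is the solution to \eqref{eq_theoapproxpop} started at $\nu_0$, then $\sup_{t\le T}\W(\nu_t,\xi_t)\le C(T,M)\big(I^T(\nu)+\sqrt{I^T(\nu)}\big)$, and $C(T,M)$ can be taken non-decreasing in $T$, hence bounded by $C(1,M)=:C_0$ for $T\le 1$. On the other hand, the deterministic flow itself moves slowly on short times: reading off \eqref{eq_theoapproxpop} with test functions in $C^{Lip}(\CX)$ and using the mild formulation together with Lemma~\ref{lemma_lipschitz} (so that $P^u_{t-s}f\in C^{Lip}(\CX)$), one gets $\sup_{t\le T}\W(\xi_t,\xi_0)\le C_1 T$ for a constant $C_1=C_1(M)$ depending only on the mass bound and the bounds $\bar b,\bar d,\bar c$ on the coefficients. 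Combining the two estimates by the triangle inequality,
\begin{equation*}
\rho\ \le\ \sup_{t\le T}\W(\nu^{K,u}_t,\nu^{K,u}_0)\ \le\ C_1 T + C_0\big(I^T(\nu)+\sqrt{I^T(\nu)}\big),
\end{equation*}
so as soon as $T\le \rho/(2C_1)$ we obtain $I^T(\nu)+\sqrt{I^T(\nu)}\ge \rho/(2C_0)$, i.e. a lower bound on $I^T(\nu)$ that does \emph{not} shrink with $T$. This already gives a uniform positive lower bound $V_0=V_0(\rho,M)>0$, hence a statement with some fixed negative exponent; to upgrade it to ``$<-C$ for every $C$'' I would iterate over $\lceil T(C,\rho)/\delta\rceil$ successive subintervals of length $\delta=\rho/(2C_1)$, using the Markov property and the fact that on each subinterval the process must either stay within $\rho/2$ of the subinterval's starting point or pay cost at least $V_0(\rho/2,M)$; summing the exponents gives total cost $\ge \lceil T(C,\rho)/\delta\rceil\,V_0(\rho/2,M)$, which exceeds $C$ once $T(C,\rho)$ is chosen large enough. (One has to be mildly careful that the displacement over the whole interval reaching $\rho$ forces at least one subinterval to register displacement $\rho/2$; a telescoping argument handles this.)

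The main obstacle is the uniformity of all constants over the initial condition $\nu^{K,u}_0\in B(\bar\xi^u,\gamma)\cap M^K_F$ and over $K$: one must make sure the mass bound $M$ can be chosen uniformly (it can, since $\W(\nu^{K,u}_0,\bar\xi^u)<\gamma$ pins $\langle\nu^{K,u}_0,\one\rangle$ near $\langle\bar\xi^u,\one\rangle$, and the exponential tightness Lemma~\ref{lemma_tensionexpo} controls excursions of the mass with cost $\to+\infty$), and that $C_0,C_1$ in the displacement bounds depend only on $M,T$ and the fixed coefficient bounds. A secondary technical point is that $\nu^{K,u}$ is the restriction to trait $u$ of the full process, which also carries trait $v$ and possible further mutants; but the birth/death/diffusion dynamics of the $u$-marginal is of exactly the type covered by Theorems~\ref{theo_majoration} and~\ref{theo_functionalrate} (the $v$-population enters only through the competition term in the death rate, which is still bounded by $\bar cM$), so no new large-deviation input is needed. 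Finally, turning the event into a closed set for the Skorokhod topology and passing from $\sup_{t}\W(\nu_t,\nu_0)\ge\rho$ to the closure $\bar A$ in Theorem~\ref{theo_majoration} is routine, since the functional $\nu\mapsto\sup_{t\le T}\W(\nu_t,\nu_0)$ is lower semicontinuous on $\mathbb{D}$.
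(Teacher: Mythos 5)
The high-level plan (reduce to a rate-function estimate via Theorem~\ref{theo_majoration}) is reasonable, but your execution breaks in the step that is supposed to make the exponent arbitrarily negative, and the paper in fact proves the lemma by a completely different, more elementary route.

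The gap is in the ``upgrade'' step. From the triangle inequality and Proposition~\ref{prop_lowerboundI} you obtain, for $T$ below a threshold, a lower bound $I^{T}(\nu)\geq V_0(\rho,M)$ that is \emph{independent of $T$} (under your reading that $C(T,M)\leq C(1,M)$). To pass from the fixed exponent $-V_0$ to $-C$ for arbitrary $C$, you then enlarge the horizon to $T(C,\rho)=n\delta$ and claim the cost adds up to $nV_0$ over the $n$ subintervals. That is false: the event $\{\sup_{t\leq T}\W(\nu_t,\nu_0)\geq\rho\}$ forces a large displacement on \emph{some} subinterval, not on all of them, so Chasles/Markov only gives $I^{T}(\nu)\geq V_0$, not $nV_0$. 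Worse, enlarging $T$ makes the event \emph{easier}, so the infimum of $I^{T}$ over the event is non-increasing in $T$; the correct direction is to take $T$ \emph{small}, not large. What would salvage the approach is the observation that the constant $C(T,M)$ in Proposition~\ref{prop_lowerboundI} actually tends to $0$ as $T\to 0$ (the $C_2$ and $C_3$ of its proof both vanish), so that $I^{T}(\nu)\geq \rho/(2C(T,M))-\cdots\to+\infty$. But you asserted the opposite monotonicity, did not track this dependence, and leaned on the broken iteration instead; as written the argument only delivers a fixed negative exponent, not ``$<-C$ for all $C$.''

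For contrast, the paper does not pass through the rate function at all. It fixes a Lipschitz test function and decomposes $\langle\nu^{K,u}_t-\nu^{K,u}_0,f\rangle$ into the contribution of the $u$-individuals alive on $[0,t]$ (controlled by $\sum_i|X^i_t-X^i_0|$, since $f$ is $1$-Lipschitz) plus the contribution of those who died or were born (controlled by $(N^{\mathrm{dead}}_t+N^{\mathrm{born}}_t)/K$). After localizing the mass below $N$ via Lemma~\ref{lemma_tensionexpo}, it stochastically dominates the first part by $KN$ independent reflected diffusions and the second by a Poisson process of rate $(\bar b+\bar d+N\bar c)KN$. The diffusion part is handled by the Markov property and the Gaussian heat-kernel bound of \cite{wang_yan_2013}, giving an exponent of order $-K\bigl(\rho/(4\sqrt{T})-\ln(C_3)N\bigr)$; the Poisson part is handled by an exponential Markov inequality with an exponent of order $-K\bigl(\tfrac{\rho}{2}\log(\rho/(cT))+\cdots\bigr)$; both blow up as $T\to 0$, which is exactly what delivers ``$<-C$ for every $C$.'' This is a sharper, self-contained estimate that avoids needing the (unstated) quantitative $T$-dependence in Proposition~\ref{prop_lowerboundI}.
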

\noindent
This lemma means that there exists a small time interval during which any process stays close from its starting point with an exponentially high probability. The stochastic process includes jump parts and diffusive parts. Thus, we study not only the size of the population process during a small time interval, but also a sum of reflected diffusion processes.\\

Theorem~\ref{theo_exittimeintro} is proved using the two last lemmas \ref{lemma_exit2} and \ref{lemma_exit4}. We do not detail its proof as it can be adapted from \cite{dembo_zeitouni_1998}. The main change is that the proof has to be done on $\{t\leq T^K_\epsilon \wedge S_1^K\}$ to ensure that the $v$-process size $\langle \nu^{K,v},\one \rangle$ is small and that no other mutation appears, but Lemma~\ref{lemma_exit2} is sufficient to circumvent this difficulty.

\noindent
There remains to prove the three lemmas.
\begin{proof}[Proof of Lemma~\ref{lemma_exit1}]
 Remark that if $\W(\nu^{K,u}_0,\bar\xi^u)< \rho$ for all $K\geq 1$, the result is obvious.
 Otherwise, let us define the following set:
 \begin{equation}
 \label{eq_defA}
 \mathcal{A}(T)=adh \{ \nu\in \D([0,T],M_F(\CX \times \{u,v\})), \forall t \in [0,T], \W(\nu^u_t,\bar\xi^u)\in ]\rho,\gamma[ \text{ and } \langle\nu^v_t,\one \rangle < \epsilon \},
 \end{equation}
 where $adh S$ is the closure of the set $S$.
 Remark that $\{\nu^K\in \mathcal{A}(T)\}=\{\tau \wedge T^K_\epsilon \wedge S_1^K  > T\}$ a.s. and that the set $\CC=adh( B(\bar\xi^u, \gamma) \times B(0,\epsilon))$ is a compact set of $M_F(\CX\times \{u,v\})$ as $\CX$ is bounded. By applying Theorem~\ref{theo_majoration} with the closed set $\mathcal{A}(T)$ and the initial compact set $\CC$, we find
 \begin{equation*}
 \underset{K \to +\infty}{\limsup} \; \dfrac{1}{K} \log \underset{\nu_0^K \in \CC\cap M_F^K}{\sup}\; \Pb_{\nu^K_0}(\nu\in \mathcal{A}(T))\leq -\underset{\nu \in \mathcal{A}(T)}{\inf} I^T(\nu).
 \end{equation*}
 Thus, the lemma will be proved if we show that ${\inf}_{\nu \in \mathcal{A}(T)} I^T(\nu) {\rightarrow} +\infty$ as $T$ tends to $+\infty$. 
 To this aim, we will first show that any solution to \eqref{eq_theoapproxpop} cannot belong to $\mathcal{A}(T_0)$ if $T_0$ is large enough. Precisely, we set $\delta \in ]0, \rho/2[$, and we will prove that there exists $T_0>0$, such that any $(\xi_t)_{t\geq 0}$ solution to \eqref{eq_theoapproxpop} with an initial condition satisfying $\W(\xi^u_0,\bar\xi^u)\in]\rho,\gamma[$ and $\langle \xi^v_0,\one \rangle <\epsilon$ satisfies
 \begin{equation}
\label{eq_minnorm}
 \underset{\nu\in \mathcal{A}(T_0)}{\inf} \underset{t\in[0,T_0]}{\sup} \W(\xi_t,\nu_t)\geq \delta.
 \end{equation}

\noindent Assume that \eqref{eq_minnorm} holds. Since $\sup_{\nu\in \mathcal{A}(T_0)} \sup_{t\in [0,T_0]} \langle \nu,\one \rangle$ is bounded, we can use Proposition~\ref{prop_lowerboundI} to deduce that there exists $C>0$ such that for any $\nu\in \mathcal{A}(T_0)$, $\delta \leq C (I^{T_0}(\nu)+\sqrt{I^{T_0}(\nu)})$. As $x \mapsto x+\sqrt{x}$ is a bijective function from $\R^+$ to $\R^+$, we find a constant $C(\delta)>0$ which is a lower bound on $I^{T_0}(\nu)$ and 
$
\inf_{\nu\in \mathcal{A}(T_0)} I^{T_0}(\nu)\geq C(\delta).
$
 Finally, for $T>T_0$ and $\nu\in \mathcal{A}(T)$, we decompose $\nu$ as a sum of $n:=[T/T_0]$ terms~: using Chasles' Theorem and the non-variational formulation~\eqref{eq_theorate} of $I^T>0$, we find a sequence $(\nu^i)_{i=1..n}\in \mathcal{A}(T_0)$ such that 
 \begin{equation*}
 I^T(\nu)\geq \sum_{i=1}^n I^{T_0}(\nu^i) \geq C(\delta) n \underset{T \rightarrow +\infty}{\longrightarrow} +\infty.
 \end{equation*}

\indent
 There remains to prove that \eqref{eq_minnorm} holds.
Let $\W(\xi_0^u,\bar\xi^u)\in ]\rho,\gamma[$ and $\langle \xi^v_0,\one\rangle <\epsilon$, and let $(\xi_t)_{t\geq 0}$ be the solution to \eqref{eq_theoapproxpop} with initial condition $\xi_0=\xi^u_0\delta_u+\xi^v_0\delta_v$. Using Theorems 1.2 and 1.4 in \cite{leman_mirrahimi_meleard_2014}, we know that $\xi_t\in M_F(\CX\times\{u,v\})$ converges to a stationary state. Theorem~1.2 in~\cite{leman_mirrahimi_meleard_2014} implies that this stationary state is either $\bar\xi^u\delta_u$ or $\bar\xi^v \delta_v$ or a state with coexistence but the assumptions on $\gamma$ insure that those stationary states do not belong to $(B(\bar\xi^u,\gamma) \setminus B(\bar\xi^u,\rho)) \times B(0,\epsilon)$. Therefore, there exists $T_{\xi_0}$ such that 
 \begin{equation}
 \label{eq_Tdeter}
 \W(\xi^u_{T_{\xi_0}},\bar\xi^u)\not\in [ \gamma+2\delta, \rho-2\delta ] \text{ or } \langle\xi^v_{T_{\xi_0}},\one \rangle \geq \epsilon+2\delta.
 \end{equation}
However $T_{\xi_0}$ depends on $\xi_0$. Thus, we will use a compactness argument to find a uniform time and conclude.
 First, using Gronwall's Lemma, we obtain~: for $M\geq \langle \bar\xi^u,\one\rangle+2\gamma+2\epsilon$,
 \begin{equation}
 \label{eq_boundxione}
 \sup_{t\in[0,T_{\xi_0}]} \langle \xi_t, \one \rangle \leq \langle \xi_0, \one \rangle e^{\bar b T_{\xi_0}} \leq M e^{\bar b T_{\xi_0}}.
 \end{equation}
Then we show that two solutions $(\xi_t)_{t\geq 0}$ and $(\zeta_t)_{t\geq 0}$ to \eqref{eq_theoapproxpop} which are close initially stay close during a short time. Indeed, using the mild equation of Lemma 4.5 in~\cite{champagnat_meleard_2007}, for any $f \in C^{Lip}(\CX\times\{u,v\})$ 
\begin{equation*}
\langle \xi_t-\zeta_t, f \rangle =\langle \xi_0-\zeta_0,P_{t}f \rangle  + \int_0^t \langle \xi_s-\zeta_s,(b-d-c\cdot \xi_s)P_{t-s}f \rangle ds -\int_0^t \langle \xi_s, c\cdot(\xi_s-\zeta_s)P_{t-s}f\rangle ds.
\end{equation*}
 According to Lemma~\ref{lemma_lipschitz} and Assumptions~\ref{ass_coef}, 
 we can find a constant $C_1$ such that $(b-d-c\cdot \xi_t)P_{t-s}f/C_1$ and $c/C_1$ belong to $C^{Lip}(\CX\times \{u,v\})$. We deduce that for all $t\leq T_{\xi_0}$,
 \begin{equation}
\label{eq_minxizeta}
| \langle \xi_t-\zeta_t, \varphi \rangle| \leq \W(\xi_0,\zeta_0)  +C_1 \int_0^t \sup_{r\in[0,s]}\W( \xi_r,\zeta_r) ds +C_1 \sup_{r\in [0,T_{\xi_0}]}\langle \xi_r,\one \rangle  \int_0^t  \sup_{r\in [0,s]}\W(\xi_r,\zeta_r)ds .
 \end{equation}
 Using Gronwall's Lemma and \eqref{eq_boundxione}, we conclude that there exists a constant $C(T_{\xi_0})$ such that
 \begin{equation*}
 \sup_{r\in [0,T_{\xi_0}]}\W(\xi_r,\zeta_r) \leq C(T_{\xi_0}) \W(\xi_0,\zeta_0).
 \end{equation*}
 Choosing $\alpha_{\xi_0}=\delta/C(T_{\xi_0})$, we find for all $\zeta_0$ with $\W(\xi_0,\zeta_0)<\alpha_{\xi_0}$, $\sup_{r\in [0,T_{\xi_0}]}\W(\xi_r,\zeta_r)< \delta$. In addition with \eqref{eq_Tdeter} and~\eqref{eq_defA}, we find that for all $\zeta_0$ with $\W(\zeta_0,\xi_0)< \alpha_{\xi_0}$,
 \begin{equation*}
 \sup_{t\in [0,T_{\xi_0}]} \sup_{\nu\in \mathcal{A}(T_0)}\W(\zeta_t,\nu_t) \geq \delta.
 \end{equation*}
 Remark that $adh(( B(\bar\xi^u,\gamma)\setminus B(\bar\xi^u,\rho))\times B(0,\epsilon))$ is a compact set of $M_F(\CX\times \{u,v\})$ as $\CX$ is bounded. It is covered by $\cup_{\xi_0 \in (B(\bar\xi^u,\gamma)\setminus B(\bar\xi^u,\rho)\times B(0,\epsilon))} B(\xi_0,\alpha_{\xi_0})$. We extract a finite cover
$
 \cup_{i=1}^n B(\xi_0^i,\alpha_{\xi_0^i}).
 $
 Finally, defining $T_0=\max_{i=1..n} T_{\xi_0^i}$, we conclude~: for any $\xi_0$ with $\W(\xi_0^u,\bar\xi^u)\in ]\rho,\gamma[$ and $\langle\xi_0^v,\one\rangle <\epsilon$, we have
 \begin{equation*}
 \underset{\nu\in \mathcal{A}(T_0)}{\inf} \underset{t\in[0,T_0]}{\sup} \W(\xi_t,\nu_t) \geq \delta.
 \end{equation*}

\noindent Proof of Lemma~\ref{lemma_exit1} is now complete.
\end{proof}


\begin{proof}[Proof of Lemma~\ref{lemma_exit2}]
Lemma~\ref{lemma_exit1} gives $T_1$ such that 
\begin{equation}
\label{eq_inegalite1}
 \underset{K \to +\infty}{\limsup} \; \dfrac{1}{K} \log \underset{\nu_0^K \in (B(\bar{\xi}^u,\gamma)\times B(0,\epsilon))\cap M_F^K}{\sup}\; \Pb_{\nu^K_0}(\tau \wedge T^K_\epsilon \wedge S_1^K >T_1)\leq -1.
\end{equation}
Thus we limit our study to the time interval $[0,T_1]$. Since the considered initial states satisfy $\langle \nu^K_0 ,\one \rangle \leq (\langle \bar\xi^u ,\one \rangle +2\gamma+2\epsilon)$, using Lemma~\ref{lemma_tensionexpo}, we find $N>0$ such that
\begin{equation}
\label{eq_inegalite2}
 \underset{K \to +\infty}{\limsup} \; \dfrac{1}{K} \log \underset{\nu_0^K \in (B(\bar{\xi}^u,\gamma)\times B(0,\epsilon))\cap M_F^K}{\sup}\; \Pb_{\nu^K_0}(\sup_{t\in[0,T_1]}\langle \nu_t^K,\one \rangle \geq N)\leq -1.
\end{equation}
Let $M\geq (\langle \bar\xi^u ,\one \rangle +2\gamma+2\epsilon)\vee N$ and
\begin{multline}
\label{def_A}
 \mathcal{A}=\big\{ \nu\in \D([0,T_1],M_F(\CX\times \{u,v\}))\; |\;  \exists t \in [0,T_1], \W(\nu_t^u,\bar{\xi}^u)< \gamma, \sup_{t\in [0,T_1]} \langle\nu^v_t,\one \rangle < \epsilon  \\
  \text{ and } \sup_{t\in[0,T_1]} \langle\nu_t,\one\rangle < M \big\}.
\end{multline}
For all $\nu^K_0$, and $K$,
\begin{align*}
 \Pb_{\nu^K_0}(&\W(\nu^{K,u}_{\tau},\bar{\xi}^u) \geq  \gamma, \tau \leq T^K_\epsilon \wedge S_1^K) \\
 &\leq \Pb_{\nu^K_0}(\tau \leq T_1, \sup_{t\in[0,T_1]}\langle\nu^K_t,\one \rangle \geq M)+\Pb_{\nu^K_0}(\tau \leq T_1, \nu^K\in \mathcal{A})+\Pb_{\nu^K_0}(\tau>T_1, \tau \leq T^K_\epsilon\wedge S_1^K)\\
&\leq \Pb_{\nu^K_0}(\sup_{t\in[0,T_1]}\langle\nu^K_t,\one \rangle \geq M)+\Pb_{\nu^K_0}( \nu^K\in \mathcal{A})+\Pb_{\nu^K_0}(\tau \wedge T^K_\epsilon\wedge S_1^K>T_1).
\end{align*}
Then, we use Theorem~\ref{theo_majoration}, the definition of $\mathcal{A}$~\eqref{def_A}, \eqref{eq_inegalite1}, and \eqref{eq_inegalite2} to find
\begin{equation*}
\underset{K\to +\infty}{\limsup} \; \dfrac{1}{K} \; \log \underset{\nu^K_0 \in \CC \cap M^K_F}{\sup}\;\Pb_{\nu^K_0}(\nu^K_{\tau}\in B^c(\bar{\xi}^u,\gamma))  \leq \max\left\{ -1,-\inf_{\nu\in \mathcal{A}, \nu_0\in \CC} I^{T_1}(\nu) \right\},
\end{equation*}
where $\CC=adh(B(\bar{\xi}^u,3\rho)\times B(0,\epsilon))$. There remains to prove that the r.h.s. is strictly negative. Proposition~\ref{prop_stability} implies that any solution $(\xi_t)_{t\geq 0}$ to \eqref{eq_theoapproxpop} with $\xi_0\in \CC$ satisfies the property~:
$$
\text{if } \sup_{t\in[0,T_1]} \langle \xi^v_t,\one\rangle < \epsilon'=2\epsilon, \; \text{ then }  \W(\xi^u_t,\bar\xi^u) < \gamma/2.
$$
We deduce immediately that for any $\nu\in \mathcal{A}$, with $\nu_0\in \CC$, if $\xi$ is the solution to \eqref{eq_theoapproxpop} with $\xi_0 :=\nu_0$,
$$
\sup_{t\in [0,T_1]}\W(\nu_t,\xi_t)\geq \sup_{t\in[0,T_1]}\max\{ \W(\nu_t^u,\xi_t^u), \W(\nu_t^v,\xi_t^v)  \} \geq \frac{\gamma}{2} \wedge \epsilon.
$$
We conclude the proof using Proposition~\ref{prop_lowerboundI} and a compactness argument similar to the one at the end of the previous proof.
\end{proof}

\begin{proof}[Proof of Lemma~\ref{lemma_exit4}]
 Let us fix $f\in C^{Lip}(\CX)$ and study the following difference, using the construction of the process $\nu^{K,u}$:
\begin{eqnarray*}
 |\langle \nu^{K,u}_{t},f\rangle -\langle \nu^{K,u}_{0}, f \rangle | 
 \leq  \dfrac{1}{K} \left( \sum_{i \in N^{\text{notdead}}_t} |f(X^i_{t})-f(X^i_0)| +\sum_{i \in N^{\text{dead}}_t} |f(X^i_0)| +\sum_{i\in N^{\text{born}}_t} |f(X^i_t)| \right),
\end{eqnarray*}
where $N^{\text{notdead}}_t$ is the set of indices of the individuals with trait $u$ alive at time $0$ and not dead during $[0,t]$; $N^{\text{dead}}_t$ represents the set of indices of the individuals with trait $u$ alive at time $0$ and dead during $[0,t]$; and $N^{\text{born}}_t$ is the set of indices of the individuals with trait $u$ born during $]0,t]$. As $f\in C^{Lip}(\CX)$,
\begin{eqnarray*}
|\langle \nu^{K,u}_{t},f\rangle -\langle \nu^{K,u}_{0}, f \rangle | 
 &\leq & \dfrac{1}{K} \left( \sum_{i \in N^{\text{notdead}}_t} |X^i_{t}-X^i_0|\right) +\dfrac{N^{\text{dead}}_t+N^{\text{born}}_t}{K}.
\end{eqnarray*}
As this is true for all $f\in C^{Lip}(\CX)$, we find the same upper bound for $\W(\nu_t^{K,u},\nu_0^{K,u})$.
We use now the stopping time 
$$
\tau^K_N=\inf\{ t>0, |\langle \nu^{K,u}_t,\one \rangle| >N \}.
$$
On $\{\tau_N^K\geq t\}$, $\W(\nu_t^{K,u},\nu_0^{K,u})$ is stochastically bounded by
$
\frac{1}{K} \sum_i^{KN} |X^i_t-X^i_0|+\frac{\mathcal{P}(t)}{K},
$
where $\{(X^i_t)_{t\geq 0}\}_{i\in\{1..KN\}}$ are $KN$ independent reflected diffusion processes driven by Equation~\eqref{eq_brownien} with the diffusion coefficient $m^u$ and $(\mathcal{P}(t))_{t\geq 0}$ is a Poisson process with intensity $(\bar{b}+\bar{d}+N\bar{c})KN$. Finally,
\begin{equation}
\label{eq_3termesbis}
\begin{aligned}
\Pb\Big(\underset{t\in[0,T]}{\sup}& \W(\nu^{K,u}_t,\nu^{K,u}_0) \geq \rho\Big)  \\
& \leq  \Pb(\tau^K_N\leq T)+\Pb\Big(\tau_N^K \geq T, \underset{t\in[0,T]}{\sup} \W(\nu^{K,u}_t,\nu^{K,u}_0)\geq  \rho\Big),\\
&\leq  \Pb(\tau^K_N\leq T)+\Pb\Big(\frac{1}{K}\sum_{i=0}^{KN}\underset{t\in[0,T]}{\sup} |X^i_t-X^i_0|\geq \frac{\rho}{2}\Big)+\Pb\Big( \underset{t\in[0,T]}{\sup} \frac{\mathcal{P}(t)}{K} \geq \frac{\rho}{2}\Big),\\
\end{aligned}
\end{equation}
Using Lemma~\ref{lemma_tensionexpo}, we can fix $N\in \N$ such that 
\begin{equation*}
\underset{K\to +\infty}{\limsup} \frac{1}{K}\log\Pb(\tau^K_N\leq T)\leq -C.
\end{equation*}
\noindent
Let us now consider the second term of \eqref{eq_3termesbis}. 
For $\bar x=(x^1,..,x^{KN}) \in \CX^{KN}$, we denote the probability under which $(X^1_0,..,X^{KN}_0)$ is equal to $\bar x$ by $\Pb_{\bar x}$. 
Let $\Upsilon$ be the stopping time $\Upsilon=\inf\{s\geq 0, \sum_{i=0}^{KN} |X^i_s-X^i_0|\geq \rho K/2 \}$. Using the Markov property, we find
\begin{equation}
\label{eq_sup}
 \begin{aligned}
  \Pb_{\bar x }\Big(&\underset{t\in[0,T]}{\sup} \sum_{i=0}^{KN} |X^i_t-X^i_0|\geq \frac{\rho K}{2}\Big)=\Pb_{\bar x }(\Upsilon \leq T)\\
  &\leq \Pb_{\bar x }\Big(\Upsilon \leq T,\sum_{i=0}^{KN} |X^i_T-X^i_0|\geq \frac{\rho K}{4}\Big) +\Pb_{\bar x }\Big(\Upsilon \leq T,  \sum_{i=1}^{KN}|X^i_T-X^i_{\Upsilon}|\geq \frac{\rho K}{4}\Big)\\
  &\leq \Pb_{\bar x }\Big(\sum_{i=0}^{KN} |X^i_T-X^i_0|\geq \frac{\rho K}{4}\Big) + \E_{\bar x} \Big[ \int_0^T \Pb_{\bar X_s }\Big(\sum_{i=1}^{KN} |X^i_{T-s}-X^i_{0}|\geq \frac{\rho K}{4}\Big) \one_{\Upsilon\in ds}\Big]\\
  & \leq 2 \underset{\bar y \in \CX^{KN}, s\in [0,T]}{\sup} \Pb_{\bar y }\Big(\sum_{i=0}^{KN} |X^i_s-X^i_0|\geq \frac{\rho K}{4}\Big).
 \end{aligned}
\end{equation}
The aim is thus to find an upper bound on the last term for any $\bar x\in \CX^{KN}$ and any $s\in [0,T]$. Using Markov's inequality,
\begin{equation}
\label{eq_markovineq}
 \Pb_{\bar x }\Big(\sum_{i=0}^{KN} |X^i_s-X^i_0|\geq \frac{\rho K}{4}\Big)\leq e^{-\frac{K\rho}{4\sqrt{T}}} \prod_{i=1}^{KN} \E_{x^i}\Big[ e^{\frac{|X^i_s-x^i|}{\sqrt{s}}}\Big].
\end{equation}
If we denote the kernel of the semigroup $P^u_s$ of the reflected diffusion process by $p^u_s(x,y)$, Part 3 in~\cite{wang_yan_2013} and the fact that $\bar \CX$ is compact imply that there exist two positive constants $C_1, C_2$ such that for any $x,y\in \CX$,
\begin{equation}
\label{eq_kernel}
 p^u_s(x,y) \leq \frac{C_1}{s^{d/2}} e^{-\frac{|x-y|^2}{C_2 s}}.
\end{equation}
Thus, using~\eqref{eq_kernel} then a change of variables, we find that there exists $C_3>0$ independent from $s$ such that 
\begin{equation}
\label{eq_boundEexp}
\begin{aligned}
 \E_{x^i}\Big[ e^{\frac{|X^i_s-x^i|}{\sqrt{s}}}\Big] \leq \int_{\R^d} \frac{C_1}{s^{d/2}} e^{-\frac{|x-y|^2}{C_2 s}}e^{\frac{|y-x|}{\sqrt{s}}}dy
  \leq \int_{\R^d} C_1 e^{-\frac{|z|^2}{C_2 }}e^{|z|}dz =C_3 <+\infty.
 \end{aligned}
\end{equation}
We deduce with the last line in~\eqref{eq_sup}, \eqref{eq_markovineq} and \eqref{eq_boundEexp} that
\begin{equation*}
 \Pb_{\bar x }\Big(\underset{t\in[0,T]}{\sup} \sum_{i=0}^{KN} |X^i_t-X^i_0|\geq \frac{\rho K}{2}\Big) \leq 2e^{-K\left(\frac{\rho}{4\sqrt{T}}-\ln(C_3) N\right)},
\end{equation*}
where $\frac{\rho}{4\sqrt{T}}-\ln(C_3) N$ tends to $+\infty$ when $T$ tends to $0$, i.e. there exists $T_1$ such that for all $T\leq T_1$, $\frac{\rho}{4\sqrt{T}}-\ln(C_4) N\geq C$.\\
\noindent
Finally, concerning the third term of \eqref{eq_3termesbis}, let $r=\log \left(\frac{\rho}{2(\bar{b}+\bar{d}+\bar{c}N)NT}\right)$,
\begin{eqnarray*}
\Pb\Big( \underset{t\in[0,T]}{\sup} \frac{\mathcal{P}(t)}{K} \geq \frac{\rho}{2}\Big) & \leq & \Pb\left( e^{r \mathcal{P}(T)} \geq e^{r\frac{K\rho}{2}}\right)\\
& \leq & e^{-\frac{\rho r K}{2}}\E [e^{r\mathcal{P}(T)}]\\
& \leq & e^{ \left( - K \left( \frac{\rho r}{2}-(\bar{b}+\bar{d}+\bar{c}N)NT(e^r-1) \right)\right)},\\
& \leq& e^{ \left( -K \left( \frac{\rho}{2} \left[\log \left(\frac{\rho}{2(\bar{b}+\bar{d}+\bar{c}N)NT}\right)-1\right]+(\bar{b}+\bar{d}+\bar{c}N)NT \right) \right)}.
\end{eqnarray*}
There exists $T_2\leq T_1$ such that for all $T\leq T_2$, $\frac{\rho}{2} \left[\log \left(\frac{\rho}{2(\bar{b}+\bar{d}+\bar{c}N)NT}\right)-1\right]+(\bar{b}+\bar{d}+\bar{c}N)NT\geq C$. 
We conclude the proof~: for all $T \leq T_2$,
\begin{equation*}
\underset{K\to+\infty}{\limsup}\; \frac{1}{K} \log \Pb(\underset{t\in[0,T]}{\sup} \W(\nu^{K,u}_T,\nu^{K,u}_0) \geq  \rho)\leq -C.
\end{equation*}
\end{proof}

\section{Survival probability for a branching diffusion process}
\label{sec_survivalproba}

In this part, we study a distinct model which is a branching diffusion process. This model is correlated with the other one as explained at the end of Section~\ref{sec_theo}.
Any individual is characterized by its location $X^i_t\in \CX$ which is described as before by a diffusion process normally reflected at the boundary of $ \CX$, \eqref{eq_brownien}, with the diffusion coefficient $m>0$. Moreover, each individual with location $x\in \CX$ gives birth to a new individual at rate $b(x)$ and dies at rate $d(x)$. Those rates are assumed to have some regularity that we detail.
\begin{assumption}
\label{hyp_bd}
 $b,d$ are two Lipschitz functions, $b$ is a positive function and there exists $\underbar{b}, \bar{b}, \bar{d}$ such that for all $x\in\CX$, $\underbar b <b(x)\leq \bar{b}$ and $0 \leq d(x)\leq \bar{d}$. Moreover, $d$ is a non-zero function.
\end{assumption}
\noindent
Let $M_t$ denote the number of individuals at time $t$. We describe the dynamics of the diffusion process at each time by the finite measure 
$$
 \eta_t=\sum_{i=1}^{M_t}\delta_{X^i_t}.
$$
The aim of this part is to describe the survival probability of the population using its parameters. Let
\begin{equation*}
  \Upsilon_0=\inf \{ t\geq 0, M_t=0\}.
\end{equation*}

\noindent
The first Theorem concerns the survival probability of the population assuming that, initially, there is only one individual at location $x$. This probability is characterized as a solution to an elliptic differential equation on the location space $\CX$. Remark that the location of the first individual plays a main role. Indeed, if the first individual appears in a place where the growth is low or negative, it has a high probability to die with no descendants.\\
We denote the probability measure under which $\eta_0=\delta_x$ by $\Pb_{\de_x}$.
\begin{theorem}
\label{theo_survieinf}
Let $H$ be the principal eigenvalue of the elliptic operator $m\Delta_x .+(b-d).$ with Neumann boundary conditions on $\CX$, see \eqref{eq_defH}. 
If $H>0$, there exists a unique positive $C^2$-solution $\phi^*$ to the elliptic equation
\begin{equation}
\label{eq_wstar}
 \left\{
\begin{aligned}
 &0=m\Delta_x \phi^*(x)+(b(x)-d(x))\phi^*(x)-b(x)\phi^*(x)^2, \; \forall x\in \CX,\\
 &\p_n \phi^*(x)=0, \; \forall x\in \p\CX,\\
\end{aligned}
\right.
\end{equation}
and $\phi^*(x)=\lim_{t\to \infty} \Pb_{\delta_x}[\Upsilon_0\geq t]$ for all $x\in \CX$.\\
If $H\leq 0$, \eqref{eq_wstar} has no non negative solution, we set $\phi^*\equiv 0$, and $\lim_{t\to \infty} \Pb_{\delta_x}[\Upsilon_0\geq t]=0$ for all $x\in \CX$.
\end{theorem}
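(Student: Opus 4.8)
The plan is to identify the extinction-by-time-$t$ function $w(t,x):=\Pbx[\Upsilon_0\le t]=\Pbx[M_t=0]$ with the solution of a semilinear heat equation, and then pass to the limit $t\to\infty$. Since $b\le\bar b$, the process $(M_t)$ is stochastically dominated by a Yule process, hence is non-explosive and the branching property applies. Conditioning the initial particle on its first birth-or-death time $T_1$ (whose hazard rate along the reflected trajectory $X$ started at $x$ is $(b+d)(X_s)$, with $X$ having generator $m\Delta_x$ and a Neumann condition at $\p\CX$) yields the renewal identity
\begin{equation*}
w(t,x)=\E_x\!\left[\one_{\{T_1\le t\}}\left(\frac{d(X_{T_1})}{(b+d)(X_{T_1})}+\frac{b(X_{T_1})}{(b+d)(X_{T_1})}\,w(t-T_1,X_{T_1})^2\right)\right].
\end{equation*}
The usual infinitesimal expansion (the Neumann condition killing the boundary contribution of the reflection term) then shows that $w$ is a classical bounded solution of
\begin{equation*}
\p_t w=m\Delta_x w+b\,w^2-(b+d)\,w+d,\qquad \p_n w=0\ \text{on }\p\CX,\qquad w(0,\cdot)\equiv 0 .
\end{equation*}
On the range $[0,1]$ the nonlinearity $w\mapsto bw^2-(b+d)w+d$ is Lipschitz, so $w$ is the unique bounded solution and a parabolic comparison principle is available for this equation; equivalently, $\phi(t,x):=1-w(t,x)$ solves $\p_t\phi=m\Delta_x\phi+(b-d)\phi-b\phi^2$ with $\phi(0,\cdot)\equiv 1$ and $\p_n\phi=0$.

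Since $\{\Upsilon_0\le t\}$ increases with $t$ and $0\le w\le 1$, the limit $w_\infty(x)=\lim_{t\to\infty}w(t,x)$ exists pointwise and $\lim_{t\to\infty}\Pbx[\Upsilon_0\ge t]=1-w_\infty(x)$. Interior and up-to-the-boundary parabolic Schauder estimates — valid because $\CX$ has a $C^2$ boundary and the coefficients are Lipschitz — are uniform in $t\in[1,\infty)$, so $w(t,\cdot)\to w_\infty$ in $C^2(\bar\CX)$ while $\p_t w(t,\cdot)\to 0$. Hence $w_\infty\in C^2(\CX)$ solves the stationary problem and, equivalently, $\phi_\infty:=1-w_\infty\ge 0$ is a $C^2$ solution of \eqref{eq_wstar} (the substitution $w=1-\phi$ turns $m\Delta_x w+bw^2-(b+d)w+d$ into $-[m\Delta_x\phi+(b-d)\phi-b\phi^2]$).

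It remains to analyse the elliptic problem \eqref{eq_wstar}, a standard logistic stationary equation, the relevant facts being essentially those used in \cite{leman_mirrahimi_meleard_2014}. Let $g_0>0$ denote a principal eigenfunction of $m\Delta_x\cdot+(b-d)\cdot$ with Neumann conditions, associated with the eigenvalue $H$ of \eqref{eq_defH}; any nonnegative nontrivial solution of \eqref{eq_wstar} is positive, by the strong maximum principle and the Hopf lemma. If $H\le 0$: pairing \eqref{eq_wstar} with $g_0$ and integrating by parts (both Neumann conditions making the boundary terms vanish) gives $H\int_\CX\phi g_0\,dx=\int_\CX b\,\phi^2 g_0\,dx$, whose left side is $\le 0$ and right side $\ge 0$, forcing $\phi\equiv 0$; thus $\phi_\infty\equiv 0$, i.e. $\lim_{t\to\infty}\Pbx[\Upsilon_0\ge t]=0$, and \eqref{eq_wstar} has no positive solution. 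If $H>0$: $\varepsilon g_0$ is a subsolution once $\varepsilon g_0\le 1$ and $\varepsilon\le H/(\bar b\sup_\CX g_0)$ (because $m\Delta_x(\varepsilon g_0)+(b-d)(\varepsilon g_0)-b(\varepsilon g_0)^2=\varepsilon g_0(H-b\varepsilon g_0)\ge 0$), while the constant $1$ is a supersolution (since $(b-d)\cdot 1-b\cdot 1^2=-d\le 0$); monotone iteration between them produces a solution $\phi^*$ with $0<\varepsilon g_0\le\phi^*\le 1$, and uniqueness among positive solutions follows from a Brezis--Oswald argument since $s\mapsto(b-d)-bs$ is strictly decreasing. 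Finally, $1-\phi^*$ is a stationary solution of the $w$-equation with $1-\phi^*\ge 0=w(0,\cdot)$, so the comparison principle yields $w(t,\cdot)\le 1-\phi^*$ for all $t$, whence $\phi_\infty=1-w_\infty\ge\phi^*>0$; being a positive solution of \eqref{eq_wstar}, $\phi_\infty$ must coincide with $\phi^*$ by uniqueness, i.e. $\phi^*(x)=\lim_{t\to\infty}\Pbx[\Upsilon_0\ge t]$.

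The main obstacle I expect is the first paragraph: rigorously justifying the branching/first-event decomposition for the interacting family of reflected diffusions, the Feynman--Kac representation encoding the Neumann boundary condition, and the regularity of the reflected-diffusion semigroup near the $C^2$ boundary, and then obtaining the uniform-in-$t$ parabolic estimates needed to pass to the limit $t\to\infty$ and land in $C^2$. The purely elliptic part (existence, positivity, uniqueness of $\phi^*$) is classical and already invoked elsewhere in the paper, so it can be treated briefly.
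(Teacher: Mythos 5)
Your proof is correct and follows the same overall skeleton as the paper's (renewal equation for the extinction probability, identification with a semilinear parabolic problem, long-time elliptic limit), but the two differ in technique at each step in ways worth spelling out. For the parabolic identification, the paper goes \emph{PDE-to-probability}: it first constructs a $C^{2,1}$ solution $f$ to the equation by super/sub-solutions (constants $0$ and $1$, citing a theorem of Roques), and then applies It\^o's formula to $f(X_s,t-s)e^{-I(s)}$ to verify that $f$ satisfies the Feynman--Kac fixed-point identity, whose uniqueness (via Gronwall) gives $f=g$. You instead go \emph{probability-to-PDE}: you assert that $w(t,x)=\Pbx[\Upsilon_0\le t]$ is itself a classical solution by an ``infinitesimal expansion.'' That direction requires proving $C^{2,1}$ regularity of a function defined as a probability, which is the less standard step — you flag it yourself as the main obstacle, rightly so; the paper's order of operations is designed precisely to avoid having to establish regularity of $w$ from scratch. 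For the $t\to\infty$ limit, the paper simply cites Berestycki--Hamel--Roques and Roques' book (Theorems 9 and 11 of Ch.~III) for convergence of $\phi(t,\cdot)=1-f(\cdot,t)$ to $\phi^*$ in $C^2(\bar\CX)$, whereas you give a self-contained argument: probabilistic monotonicity of $t\mapsto w(t,x)$, uniform-in-$t$ interior and boundary parabolic Schauder estimates to upgrade pointwise convergence to $C^2$ and kill $\p_t w$, then the elliptic dichotomy by pairing with the principal eigenfunction $g_0$ for $H\le 0$ and by the $\varepsilon g_0$ sub-solution / constant $1$ super-solution plus Brezis--Oswald uniqueness for $H>0$, followed by a comparison $w(t,\cdot)\le 1-\phi^*$ to identify the limit. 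Your route buys self-containment (no reliance on the reaction-diffusion convergence literature) at the cost of carrying the Schauder machinery; the paper's route is shorter but more citation-dependent, and sidesteps the regularity question by the It\^o identification. Both are valid; the one genuine soft spot in your version is the ``usual infinitesimal expansion'' step, which a referee would ask you to make precise (most naturally by swapping to the paper's construct-then-identify order).
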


\indent
The second result of this part estimates the probability that the population size is of order $K$ after a logarithmic time $\log K$.
For all $\epsilon>0$ and $K \in \N^*$, we set
\begin{equation*}
 \Upsilon_{\epsilon K}=\inf\{ t \geq 0, \langle \eta_t, \one \rangle \geq \epsilon K \}.
\end{equation*}

\begin{theorem}
\label{theo_explosion}
 Let $\epsilon>0$ and $(t_K)_{K>0}$ be a sequence of times such that
$
 \lim_{K\to +\infty} t_K/\log( K)=+\infty.
$
Then for all $x\in \CX$,
\begin{equation*}
 \lim_{K \to +\infty} \Pb_{\delta_x}[\Upsilon_{\epsilon K} <t_K]=\phi^*(x).
\end{equation*}
\end{theorem}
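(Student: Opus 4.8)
The plan is to establish the two matching bounds $\limsup_{K}\Pb_{\delta_x}[\Upsilon_{\epsilon K}<t_K]\le\phi^*(x)$ and $\liminf_{K}\Pb_{\delta_x}[\Upsilon_{\epsilon K}<t_K]\ge\phi^*(x)$. Throughout, write $M_t=\langle\eta_t,\one\rangle$, let $\mathcal{A}=m\Delta_x+(b-d)$ with Neumann boundary conditions, let $H$ be its principal eigenvalue as in \eqref{eq_defH}, and let $g$ be a principal eigenfunction chosen positive; as in Definition~\ref{def_equilibrium} one has $g\in C^1(\bar\CX)$, so on the compact set $\bar\CX$ there are constants $0<c_0\le g\le c_1<+\infty$, whence the two-sided comparison $c_1^{-1}\langle\eta_t,g\rangle\le M_t\le c_0^{-1}\langle\eta_t,g\rangle$. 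The many-to-one formula for the branching diffusion gives $\E_{\delta_x}[\langle\eta_t,f\rangle]=(e^{t\mathcal{A}}f)(x)$ for bounded $f$, in particular $\E_{\delta_x}[\langle\eta_t,g\rangle]=e^{Ht}g(x)$ (equivalently, $e^{-Ht}\langle\eta_t,g\rangle$ is the usual additive martingale).

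For the upper bound I would simply bound $\Pb_{\delta_x}[\Upsilon_{\epsilon K}<t_K]\le\Pb_{\delta_x}[\sup_{t\ge0}M_t\ge\epsilon K]$, which decreases, as $K\to+\infty$, to $\Pb_{\delta_x}[\sup_{t\ge0}M_t=+\infty]$. On $\{\Upsilon_0<+\infty\}$ there are only finitely many birth events on the finite lifetime $[0,\Upsilon_0]$, so $\{\sup_tM_t=+\infty\}\subseteq\{\Upsilon_0=+\infty\}$; hence $\Pb_{\delta_x}[\sup_tM_t=+\infty]\le\Pb_{\delta_x}[\Upsilon_0=+\infty]=\lim_{t\to\infty}\Pb_{\delta_x}[\Upsilon_0\ge t]=\phi^*(x)$ by Theorem~\ref{theo_survieinf}. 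This already proves the theorem when $H\le0$, since then $\phi^*\equiv0$.

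For the lower bound we are in the case $H>0$; fix $\delta>0$. \emph{Step A.} With $\tau_m:=\inf\{t\ge0:M_t\ge m\}$, one has $\{\Upsilon_0=+\infty\}\subseteq\{\tau_m<+\infty\}$ for every $m$: a branching diffusion that never dies out must reach every level, because while the population stays bounded it returns to size one infinitely often, and from size one it goes extinct within a bounded time with probability bounded below (using that $d$ is not the zero function and the reflected diffusion is non-degenerate on the bounded set $\CX$). Thus $\Pb_{\delta_x}[\tau_m<+\infty]\ge\phi^*(x)$, and letting $t_0\uparrow+\infty$ we may fix $m_0$ and then $t_0$ with $\Pb_{\delta_x}[\tau_{m_0}\le t_0]\ge\phi^*(x)-\delta$. \emph{Step B.} I would then prove the second-moment estimate $\sup_{x\in\CX}\E_{\delta_x}[\langle\eta_t,g\rangle^2]\le C'e^{2Ht}$ for all $t\ge0$: the second moment $v(t,x)$ solves $\partial_tv=\mathcal{A}v+2b\,g^2e^{2Ht}$ with $v(0,\cdot)=g^2$, and Duhamel's formula together with the sup-norm spectral-gap bound $\|e^{r\mathcal{A}}\|_{L^\infty\to L^\infty}\le Ce^{Hr}$ gives the claim. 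By the branching property, for any configuration $\eta_0$ with at least $m_0$ particles, $\mathrm{Var}_{\eta_0}(\langle\eta_t,g\rangle)=\sum_j\mathrm{Var}_{\delta_{x_j}}(\langle\eta_t,g\rangle)\le M_0C'e^{2Ht}$ while $\E_{\eta_0}[\langle\eta_t,g\rangle]=e^{Ht}\langle\eta_0,g\rangle\ge c_0M_0e^{Ht}$, so Chebyshev's inequality with $t=(2/H)\log K$ yields
\[
\Pb_{\eta_0}\!\Big[\langle\eta_t,g\rangle<\tfrac12 c_0M_0K^2\Big]\le\frac{4C'}{c_0^2M_0}\le\frac{4C'}{c_0^2m_0}.
\]
On the complementary event $M_t\ge(c_0m_0/2c_1)K^2\ge\epsilon K$ once $K$ is large, i.e. $\Upsilon_{\epsilon K}\le(2/H)\log K$. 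Applying the strong Markov property at $\tau_{m_0}$ and using that $t_0+(2/H)\log K<t_K$ for $K$ large (because $t_K/\log K\to+\infty$), we get
\[
\Pb_{\delta_x}[\Upsilon_{\epsilon K}<t_K]\ge\Pb_{\delta_x}[\tau_{m_0}\le t_0]\Big(1-\tfrac{4C'}{c_0^2m_0}\Big)\ge(\phi^*(x)-\delta)\Big(1-\tfrac{4C'}{c_0^2m_0}\Big).
\]
Choosing $m_0$ large and then letting $\delta\to0$ gives $\liminf_K\Pb_{\delta_x}[\Upsilon_{\epsilon K}<t_K]\ge\phi^*(x)$, which combined with the upper bound proves the theorem.

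The two non-routine inputs are the main obstacles. The first is the second-moment bound with the \emph{sharp} exponential rate $e^{2Ht}$: the crude estimate coming from $\|b-d\|_\infty$ is not enough when $\|b-d\|_\infty>2H$, so one genuinely needs the $L^\infty$ decay estimate for $e^{t\mathcal{A}}$ governed by the principal eigenvalue $H$, obtained for instance from ultracontractivity of the reflected heat semigroup combined with the $L^2$ spectral gap. The second is the classical but not entirely trivial "survival implies unbounded growth" statement used in Step A, which here requires a short argument exploiting the non-degeneracy of the reflected diffusion on the bounded domain $\CX$ and the assumption that $d\not\equiv0$. Everything else reduces to Chebyshev's inequality and the strong Markov property.
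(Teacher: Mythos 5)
Your proposal is correct, but it takes a genuinely different route from the paper's proof, and it is worth spelling out the difference. The paper's argument is built around the additive martingale $W_t:=e^{-Ht}\langle\eta_t,h\rangle$: it shows the a.s.\ limit $W$ satisfies $\{W>0\}=\{\Upsilon_0=+\infty\}$ by proving that $x\mapsto\Pb_{\delta_x}[W>0]$ solves the same fixed-point equation~\eqref{eq_wsto} as the survival probability, that this equation has at most one non-zero solution in $[0,1]$ (a sub/super-solution comparison argument), and that $\phi^*$ solves it. Then on $\{W>0\}$ the observable $\langle\eta_t,h\rangle$ is asymptotically $We^{Ht}$, so $\Upsilon_{\epsilon K}/\log(\epsilon K)$ is a.s.\ finite and the conclusion follows once the first two terms of a three-term split (using the intermediate scale $\log\log K$ and a pure-birth comparison) are sent to zero. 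Your proof dispenses entirely with the analytic identification of $\Pb[W>0]$ and the intermediate scale: the upper bound is the direct monotone limit $\Pb[\Upsilon_{\epsilon K}<t_K]\le\Pb[\sup_tM_t\ge\epsilon K]\downarrow\Pb[\sup_tM_t=+\infty]\le\Pb[\Upsilon_0=+\infty]$, and the lower bound replaces the martingale argument by the dichotomy of Step~A (survival implies reaching any level) followed by the one-step second-moment/Chebyshev bound of Step~B. Both proofs do rely on the same hard technical input --- a second-moment estimate of the form $\E_{\delta_x}[\langle\eta_t,g\rangle^2]\le C'e^{2Ht}$ (in the paper this is the $L^2$-boundedness of the martingale, stated as ``easy to check by It\^o''), which requires the $L^\infty$ spectral estimate $\|e^{r\mathcal{A}}\|_{L^\infty\to L^\infty}\le Ce^{Hr}$ you highlight. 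Your version trades the paper's delicate uniqueness analysis for the probabilistic dichotomy of Step~A; it is shorter and more elementary once that dichotomy is granted, and it isolates more cleanly where the principal eigenvalue $H$ enters. One small remark on Step~A: the phrase ``returns to size one infinitely often'' is not needed and not quite right; the standard argument is that from any configuration of at most $m$ particles extinction occurs within a fixed time $T$ with probability at least some $\delta(m)>0$ (send a particle into the region where $d>0$ and block all births on a bounded window), so on $\{\sup_tM_t\le m\}$ extinction happens a.s.\ by iterating the Markov property --- that is exactly the fact you need and it avoids any claim about returning to size one.
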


The end of this part is devoted to the two proofs.

\begin{proof}[Proof of Theorem \ref{theo_survieinf}]

We first study the behavior of the probability $\Pb_{\delta_x}[\Upsilon_0\leq t]$. We denote the time of the first event (birth or death) of the population by $E_1$. The law of $E_1$ is given by $\Pb_{\de_x}[E_1\leq t]=\E_{\de_x}\bigg[ \int_0^t (d(X_s)+ b(X_s)) e^{-\int_0^s (b(X_r)+d(X_r))dr} ds \bigg]$. We set $I(s):=\int_0^s (b(X_r)+d(X_r))dr$. Using the Markov property of $\eta$, we obtain
\begin{eqnarray*}
 \Pb_{\de_x}[\Upsilon_0\leq t] &=& \E_{\de_x}\Big[ \one_{E_1\leq t} \one_{\{M_{E_1}=0\}} +\one_{E_1 \leq t } \one_{\{M_{E_1}=2\}} \E_{2\de_{X_{E_1}}}[\one_{M_{t-E_1}=0}] \Big]\\
 &=& \E_x \bigg[ \int_0^t \Big(d(X_s)+ b(X_s)\Pb_{\de_{X_{s}}}[\Upsilon_0\leq t-s]^2\Big) e^{-I(s)} ds \bigg],
\end{eqnarray*}
where $X$ under $\Pb_x$ is solution to \eqref{eq_brownien} with the initial condition $x$ and the diffusion coefficient $m$.
Thus $g(x,t)=\Pb_{\de_x}[\Upsilon_0\leq t]$ satisfies for all $x\in \CX$, and all $t>0$,
\begin{equation}
\label{eq_feynmankac}
\left\{
\begin{aligned}
 &g(x,t)=\E_x \bigg[ \int_0^t \Big( d(X_s)+ b(X_s)g(X_s,t-s)^2 \Big) e^{-I(s)} ds \bigg],\;\forall (x,t) \in \CX\times \R^+,\\
&g(x,0)=0,\; \forall x\in \CX.
\end{aligned}
\right.
\end{equation}
 Using Gronwall's Lemma for bounded functions, we deduce immediately that \eqref{eq_feynmankac} has a unique bounded solution.\\
\indent We will now show that there exists a unique $C^2$-solution to 
\begin{equation}
\label{eq_v}
 \left\{
\begin{aligned}
 &\p_t f(x,t)=m\Delta_x f(x,t)-\big(b(x)+d(x)\big)f(x,t)+d(x)+b(x)f(x,t)^2, \; \forall (x,t)\in \CX\times \R^+,\\
 &\p_n f(x,t)=0, \; \forall (x,t)\in \p\CX\times \R^+\\
 &f(x,0)=0, \; \forall x\in \CX,
\end{aligned}
\right.
\end{equation}
such that $f\in C^{2,1}(\CX \times \R^+)$, is positive and is smaller than $1$ by using super- and sub-solutions arguments. Indeed, let $F(x,f)=-\big(b(x)+d(x)\big)f+d(x)+b(x) f^2$.
We easily see that $\underline{f}\equiv 0$ and $\bar{f}\equiv1$ satisfy:
\begin{eqnarray*}
& \left\{
\begin{aligned}
 &\p_t \underline{f}\leq m\Delta_x \underline{f}+F(x,\underline{f}), \; \forall (x,t)\in \CX \times \R^+,\\
&\p_t \bar{f}\geq m\Delta_x \bar{f}+F(x,\bar{f}), \; \forall (x,t)\in \CX \times \R^+,
\end{aligned}
\right. \\
& \underline{f}(x,0)\leq f(x,0) \leq \bar{f}(x,0), x\in \CX,\\
& \p_n\underline{f}(x,t)\leq 0 \leq \p_n\bar{f}(x,0), x\in \p\CX, t \in \R^+.
\end{eqnarray*}
That is, $\underline{f}$ (resp. $\bar{f}$) is a sub-solution (resp. super-solution) to \eqref{eq_v}. Moreover, $F$ and $\p_fF$ belong to $ C(\CX\times \R)$ and $F$ is a Lipschitz function with respect to $x$ by means of Assumptions~\ref{hyp_bd}. We apply Theorem 4 of Chapter III of \cite{roques_2013} to deduce that \eqref{eq_v} admits a solution $f\in C^{2,1}(\CX\times \R^+)$ satisfying $0 \leq f \leq 1$. The uniqueness of the solution is a consequence of the maximum principle.\\
\indent The next step is to use a Feynman-Kac formula to deduce that $f$ is also a solution to \eqref{eq_feynmankac}.
Indeed, for $X$ solution to \eqref{eq_brownien}, let, for all $t\geq 0$, and $s\in [0,t]$,
 \begin{equation*}
  H(s,X_s)=f(X_s,t-s) e^{-I(s)}.
 \end{equation*}
Applying Itô Formula to $H(s, X_s)$, and using \eqref{eq_brownien}, \eqref{eq_v} and the fact that $\p_n f(x,t)=0$ for all $x\in \p \CX$, we find for all $s\in [0,t]$,
\begin{equation}
\label{eq_ito}
\begin{aligned}
 H(s,X_s)= H(0,X_0)&-\int_0^s \Big(d(X_{\sigma})+ b(X_{\sigma})f(X_{\sigma},t-{\sigma})^2\Big) e^{-I(\sigma)}d{\sigma}\\
 &+\int_0^s \sqrt{2m}(\p_x f(X_{\sigma},t-{\sigma})) e^{-I(\sigma)}dB_{\sigma}.
 \end{aligned}
 \end{equation}
We take the expectation of \eqref{eq_ito} for $s<t$. The expectation of the last term is equal to $0$ as $(\int_0^r \p_x f(X_{\sigma},t-{\sigma}) e^{-I(\sigma)}dB_{\sigma})_{r\in[0,s]}$ is a martingale. In addition, $\E_x[H(0,X_0)]=f(x,t)$. It stays to make $s$ tend to $t$ using the dominate convergence Theorem. As $\E_x[H(s,X_s)]\underset{s\to t}{\to}\E_x[H(t,X_t)]=0$, we deduce that $f$ is a solution to \eqref{eq_feynmankac}. \eqref{eq_feynmankac} admits a unique bounded solution, thus both solutions are equal, i.e. $\Pb_{\delta_x}[\Upsilon_0\leq t]=f(x,t)$. \\
\indent
Finally, we deduce the survival probability $\lim_{t\to+\infty}(1-f(x,t))$ using results on Equation \eqref{eq_v} described in \cite{berestycki_hamel_roques_2005} and in Theorems 9 and 11 of Chapter III in \cite{roques_2013}. Indeed, they prove that if $H>0$, there exists a unique positive solution $\phi^*$ to the elliptic equation \eqref{eq_wstar}, and that $\phi(t,x)=1-f(x,t)=\Pb_x(\Upsilon_0>t)$ tends to $ \phi^*(x)$ in $C^2(\CX)$ as $t\to +\infty$. Moreover, if $H\leq 0$, the unique solution to \eqref{eq_wstar} is the zero function and $\phi(t,x)\to 0$ uniformly in $\CX$ as $t\to +\infty$.
\end{proof}


\begin{proof}[Proof of Theorem \ref{theo_explosion}]
First, we split the studied probability into three parts~:
\begin{equation}
\label{eq_3termes}
\begin{aligned}
 \Pbx(\Upsilon_{\epsilon K}& < t_K)=\Pbx(\Upsilon_{\epsilon K} < t_K, \log\log(K)<\Upsilon_0<+\infty)\\
 &+\Pbx(\Upsilon_{\epsilon K} < t_K, \log\log(K)\geq \Upsilon_0)
+\Pbx( \Upsilon_{\epsilon K} < t_K, \Upsilon_0=+\infty).
\end{aligned}
\end{equation}
Let us start with the first term of \eqref{eq_3termes}~:
\begin{equation*}
 \Pbx(\Upsilon_{\epsilon K} < t_K, \log\log(K)<\Upsilon_0<+\infty)\leq \Pbx( \log\log(K)<\Upsilon_0<+\infty)\underset{K \to +\infty}{\to} 0.
\end{equation*}
The second term of \eqref{eq_3termes} will be treated using a comparison with a pure birth process. Let us consider a birth process with constant birth rate $\bar{b}$ and started with only one individual. $\tilde{\Upsilon}_{\epsilon K}$ denote the first time when $\tilde{N}$, the population size of the process, is greater than $\epsilon K$.
\begin{align*}
 \Pbx(\Upsilon_{\epsilon K} < t_K, \log\log(K)\geq \Upsilon_0) &\leq \Pbx(\Upsilon_{\epsilon K} \leq \log\log (K))\\
& \leq \Pb_1(\tilde{\Upsilon}_{\epsilon K} \leq \log \log(K))\\
& \leq \Pb_1 (\tilde{N}_{\log\log K} \geq \epsilon K)\\
& \leq e^{\bar{b} \log \log K}/(\epsilon K) \underset{K\to +\infty}{\to}0.
\end{align*}
There remains to deal with the third term in~\eqref{eq_3termes}. Note that if $H\leq 0$, Theorem~\eqref{theo_survieinf} implies that the third term is equal to zero, and the proof is done.\\
From this point forward, we assume that $H>0$.
Let $h$ be a positive eigenvector of the operator $m\Delta_x.+(b-d).$ with Neumann boundary conditions on $\p \CX$ associated with the eigenvalue $H$. Thanks to Itô's Formula, we find
\begin{equation*}
 \langle \eta_t, e^{-Ht}h\rangle =\langle \eta_0, h\rangle +\int_0^t \langle \eta_s, e^{-Hs}m\Delta_xh + (b-d)he^{-Hs}-Hhe^{-Hs} \rangle ds+ \int_0^t \sum_{i=1}^{M_s} \sqrt{2m} \nabla_x h(X^i_s)e^{-Hs}dB^i_s,
\end{equation*}
As $m\Delta_x h+(b-d)h=Hh$ and $\nabla_xh$ is bounded on $\CX$, $(e^{-Ht}\langle \eta_t, h \rangle)_{t\geq 0}$ is a martingale. Moreover it is positive, so, it converges a.s. to a non-negative random variable that will be denoted by $W$.
Obviously, $\{ \Upsilon_0 <+\infty\} \subset \{ W=0\}$. Our aim is to prove that this is an a.s. equality. As done in the previous proof, we denote the time of the first event of the population by $E_1$ and we set $I(s):=\int_0^s (b(X_r)+d(X_r))dr$.
Using the Markov property and the independence between individuals, we find an equation satisfied by $\Pbx[W=0]$:
\begin{equation}
\label{eq_vbar}
 \Pbx[W=0]= \E_x\left[ \int_0^{+\infty} e^{-I(s)} \Big(d(X_s)+b(X_s)\Pb_{\delta_{X_{s}}}[W=0]^2 \Big)ds \right].
\end{equation}
\noindent Finally, $g(x):=\Pbx[W>0]=1-\Pbx[W=0]$ is solution to
\begin{equation}
\label{eq_wsto}
 g(x)=\E_x\left[\int_0^{+\infty} e^{-I(s)} b(X_s)g(X_s)(2-g(X_s))ds\right], \forall x\in \bar\CX.
\end{equation}
Let us show that there exists at most one non-zero solution with value in $[0,1]$ to Equation \eqref{eq_wsto}.
Let $g_1$ and $g_2$ be two such solutions. We define
\begin{equation*}
 \gamma =\sup\{\tilde{\gamma}>0, g_1(x)-\tilde{\gamma} g_2(x)\geq 0, \forall x \in \bar\CX \}.
\end{equation*}
Assume first that $\gamma<1$.
\begin{equation}
\label{eq_w1w2}
 g_1(x)-\gamma g_2(x)=\E_x\bigg[ \int_0^{+\infty} e^{-I(s)}b(X_s)\Big[2(g_1(X_s)-\gamma g_2(X_s))
-(g_1(X_s)^2-\gamma g_2(X_s)^2)\Big]ds \bigg].
\end{equation}
As $\gamma<1$, $g_1^2-\gamma g_2^2\leq (g_1-\gamma g_2)(g_1+\gamma g_2)$ and we find
\begin{equation*}
 g_1(x)-\gamma g_2(x) \geq \E_x \bigg[ \int_0^{+\infty} e^{-I(s)}b(X_s)\big[g_1(X_s)-\gamma g_2(X_s)\big] \big[2-g_1(X_s)-\gamma g_2(X_s)\big]ds \bigg].
\end{equation*}
Moreover, by the definition of $\gamma$, there exists $x_0\in \bar\CX$ such that $g_1(x_0)-\gamma g_2(x_0)=0$, so
\begin{equation*}
0= \E_{x_0} \bigg[ \int_0^{+\infty} e^{-I(s)}b(X_s)\big[g_1(X_s)-\gamma g_2(X_s)\big] \big[2-g_1(X_s)-\gamma g_2(X_s)\big]ds \bigg].
\end{equation*}
Thus for a.e. $s\in \R^+$, $\Pb_{x_0}$-a.s., $b(X_s)[g_1(X_s)-\gamma g_2(X_s)][2-g_1(X_s)-\gamma g_2(X_s)]=0$.\\
Let us note that for all $x\in \bar\CX$, for $i=1,2$, \eqref{eq_vbar} implies that
\begin{eqnarray*}
 1-g_i(x)\geq \E_x\left[ \int_0^{+\infty} e^{-I(s)}d(X_s)ds \right]
 =  \Pbx[M_{E_1}=0]>0,
\end{eqnarray*}
that is, for all $x\in \bar\CX$, $2-g_1(x)-\gamma g_2(x)>0$. As $b$ is positive, for a.e. $s\in \R^+$, $\Pb_{x_0}$-a.s.
$
g_1(X_s)-\gamma g_2(X_s)=0.
$
In addition with the fact that $\gamma g_2^2-g_1^2=\gamma g_2^2 (1-\gamma)-(g_1-\gamma g_2)(g_1+\gamma g_2)$, \eqref{eq_w1w2} implies that,
\begin{eqnarray*}
0&=& g_1(x_0)-\gamma g_2(x_0)\\
&=&\E_{x_0}\bigg[ \int_0^{+\infty} e^{-I(s)}b(X_s)\gamma g_2(X_s) [1- \gamma ]ds \bigg].
\end{eqnarray*}
Using the same argument as before and that $1- \gamma>0$, we deduce that for a.e. $s\in \R^+$, $\Pb_{x_0}$-a.s.,
$
g_2(X_s)=0.
$
Moreover, under $\Pb_{x_0}$, the random variable $X_s$ has a density with respect to Lebesgue measure, that is, for Lebesgue-a.a. $x\in \bar\CX$, $g_2(x)=0$. 
This is a contradiction with the fact that $g_2$ is a non-zero solution.\\
Finally if $\gamma \geq 1$, we define instead $\gamma' =\sup\{\tilde{\gamma}>0, g_2(x)-\tilde{\gamma} g_1(x)\geq 0, \forall x \in \CX \}<1$ and we use symmetric arguments to reach a contradiction. Thus, there is at most one solution to \eqref{eq_wsto} with values in $[0,1]$.\\

\noindent
The next step is to show that $\phi^*$, which is solution to \eqref{eq_wstar}, is also solution to \eqref{eq_wsto}.
Let us write $f^*=1-\phi^*$, which satisfies
\begin{equation}
\label{eq_ustar}
 \left\{ \begin{aligned}
          &0=m\Delta_x f^* -(b+d)f^*+d+b(f^*)^2 , \text{ on } \CX,\\
	  &\p_nf^*=0, \text{ on } \p\CX.
         \end{aligned}
\right.
\end{equation}
We apply Itô's Formula to $f^*(X_t)e^{-\int_0^t(b(X_r)+d(X_r))dr}$. Then taking the expectation and using Equation \eqref{eq_ustar}, we deduce
\begin{align*}
 f^*(x)=&\E_x\left[\int_0^t  e^{-I(s)}[d(X_s)+b(X_s)f^*(X_s)^2]ds\right]
+\E_x\left[f^*(X_t)e^{-I(t)}\right].
\end{align*}
Our aim is now to let $t$ tend to infinity.
Note that $I(t)\geq \underbar b t$ for all $t\in \R^+$ and that $f^*$ is bounded by $1$. Hence, we use the dominated convergence Theorem to find those two convergences:
\begin{align*}
 &\left\vert \E_x \left[f^*(X_t)e^{-I(t)}\right]\right\vert \leq \E_x\left[e^{-I(t)}\right]\underset{t\to +\infty}{\to} 0\\
 &\left\vert \E_x\left[\int_t^{+\infty} e^{-I(s)}[d+b(f^*)^2](X_s)ds \right]\right\vert \leq \E_x\left[\int_t^{+\infty} e^{-I(s)}(\bar{b}+\bar{d})ds \right]\underset{t\to +\infty}{\to} 0.
\end{align*}
Thus, we make $t$ tend to infinity and we find, for all $x\in \bar\CX$,
\begin{equation*}
 f^*(x)=\E_x\left[\int_0^{+\infty}  e^{-I(s)}[d(X_s)+b(X_s)f^*(X_s)^2]ds\right].
\end{equation*}
Since $\phi^* \equiv 1-f^*$, we conclude that
$\phi^*$ is a solution to \eqref{eq_wsto}. There exists at most one non-zero solution to~\eqref{eq_wsto}, thus we have either $\Pbx[W>0]=\phi^*(x)$ for all $x\in \bar\CX $, or $\Pbx[W>0]=0$ for all $x\in \bar\CX $.
Using Itô's Formula, it is easy to check that in the case $H>0$, $(\langle \eta_t,e^{-Ht}h \rangle)_{t\geq 0}$ is bounded in $L^2$. So this martingale is uniformly bounded and it converges in $L^1$ to $W$, hence $\E_{\delta_x}[W]=h(x)>0$. Finally,
\begin{equation}
\label{eq_Wpositive}
\Pbx[W>0]=\phi^*(x)=\Pbx[\Upsilon_0=+\infty] \; \Rightarrow \; \{ W>0\} =\{\Upsilon_0=+\infty\} \; a.s.
\end{equation}
\noindent 
Thus, on $\{\Upsilon_0=+\infty\}$,
\begin{align*}
 \dfrac{\log(\epsilon K)}{\Upsilon_{\epsilon K}}\geq  \dfrac{\log(\langle \eta_{\Upsilon_{\epsilon K} } ,  h e^{-H\Upsilon_{\epsilon K}} \rangle .\|h\|_{\infty}^{-1} e^{H\Upsilon_{\epsilon K}})}{\Upsilon_{\epsilon K}}\underset{K \to +\infty}{\to} H>0 \; a.s.,
\end{align*}
as $\langle \eta_{\Upsilon_{\epsilon K} } ,  h \rangle \leq \epsilon K \|h\|_{\infty}  $, $\Upsilon_{\epsilon K} \to +\infty$ when $K$ tends to infinity and $W>0$.
Hence,
\begin{equation*}
 \lim_{K \to +\infty} \dfrac{\Upsilon_{\epsilon K}}{\log(\epsilon K)} < +\infty \text{ and } \lim_{K \to +\infty} \dfrac{t_K}{\log(\epsilon K)}=+\infty,
\end{equation*}
and so, the third term in~\eqref{eq_3termes} satisfies
\begin{equation}
\label{eq_limittK}
 \Pbx( \Upsilon_{\epsilon K} < t_K, \Upsilon_0=+\infty)=\Pbx\left( \frac{\Upsilon_{\epsilon K}}{\log(\epsilon K)} < \frac{t_K}{\log(\epsilon K)}, \Upsilon_0=+\infty\right) \underset{K\to\infty}{\rightarrow}\Pbx\left(  \Upsilon_0=+\infty\right).
\end{equation}
Finally, we have shown that the two first terms in \eqref{eq_3termes} tend to $0$, so using additionally \eqref{eq_limittK}
\begin{equation*}
 \lim_{K\to +\infty } \Pbx( \Upsilon_{\epsilon K} < t_K)=\Pbx\left(  \Upsilon_0=+\infty\right)=\phi^*(x).
\end{equation*}
That ends the proof for $H>0$.
\end{proof}

\section{Proof of Theorem~\ref{theo_main}}
\label{sec_TSS}

This section is devoted to prove Theorem~\ref{theo_main}.
The structure of the proof of Theorem~\ref{theo_main} is similar to the one of \cite{champagnat_2006}, thus, we do not repeat all the details but only the points that are different.\\
The first proposition concerns the behavior of the first time of a mutation $S_1^K$, when the initial state is a monomorphic population.
\begin{proposition}
\label{prop_mono}
 Suppose that Assumptions~\ref{ass_coef} and~\eqref{ass_KqK} hold. Let $u\in \CU$ and $\mathcal{C}^u$ a compact subset of $M_F(\CX \times \{u\})$ such that $0\not\in \mathcal{C}^u$. If $\nu^K_0 \in  \mathcal{C}^u \cup M_F^K(\CX)$, then
 \begin{itemize}
  \item for any $\gamma>0$, $\underset{K\to +\infty}{\lim} \Pb_{\nu^K_0}\left( S_1^K>\log K , \, \underset{t\in[\log K, S_1^K]}{\sup} \W(\nu^K_t,\bar \xi^u\delta_u) \geq \gamma \right)=0$.\\
 \item Furthermore, $\underset{K \to +\infty}{\lim} \Pb_{\nu^K_0}\left( S_1^K > \frac{t}{K q_K} \right)=\exp\Big( -t \int_{\CX}b^u(x)p^u(x)\bar \xi^u(dx) \Big).$
 \end{itemize}

\end{proposition}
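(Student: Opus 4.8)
The plan is to compare, on the random interval $[0,S_1^K)$ before the first mutation, the restriction $\nu^{K,u}$ of $\nu^K$ to the trait $u$ with a ``clean'' monomorphic birth--death diffusion $\tilde\nu^K$ carrying only trait $u$ (birth rate $b^u$, death rate $d^u+c^{uu}\cdot\,$, diffusion coefficient $m^u$, no mutation). Since no mutation has fired strictly before $S_1^K$, one realises $\tilde\nu^K$ for all times together with an independent marking of its birth events, a birth at $x$ being marked with probability $q_Kp^u(x)$, so that $\nu^{K,u}_t=\tilde\nu^K_t$ for all $t<S_1^K$ and $S_1^K$ is the first marked birth. As $\tilde\nu^K$ is itself an instance of the model (with mutation parameter $0$), Theorem~\ref{theo_limitpop} gives that $\tilde\nu^K$ converges in law to $\xi$ in $\D([0,T],M_F(\CX\times\{u\}))$ for each $T$, $\xi$ solving the monomorphic version of~\eqref{eq_theoapproxpop}, while the logistic form of its death rate keeps $\sup_{t\ge0}\langle\tilde\nu^K_t,\one\rangle$ bounded with probability tending to $1$, uniformly in the horizon. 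I assume $H^u>0$ (otherwise $\bar\xi^u$ is undefined and the statement is empty), and, since the event in the first assertion only grows as $\gamma$ decreases, I may and do take $\gamma$ small, in particular admissible in Theorem~\ref{theo_exittimeintro}.

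For the first assertion: using the long--time analysis of~\cite{leman_mirrahimi_meleard_2014} (global attraction of every nonzero solution of the monomorphic equation towards $\bar\xi^u$ when $H^u>0$), the instantaneous regularisation in Theorem~\ref{theo_limitpop}, and compactness of $\mathcal{C}^u$ with $0\notin\mathcal{C}^u$, one produces a time $T_0$ such that every solution of the monomorphic equation started in $\mathcal{C}^u$ lies in $B(\bar\xi^u,\gamma'/2)$ at time $T_0$, $\gamma'$ being the radius supplied by Theorem~\ref{theo_exittimeintro} for this $\gamma$. Then Theorem~\ref{theo_limitpop} applied to $\tilde\nu^K$ gives $\Pb(\W(\tilde\nu^K_{T_0},\bar\xi^u)<\gamma')\to1$, and on that event the Markov property at $T_0$ together with Theorem~\ref{theo_exittimeintro} — used with an empty initial $v$-population, so that $T^K_\epsilon\wedge S_1^K=S_1^K$ because $\langle\nu^{K,v}_{S_1^K},\one\rangle=1/K<\epsilon$ — provide $V>0$ with $\Pb(R^K_\gamma>e^{KV}\wedge S_1^K)\to1$. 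Thus, with probability tending to $1$, $\W(\nu^K_t,\bar\xi^u\delta_u)=\W(\tilde\nu^K_t,\bar\xi^u)<\gamma$ for every $t\in[T_0,e^{KV}\wedge S_1^K]$; since $T_0<\log K<e^{KV}$ for large $K$, on $\{S_1^K>\log K\}$ we have $[\log K,S_1^K\wedge e^{KV}]\subset[T_0,e^{KV}\wedge S_1^K]$, and combining this with $\Pb(S_1^K>e^{KV})\to0$ (guaranteed by~\eqref{ass_KqK}, the mutation clock ringing at rate of order $Kq_K$ while $\nu^{K,u}$ stays near $\bar\xi^u$) yields the first claim.

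For the second assertion: conditionally on $\sigma(\tilde\nu^K)$, the marked births form a point process with intensity $q_KK\langle\tilde\nu^K_s,p^ub^u\rangle\,ds$, hence
\begin{multline*}
\Pb\big(S_1^K>\tau\mid\sigma(\tilde\nu^K)\big)=\exp\Big(-q_KK\int_0^\tau\langle\tilde\nu^K_s,p^ub^u\rangle\,ds\Big)\\ =\exp\Big(-\int_0^{Kq_K\tau}\big\langle\tilde\nu^K_{r/(Kq_K)},p^ub^u\big\rangle\,dr\Big),
\end{multline*}
the last step by the substitution $r=Kq_Ks$. Taking $\tau=t/(Kq_K)$, assumption~\eqref{ass_KqK} ensures that, for each fixed $r\in(0,t]$, the time $r/(Kq_K)$ eventually lies in the window $(\log K,e^{KV})$ for every $V>0$, so the first assertion gives $\langle\tilde\nu^K_{r/(Kq_K)},p^ub^u\rangle\to\langle\bar\xi^u,p^ub^u\rangle=\int_\CX b^u(x)p^u(x)\bar\xi^u(dx)$ in probability. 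Since the integrand is bounded uniformly in $K$ and $r$ (logistic control of the mass), bounded convergence gives $\int_0^t\langle\tilde\nu^K_{r/(Kq_K)},p^ub^u\rangle\,dr\to t\int_\CX b^up^u\bar\xi^u$ in probability; the conditional survival probability above, being $[0,1]$-valued, then converges in probability to $\exp(-t\int_\CX b^up^u\bar\xi^u)$, and taking expectations concludes.

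The hard part is the first assertion. It is not a finite--horizon statement: one must follow $\nu^K$ up to the mutation time $S_1^K$, which on the natural scale is large, so the plain large--population convergence of Theorem~\ref{theo_limitpop} does not suffice and the exponential--time stability of Theorem~\ref{theo_exittimeintro} is essential; that theorem in turn relies on the deterministic fact that the monomorphic flow enters every neighbourhood of $\bar\xi^u$ within a time \emph{uniform} over $\mathcal{C}^u$, a compact of possibly singular initial data handled only through the smoothing of~\eqref{eq_theoapproxpop} combined with the $L^2$-stability of~\cite{leman_mirrahimi_meleard_2014}. The remaining ingredients — the coupling identifying $\nu^{K,u}$ with $\tilde\nu^K$ on $[0,S_1^K)$, the verification that $T^K_\epsilon$ does not truncate that interval, and the time--scale bookkeeping placing $r/(Kq_K)$ in $(\log K,e^{KV})$ — are routine.
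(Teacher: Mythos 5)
Your proof is correct and follows essentially the same strategy as the paper: use a uniform-over-$\mathcal{C}^u$ attraction time $T_0$ to get the process into a $\gamma'$-neighbourhood of $\bar\xi^u$, invoke Theorem~\ref{theo_exittimeintro} (with an empty $v$-population, so $T^K_\epsilon$ plays no role) to keep it there for a time $e^{KV}$, and bound the mutation clock to ensure $S_1^K<e^{KV}$ with high probability. The one real stylistic difference is that you make explicit a coupling of $\nu^{K,u}$ with a mutation-free monomorphic process $\tilde\nu^K$ and a thinning of its birth events, which the paper keeps implicit (it works directly with $\nu^K$ and the event $\{t\le S_1^K\}$); this coupling lets you replace the paper's use of the LDP upper bound Theorem~\ref{theo_majoration} in the warm-up phase by the law of large numbers Theorem~\ref{theo_limitpop} (both work, though with the LLN one should note the uniformity over $\mathcal{C}^u$ is obtained by a subsequence/compactness argument, since Theorem~\ref{theo_limitpop} is stated for a single convergent sequence of initial data). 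For the second point, the paper simply invokes Lemma 2 of Champagnat (2006), whereas you give the thinning computation directly (conditioning on $\sigma(\tilde\nu^K)$, changing variables $r=Kq_Ks$, and applying dominated convergence via the mass control); this is more self-contained and is exactly the content of the cited lemma. One minor imprecision: in the dominated-convergence step you write ``the first assertion gives $\langle\tilde\nu^K_{r/(Kq_K)},p^ub^u\rangle\to\langle\bar\xi^u,p^ub^u\rangle$''; what you actually need and actually proved in establishing the first assertion is the stronger fact that $\tilde\nu^K$ stays in $B(\bar\xi^u,\gamma)$ on the whole window $[\log K,e^{KV}]$ with probability $\to1$, which is what the coupling plus Theorem~\ref{theo_exittimeintro} deliver; the first assertion as stated concerns $\nu^K$ only up to $S_1^K$ and does not directly speak of $\tilde\nu^K$ after $S_1^K$. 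This is a point of exposition, not a gap. (You may also note that the displayed version of hypothesis~\eqref{ass_KqK} in the paper has a sign slip --- the two limits as written are mutually exclusive; your reading, $Kq_K\log K\to0$ and $Kq_Ke^{KV}\to\infty$, is the one actually used in the paper's own bound $\Pb(\mathcal{M}_{e^{KV}}=0)\to0$.)
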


 Proposition~\ref{prop_mono} can be proved using similar arguments as those of the proof of Lemma 2 in \cite{champagnat_2006}. It is a consequence of the following lemma.
 \begin{lemma}
 \label{lemma_enterinball}
  For any $\alpha>0$, there exists $T_\alpha>0$ such that for any $\xi_0\in \mathcal{C}^u$, for any $t\geq T_\alpha$, $\W(\xi_t,\bar \xi^u)<\alpha$, where $(\xi_t)_{t\geq 0}$ is the solution to Equation \eqref{eq_theoapproxpop} with initial state $\xi_0$.\\
 \end{lemma}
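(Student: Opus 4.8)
The plan is to obtain the uniform convergence time $T_\alpha$ from three ingredients: the pointwise long-time convergence of every nonzero monomorphic solution of \eqref{eq_theoapproxpop} towards $\bar\xi^u$ (due to \cite{leman_mirrahimi_meleard_2014}, valid since $H^u>0$ is in force, so that $\bar\xi^u$ is well defined), the Lyapunov stability of $\bar\xi^u$ already contained in Proposition~\ref{prop_stability}, and the finite-time continuous dependence of solutions of \eqref{eq_theoapproxpop} on their initial data, uniform over bounded initial measures. A finite-covering argument on the compact set $\mathcal{C}^u$ will then upgrade pointwise convergence to the desired uniform statement.

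More precisely, fix $\alpha>0$. First I would note that, $\mathcal{C}^u$ being compact with $0\notin\mathcal{C}^u$, the total masses $\langle\xi_0,\one\rangle$ are bounded above by some $M<+\infty$ on $\mathcal{C}^u$. Applying Proposition~\ref{prop_stability} with $\gamma:=2\alpha$ and with the $v$-population taken identically zero — so that $t_{\epsilon'}=+\infty$, since \eqref{eq_theoapproxpop} carries no mutation term and a vanishing $v$-component stays vanishing — yields $\gamma'>0$ such that $\W(\zeta_0,\bar\xi^u)<\gamma'$ implies $\W(\zeta_t,\bar\xi^u)<\alpha$ for all $t\geq0$. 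Because \eqref{eq_theoapproxpop} is autonomous, this reads equivalently: once a trajectory enters $B(\bar\xi^u,\gamma')$ at some time $s$, it remains in $B(\bar\xi^u,\alpha)$ for every $t\geq s$.

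Next, for each $\xi_0\in\mathcal{C}^u$ the convergence result of \cite{leman_mirrahimi_meleard_2014} provides a time $T_{\xi_0}>0$ with $\W(\xi_{T_{\xi_0}},\bar\xi^u)<\gamma'/2$; here one may, if needed, first let the solution flow for a fixed small time $t_0>0$ so that $\xi_{t_0}$ has the smooth positive density required by that reference, and then apply it to $\xi_{t_0}$. On the other hand, repeating verbatim the Gronwall estimate already carried out in the proof of Lemma~\ref{lemma_exit1} (inequalities \eqref{eq_minxizeta}--\eqref{eq_boundxione}), for every $T>0$ there is a constant $C(T)$, depending only on $T$ and on $M$, such that any two solutions $\xi,\zeta$ of \eqref{eq_theoapproxpop} started in $\mathcal{C}^u$ satisfy $\sup_{r\in[0,T]}\W(\xi_r,\zeta_r)\leq C(T)\,\W(\xi_0,\zeta_0)$. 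Setting $r_{\xi_0}:=\gamma'/(2C(T_{\xi_0}))$, the triangle inequality then shows that every solution $\zeta$ with $\zeta_0\in B(\xi_0,r_{\xi_0})\cap\mathcal{C}^u$ satisfies $\W(\zeta_{T_{\xi_0}},\bar\xi^u)<\gamma'$, hence, by the stability step applied from time $T_{\xi_0}$, $\W(\zeta_t,\bar\xi^u)<\alpha$ for all $t\geq T_{\xi_0}$. Extracting a finite subcover $B(\xi_0^1,r_{\xi_0^1}),\dots,B(\xi_0^n,r_{\xi_0^n})$ of $\mathcal{C}^u$ and putting $T_\alpha:=\max_{1\leq i\leq n}T_{\xi_0^i}$ then finishes the argument: any $\xi_0\in\mathcal{C}^u$ lies in some ball of the subcover, so $\W(\xi_t,\bar\xi^u)<\alpha$ for all $t\geq T_\alpha$.

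The step I expect to require the most care is precisely the passage from pointwise to uniform convergence, i.e. checking that the continuous-dependence constant $C(T)$ and the hitting times $T_{\xi_0}$ can be controlled as $\xi_0$ ranges over $\mathcal{C}^u$. The first is handled by the uniform total-mass bound $M$ through Gronwall, and the second by the finite subcover; the only genuinely delicate point is verifying that the convergence $\xi_t\to\bar\xi^u$ of \cite{leman_mirrahimi_meleard_2014} applies to all initial data in $\mathcal{C}^u$ — not only to smooth positive densities — which is why one flows for a short time $t_0>0$ first, using that $\xi_{t_0}$ then has a $C^2$ density by Theorem~\ref{theo_limitpop}.
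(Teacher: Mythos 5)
Your proof is correct and follows essentially the same structure as the paper's: local $\W$-stability of $\bar\xi^u$, pointwise long-time convergence from~\cite{leman_mirrahimi_meleard_2014} to produce hitting times $T_{\xi_0}$, finite-time continuous dependence via Gronwall to make these hitting times robust to perturbation of the initial datum, and a finite subcover of the compact $\mathcal{C}^u$. The one stylistic difference is that you invoke Proposition~\ref{prop_stability} directly (in the degenerate case $\xi^v_0\equiv 0$, so $t_{\epsilon'}=+\infty$) rather than re-running its three-step argument as the paper does, and you explicitly address the regularity needed to apply the convergence result by flowing for a short time $t_0>0$ first; both are legitimate, minor streamlinings of the same argument.
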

 \begin{proof}
On the one hand, Theorem 1.4 in \cite{leman_mirrahimi_meleard_2014} implies that the density of $\bar\xi^u$ is a stable monomorphic equilibrium for the $L^2$-distance. Using a proof in three steps as that of Proposition~\ref{prop_stability}, we prove a $\W$-stability~: there exists $\alpha'$ such that for any $\xi_0\in B(\bar\xi^u,\alpha')$ and for any $t\geq 0$, $\W(\xi_t,\bar\xi^u)< \alpha$.\\
On the other hand, for any $\xi_0\in \mathcal{C}^u$, $\xi_t$ converges towards $\bar\xi^u$. There exists $T_{\xi_0}$ such that $\W(\xi_{T_{\xi_0}},\bar\xi^u)<\alpha'/2$.
Using Lemma~\ref{lemma_lipschitz} and arguments similar to \eqref{eq_majeloignement} and \eqref{eq_debut}, we show that for any $t \geq 0$, any $\zeta_0 \in M_F(\CX)$,
$$
\sup_{r\in[0,t]}\W(\zeta_{t},\xi_{t})\leq C(t) \W(\zeta_0,\xi_0),
$$
where $(\zeta_t)_{t\geq 0}$ is the solution to \eqref{eq_theoapproxpop} with initial state $\zeta_0\in M_F(\CX\times \{u\})$.
Consequently, there exists $\delta_{\xi_0}>0$ such that for any $\zeta_0 \in B(\xi_0,\delta_{\xi_0})$, $\W(\zeta_{T_{\xi_0}},\xi_{T_{\xi_0}}) < \alpha'/2$.
Thus, for any $\zeta_0$ such that $\W(\zeta_0,\xi_0)< \delta_{\xi_0}$, for any $t\geq T_{\xi_0}$, $\W(\zeta_t,\bar\xi^u)< \alpha$.\\
Finally, as $\mathcal{C}^u$ is a compact set, there exists a finite number of balls such that $\mathcal{C}^u \subset \cup_{i=1}^n B(\xi^i_0,\delta_{\xi^i_0})$. Defining $T_\alpha=\max_{i=1..n}T_{\xi^i_0}$, we deduce the lemma.
 \end{proof}

 \begin{proof}[Proof of Proposition~\ref{prop_mono}]
First, remark that the first probability of Proposition~\ref{prop_mono} is non-increasing with $\gamma$. Thus, it is sufficient to prove the property for any small $\gamma>0$. Let us assume that $\gamma$ satisfies the assumptions of Theorem~\ref{theo_exittimeintro}. Theorem~\ref{theo_exittimeintro} in the monomorphic case implies that :
there exist $\gamma'>0$, $V>0$, such that 
\begin{equation}
\label{eq_term1}
\underset{\nu^K_0\in B(\bar\xi^u,\gamma') \cap M_F^K}{\sup} \Pb\left(R^K_\gamma \leq S_1^K \wedge e^{KV} \right) \underset{K\to \infty}{\to} 0,
\end{equation}
$R^K_\gamma$ is defined by~\eqref{def_RKgamma}. We set $2\alpha=\gamma'$, then Lemma~\ref{lemma_enterinball} and Theorem~\ref{theo_majoration} imply that
\begin{equation}
\label{eq_term2}
\underset{\nu^K_0\in \mathcal{C}^u}{\sup} \Pb\left( \W(\nu^K_{T_\alpha},\bar\xi^u)\geq 2\alpha \right) \underset{K\to+\infty}{\to}0.
\end{equation}
Using the Markov property, we deduce if $K$ is sufficiently large such that $\log(K)>T_{\alpha}$,
\begin{align*}
&\Pb_{\nu^K_0}\bigg( S_1^K>\log K , \, \underset{t\in[\log K, S_1^K]}{\sup} \W(\nu^K_t,\bar \xi^u) \geq \gamma \bigg)\\
&\leq  \Pb_{\nu^K_0}(\W(\nu^K_{T_\alpha},\bar\xi^u)\geq \gamma')+\E_{\nu^K_0}\left[\one_{\{ S_1^K\geq \log(K) , \W(\nu^K_{T_\alpha},\bar\xi^u)<\gamma'\}} \underset{t\in[T_\alpha, S_1^K+T_\alpha]}{\sup} \W(\nu^K_t,\bar \xi^u) \geq \gamma\right]\\
& \leq \Pb_{\nu^K_0}(\W(\nu^K_{T_\alpha},\bar\xi^u)\geq \gamma') + \E_{\nu^K_0}\left[ \one_{\{S_1^K\geq \log(K), \W(\nu^K_{T_\alpha},\bar\xi^u)<\gamma'\}} \Pb_{\nu^K_{T_\alpha}}(R^K_\gamma \leq S_1^K )\right]\\
& \leq \Pb_{\nu^K_0}(\W(\nu^K_{T_\alpha},\bar\xi^u)\geq 2\alpha) + \underset{\nu_0\in B(\bar\xi^u,\gamma')}{\sup} \Big[\Pb_{\nu_0}(R^K_\gamma \leq S_1^K \wedge e^{KV} ) +  \Pb_{\nu_0}(e^{KV} \leq R^K_\gamma \leq S_1^K ) \Big].
\end{align*}
The first term and the second term tend to $0$ when $K$ tends to $+\infty$ according to \eqref{eq_term2} and \eqref{eq_term1} respectively. There remains the third term. On $\{t\leq R^K_\gamma\leq S_1^K\}$, the number of mutations $\mathcal{M}_t$ is stochastically bounded from below by a Poisson process with parameter $K q_K (\langle b^u p,\bar\xi^u\rangle - \gamma \|b^u p\|_{Lip})$ which is positive if $\gamma$ is small enough. We conclude with the fact that $\Pb(e^{KV} \leq R^K_\gamma \leq S_1^K)\leq \Pb(\mathcal{M}_{e^{KV}}=0) \underset{K \to +\infty}{\to} 0$, under Assumption~\eqref{ass_KqK}. That ends the proof of the first point.\\
The second point of Proposition~\ref{prop_mono} is easily deduced from this first point and Lemma 2 in \cite{champagnat_2006}.
\end{proof}

The second proposition studies the process with a dimorphic initial state. Let us define~:
\begin{itemize}[noitemsep]
 \item $\theta_0$ is the first time when the population becomes monomorphic again,
 \item $V_0$ is the phenotypic trait of the population at time ${\theta_0}$.
\end{itemize}

\begin{proposition}
\label{prop_dimo}
 Suppose Assumptions~\ref{ass_coef}, \ref{ass_noncoexistence} and \eqref{ass_KqK} hold, and that the initial state $\nu^K_0\in M_F(\CX\times \{u,v\})$ is such that $\nu^{K,u}_0 $ converges weakly to $\bar \xi^u$ in $M_F(\CX)$ and $\nu^{K,v}_0=\frac{\delta_{x_0}}{K}$. Then,
 $$
 \underset{K\to \infty}{\lim } \Pb(V_0=v)=1-\underset{K\to \infty}{\lim } \Pb(V_0=u)=\phi^{vu}(x_0).
 $$
Moreover,
 $$
 \forall \eta>0, \, \underset{K\to\infty}{\lim}\Pb\left(\theta_0\leq S_1^K \wedge \frac{\eta}{Kq_K}\right)=1,\\
 \text{ and }\forall \gamma>0, \, \underset{K\to\infty}{\lim}\Pb\left(\W(\nu^K_{\theta_0},\bar\xi^{V_0}<\gamma\right)=1.
 $$
\end{proposition}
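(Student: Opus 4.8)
The argument follows Lemma~3 of \cite{champagnat_2006}, combining the branching-diffusion estimates of Section~\ref{sec_survivalproba} with the exit-time bound of Theorem~\ref{theo_exittimeintro}. Since the first conclusion does not involve $\gamma$ and the last one is monotone in $\gamma$, it suffices to argue for $\gamma>0$ arbitrarily small; fix such a $\gamma$ satisfying the hypotheses of Theorem~\ref{theo_exittimeintro} and let $\gamma',\epsilon,V>0$ be the associated parameters (shrinking $\gamma',\epsilon$ if convenient, which is harmless). As $\nu^{K,u}_0\to\bar\xi^u$ weakly and $\langle\nu^{K,v}_0,\one\rangle=1/K$, Theorem~\ref{theo_exittimeintro} applies for $K$ large and gives an event of probability tending to $1$ on which $\W(\nu^{K,u}_t,\bar\xi^u)<\gamma$ for all $t\le e^{KV}\wedge T^K_\epsilon\wedge S_1^K$. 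On this event, while $t\le T^K_\epsilon\wedge S_1^K$, each trait-$v$ individual at $x$ reproduces at rate $b^v(x)$ and dies at a rate within $\delta$ of $d^v(x)+\int_\CX c^{vu}(y)\bar g^u(y)\,dy$, with $\delta\to0$ when $\gamma,\epsilon\to0$ (the Lipschitz term $c^{vu}\cdot\nu^{K,u}_t$ being controlled by $\W(\nu^{K,u}_t,\bar\xi^u)<\gamma$ and the competition between trait-$v$ individuals by $\langle\nu^{K,v}_t,\one\rangle<\epsilon$). One can therefore couple $(K\langle\nu^{K,v}_t,\one\rangle)_t$, up to $T^K_\epsilon\wedge S_1^K$, between two branching diffusion processes $\eta^\pm$ as in Section~\ref{sec_survivalproba}, with death rates $d^v(x)+\int c^{vu}\bar g^u\pm\delta$ and both issued from $\delta_{x_0}$. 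Comparing~\eqref{eq_wstar} for these coefficients with~\eqref{eq_survivalprob}, their survival probabilities satisfy $\phi^{*,-}\le\phi^{vu}\le\phi^{*,+}$, and, by monotonicity in $\delta$ and uniqueness of the positive solution, $\phi^{*,\pm}(x_0)\to\phi^{vu}(x_0)$ as $\delta\to0$.

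We conclude by a dichotomy on Assumption~\ref{ass_noncoexistence}, using that $H^v\kappa^{uu}-H^u\kappa^{vu}$ has the sign of $H^v-\int_\CX c^{vu}\bar g^u$, the principal eigenvalue ruling the criticality of $\eta^\pm$. Under Condition~\ref{cond_1} this quantity is negative, so $\phi^{vu}\equiv0$ and, for $\gamma,\epsilon$ small, $\eta^+$ is subcritical; with probability tending to $1$, $\eta^+$ never reaches $\epsilon K$ and becomes extinct before time $\log\log K$, which by Assumption~\eqref{ass_KqK} is negligible compared with $S_1^K$ and $\eta/(Kq_K)$. On these events $T^K_\epsilon$ never occurs, so the trait-$v$ population dies out before $S_1^K\wedge\eta/(Kq_K)$; hence $V_0=u$, $\theta_0\le S_1^K\wedge\eta/(Kq_K)$ and $\nu^K_{\theta_0}=\nu^{K,u}_{\theta_0}\delta_u$ with $\W(\nu^{K,u}_{\theta_0},\bar\xi^u)<\gamma$, which yields the three conclusions since $\phi^{vu}(x_0)=0$.

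Under Condition~\ref{cond_2}, $\eta^\pm$ are supercritical for $\gamma,\epsilon$ small, hence $H^v>0$. Applying Theorem~\ref{theo_explosion} to $\eta^\pm$ with $t_K:=\eta/(2Kq_K)$ (so $t_K/\log K\to\infty$ by Assumption~\eqref{ass_KqK}), the coupling shows that, with probability tending to $1$, either the trait-$v$ population dies out before $\log\log K$ — then $V_0=u$ as above — or $T^K_\epsilon<t_K$ occurs, the latter with probability tending to $\phi^{vu}(x_0)$ after letting $\gamma,\epsilon\to0$; in both cases $S_1^K$ has not occurred and $T^K_\epsilon\ll e^{KV}$. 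On the second event, at time $T^K_\epsilon$ we have $\langle\nu^{K,v}_{T^K_\epsilon},\one\rangle\in[\epsilon,\epsilon+1/K]$ and $\W(\nu^{K,u}_{T^K_\epsilon},\bar\xi^u)<\gamma$; by Theorem~\ref{theo_limitpop}, the convergence of the deterministic solution of~\eqref{eq_theoapproxpop} to $(0,\bar\xi^v)$ under Condition~\ref{cond_2} (Theorems~1.2 and~1.4 in~\cite{leman_mirrahimi_meleard_2014}), and a compactness argument over the admissible states at $T^K_\epsilon$ (as in the proof of Lemma~\ref{lemma_exit1}), there is a time $T_2$, which may be taken to grow slowly with $K$ so as to make the residual mass as small as needed, such that with probability tending to $1$ one has $\W(\nu^{K,v}_{T^K_\epsilon+T_2},\bar\xi^v)<\gamma$, $\langle\nu^{K,u}_{T^K_\epsilon+T_2},\one\rangle=o(1)$, and no new mutation on $[T^K_\epsilon,T^K_\epsilon+T_2]$. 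Finally, Theorem~\ref{theo_exittimeintro} applied with $u$ and $v$ interchanged (valid since $H^v>0$) keeps the trait-$v$ population in $B(\bar\xi^v,\gamma)$ for an exponentially long time, while the trait-$u$ population, dominated by a subcritical branching diffusion — Condition~\ref{cond_2} makes the invasion fitness of $u$ in a resident $v$ negative, so $\phi^{uv}\equiv0$ by Definition~\ref{def_phivu} — started from $o(K)$ individuals, becomes extinct in time $O(\log K)$ with probability tending to $1$; hence $V_0=v$, $\theta_0\le S_1^K\wedge\eta/(Kq_K)$ and $\W(\nu^K_{\theta_0},\bar\xi^v\delta_v)<\gamma$. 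Combining the two events of the dichotomy gives $\Pb(V_0=v)\to\phi^{vu}(x_0)$, $\Pb(V_0=u)\to1-\phi^{vu}(x_0)$ and the two remaining limits.

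The routine inputs are the branching-diffusion couplings and the Poissonian bounds on mutation events. The two delicate points are: the identification of the asymptotic invasion probability with $\phi^{vu}(x_0)$, which rests on the continuity in the death rate of the survival probability of the branching diffusion, itself a consequence of monotonicity and of the uniqueness of the positive solution of~\eqref{eq_wstar}; and — the main obstacle — the treatment of the resident trait-$u$ population after a successful invasion under Condition~\ref{cond_2}. There one must rerun the exit-time analysis of Section~\ref{sec_lowerbound} with $u$ and $v$ exchanged to pin the trait-$v$ population near $\bar\xi^v$, and simultaneously choose the intermediate time $T_2$ growing slowly enough that the residual trait-$u$ mass is negligible before the subcritical domination is invoked, so that the population becomes monomorphic of trait $v$ within a time $O(\log K)$, negligible with respect to $1/(Kq_K)$.
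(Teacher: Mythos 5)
Your proposal follows the same route as the paper: bound $\nu^{K,v}$ between branching diffusions, invoke Theorems~\ref{theo_exittimeintro}, \ref{theo_survieinf} and \ref{theo_explosion}, identify the limit $\phi^{vu}(x_0)$ by a comparison principle on~\eqref{eq_wstar}, and handle Assumption~\ref{ass_noncoexistence} by a dichotomy, finishing the supercritical case by a deterministic convergence plus compactness plus Markov argument followed by extinction of a subcritical residual $u$-population while $v$ is pinned near $\bar\xi^v$. The two ``delicate points'' you flag — continuity of the branching-diffusion survival probability in the death rate (the paper squeezes $\phi^{\gamma,vu}$ between $\phi^{vu}$ and $(1+C)\phi^{vu}$ via sub/super-solutions) and the post-invasion treatment of the resident — are precisely the ones the paper's proof treats explicitly or delegates to \cite{champagnat_2006}.

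A few small points. The lower branching diffusion must also have a reduced birth rate (the paper takes $b^v(x)(1-\gamma)$ for $Z^{\mathrm{inf}}$ to absorb the $q_Kp$ mutation loss); your formulation with identical birth rate $b^v$ and only perturbed death rates would need to absorb this loss into $\delta$, which is fine but should be made explicit. Your $\langle\nu^{K,u}_{T^K_\epsilon+T_2},\one\rangle=o(1)$ and the device of letting $T_2$ grow slowly with $K$ are unnecessary: the paper fixes $\gamma_2$ first, then a deterministic $T$, and the residual $u$-mass is $O(\gamma_2)$ — that is $\Theta(K\gamma_2)$ individuals, not $o(K)$ — which still dies out in time $O(\log K)$ under subcritical domination, so a fixed $T$ suffices. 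Finally, note that the paper's Assumption~\eqref{ass_KqK} as printed ($Kq_K\log K\to\infty$, $Kq_Ke^{KV}\to0$) is inconsistent with the uses made of it here and in Proposition~\ref{prop_mono}; you have implicitly (and correctly) used the intended separation $\log K\ll 1/(Kq_K)\ll e^{KV}$.
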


\begin{proof}
 We set $\gamma>0$ small enough to use Theorem~\ref{theo_exittimeintro}~: there exist $\gamma',\epsilon, V$ such that 
 $$
 \underset{\nu^{K,u}_0\in B(\bar\xi^u,\gamma'), \nu^{K,v}_0 \in B(0,\epsilon)}{\sup} \, \Pb_{\nu^K_0}\left( R^K_{\gamma}\geq T^K_\epsilon \wedge S_1^K\wedge e^{KV} \right) \underset{K\to\infty}{\to}0,
 $$
 where $R^K_\gamma$, $T^K_\epsilon$ have been defined by~\eqref{def_RKgamma}, \eqref{def_TKepsilon}.\\
 Let assume that $\epsilon\leq \gamma$ and that $K$ is large enough such that $q_K\leq \gamma$. 
 Then on $\{t\leq R^K_\gamma \wedge T^K_\epsilon \wedge S_1^K\}$, the process $(\nu^{K,v}_t)_{t\geq 0}$ is stochastically bounded:
 \begin{equation*}
 \frac{1}{K}Z^{inf} \preceq \nu^{K,v} \preceq \frac{1}{K} Z^{sup},
 \end{equation*}
 $Z^{inf}$, $Z^{sup}$ are two branching diffusion processes starting with one individual at location $x_0$, their birth rates are respectively $b^v(x)(1-\gamma)$ and $b^v(x)$, and their death rates are $d^v(x)+c^{vu}\cdot\bar\xi^u+2\bar c \gamma$ and $d^v(x)+c^{vu}\cdot\bar\xi^u-\bar c \gamma$.\\
 Let us set 
 \begin{align*}
 &\Upsilon^{sup}_{K\epsilon}=\inf\{t\geq 0, \langle Z^{sup}_t,\one \rangle  \geq K\epsilon\},\\
 &\Upsilon^{sup}_{0}=\inf\{t\geq 0, \langle Z^{sup}_t,\one \rangle  = 0 \},
 \end{align*}
 and respectively $\Upsilon^{inf}_{K\epsilon}$, $\Upsilon^{inf}_0$ associated with $Z^{inf}$.\\
 Using same kind of computations as in Lemma 3 in \cite{champagnat_2006}, we deduce that
 \begin{equation}
 \label{eq_micmac}
  \begin{aligned}
  \Pb_{\nu^K_0} & \left(  \theta_0 \leq S_1^K\wedge \frac{\eta}{q_K K}, V_0=u, \W(\nu^K_{\theta_0},\bar \xi^u)<\gamma \right) \\
 & \geq \Pb_{\delta_{x_0}}\left(\Upsilon^{sup}_0 \leq \frac{\eta}{q_K K} \wedge \Upsilon^{sup}_{K\epsilon}\right)-\Pb_{\nu^K_0}\left(\frac{\eta }{q_K K} \geq S_1^K\right)-\Pb_{\nu^K_0}\left(\frac{\eta}{q_K K} \wedge S_1^K \wedge T^K_\epsilon \geq R^K_{\gamma}\right).
 \end{aligned}
 \end{equation}
Theorem~\ref{theo_exittimeintro} implies that the last term tends to $0$ under Assumption~\eqref{ass_KqK}. The second term tends also to $0$ as the number of individuals is stochastically bounded from above by a birth and death process with birth rate $\bar{b}$ and competition rate $\underbar c$, thus Lemma 2 in \cite{champagnat_2006} implies that for any $\delta>0$, there exists $\eta>0$ such that 
$
\limsup_{K\to+\infty}\, \Pb(S_1^K\leq \frac{\eta}{Kq_K})\leq \delta.
$\\ 
Thus the main difficulty is to evaluate the first term. Theorem~\ref{theo_survieinf} implies that 
\begin{equation}
 \label{eq_Tsup}
 \Pb_{\delta_{x_0}}\left(\Upsilon^{sup}_0 \leq \frac{\eta}{q_K K} \wedge \Upsilon^{sup}_{K\epsilon}\right)\underset{K\to +\infty}{\to} \Pb_{\delta_{x_0}}(\Upsilon_0^{sup} < +\infty)=1-\phi^{\gamma,vu}(x_0),
\end{equation}
where $\phi^{\gamma,vu}$ is the solution to the following elliptic equation on $\CX$ with Neumann boundary condition
\begin{equation*}
  m^v \Delta_x \phi^{\gamma,vu}(x)+[b^v(x)-d^v(x)-c^{vu}\cdot\bar\xi^u+\bar c \gamma]\phi^{\gamma,vu}(x)-b^v(x)\phi^{\gamma,vu}(x)^2=0.
\end{equation*}
Let us show that this solution is close to $\phi^{vu}$.
Theorem~\ref{theo_survieinf} implies that $\phi^{\gamma,vu}$ is positive if and only if 
\begin{equation}
\label{eq_cond}
H^v-c^{vu}\cdot\bar\xi^u+\bar c \gamma=H^v-\frac{H^u \kappa^{vu}}{\kappa^{uu}}+\bar c \gamma>0.
\end{equation}

\noindent
\textbf{First case}: $H^v\kappa^{uu}-H^u\kappa^{vu}<0$ (Point 1 in Assumption~\ref{ass_noncoexistence})\\
We can find $\gamma$ small enough such that \eqref{eq_cond} is not satisfied, thus $\phi^{\gamma,vu}\equiv \phi^{vu} \equiv 0$.
In addition with \eqref{eq_micmac} and \eqref{eq_Tsup}, we deduce that 
$$
\Pb_{\nu^K_0}  \left(  \theta_0 \leq S_1^K\wedge \frac{\eta}{q_K K}, V_0=u, \W(\nu^K_{\theta_0},\bar \xi^u)< \gamma \right) \underset{K\to+\infty}{\to} 1.
$$
Proposition~\ref{prop_dimo} is proved for this case.\\

\noindent
\textbf{Second case}: $H^v\kappa^{uu}-H^u\kappa^{vu}>0$ (Point 2 in Assumption~\ref{ass_noncoexistence})\\
Hence \eqref{eq_cond} is satisfied for all $\gamma>0$. Let $C$ be $\frac{\bar c \gamma}{\inf_{y\in \CX}b^v(y)\phi^{vu}(y)}$ and set 
$\mathcal{L}^{\gamma}(f)=m^v \Delta_x f+(b^v(x)-d^v(x)-c^{vu}\cdot\bar\xi^u+\bar c \gamma)f-b^v(x)f^2$. We have
\begin{align*}
 &\mathcal{L}^{\gamma}(\phi^{vu})=\bar c \gamma \phi^{vu}\geq 0,\\
 & \mathcal{L}^{\gamma}((1+C)\phi^{vu})=(1+C)\phi^{vu} [\bar c \gamma -C b^v \phi^{vu} ]\leq 0.
\end{align*}
As $\phi^{\gamma,vu}$ is the unique solution to $\mathcal{L}^{\gamma}(f)=0$, we deduce the following inequalities from a comparison theorem (see for example Theorem III.5 in \cite{roques_2013})~: for any $x\in \CX$,
\begin{equation}
\label{eq_inegalitephi}
 \left(1+C\right)\phi^{vu}(x) \geq \phi^{\gamma,vu}(x) \geq \phi^{vu}(x).
\end{equation}
We split the end of the proof into three steps regarding as the proof of Lemma 3 in \cite{champagnat_2006}.\\
Let us fix $\alpha>0$. \eqref{eq_micmac}, \eqref{eq_Tsup} and \eqref{eq_inegalitephi} imply that if $K$ is large enough,
\begin{equation*}
\Pb_{\nu^K_0}  \left( T^K_0\leq  \frac{\eta}{q_K K} \wedge S_1^K \wedge  R^K_{\gamma} \wedge T^K_\epsilon \right) \geq 1-\phi^{vu}(x_0)-\alpha.
\end{equation*} 
On the other hand, if $K$ is large enough :
\begin{align*}
& \Pb_{\nu^K_0}  \left( T^K_\epsilon\leq  \frac{\eta}{q_K K} \wedge S_1^K \wedge  R^K_{\gamma} \right) \\
 & \geq \Pb_{\delta_{x_0}}\left(T^{inf}_{K\epsilon} \leq \frac{\eta}{q_K K} \wedge T^{inf}_{0}\right)-\Pb_{\nu^K_0}\left(\frac{\eta }{q_K K} \geq S_1^K\right)-\Pb_{\nu^K_0}\left(\frac{\eta}{q_K K} \wedge S_1^K \wedge T^K_\epsilon \geq R^K_{\gamma}\right),\\
 &\geq \phi^{vu}(x_0)-\alpha.
\end{align*}
Thus, the $v$-process $\nu^{K,v}$ reaches a non-negligible size $K\epsilon$, before  $\frac{\eta}{q_K K} \wedge S_1^K \wedge  R^K_{\gamma}$, with a probability that tends to $\phi^{vu}(x_0)$.\\
Once the mutant population has reached a non-negligible size, we can compare the stochastic process and the deterministic limiting process. Under point 2 in Assumption~\ref{ass_noncoexistence}, there exist $T>0$ and $\gamma_2>0$ such that for any $\xi_0\in adh(B(\bar\xi^u,\gamma)\times (B(0,2\epsilon) \setminus B(0,\epsilon))$, for any $t\geq T$,
$$
\xi_t\in B(0,\gamma_2)\times B(\bar\xi^v,\gamma_2),
$$
where $\xi$ is the solution to \eqref{eq_theoapproxpop} with a dimorphic initial state $\xi_0\in M_F(\CX\times \{u,v\})$. This can be proved using similar arguments than those of the proof of Lemma~\ref{lemma_enterinball}.\\
Moreover, using Theorem~\ref{theo_majoration} and Proposition~\ref{prop_lowerboundI},
$$
\underset{\nu^K_0}{\sup} \, \Pb_{\nu^K_0}\left( \underset{t\in [0,T]}{\sup} \W(\nu^K_t,\xi_{t,\nu^K_0}) < \gamma_2 \right) \underset{K\to+\infty}{\to}0,
$$
with $\xi_{t,\nu^K_0}$ the solution to~\eqref{eq_theoapproxpop} with initial state $\nu^K_0$.
The two previous results and the Markov property imply that, if $K$ is large enough,
\begin{align*}
\Pb_{\nu^K_0} & \left( T^K_\epsilon\leq  \frac{\eta}{q_K K} \wedge S_1^K \wedge  R^K_{\gamma}, S_1^K \geq \frac{\eta }{q_K K} +T, \nu^{K,u}_{T^K_\epsilon+T}\in B(0,2\gamma_2), \nu^{K,v}_{T^K_\epsilon+T}\in B(\bar\xi^v,2\gamma_2) \right) \\
 &\geq \phi^{vu}(x_0)-2\alpha.
\end{align*}
Finally, we use the Markov property at time $T^K_\epsilon +T$ and we conclude as in Lemma 3 in \cite{champagnat_2006}. If $\gamma_2$ is sufficiently small, we prove that, with a probability that tends to $1$, after time $T^K_\epsilon+T$, the $u$-population process $\nu^{K,u}$ will become extinct before its size reaches the threshold $\sqrt{\gamma_2}$ and before the $v$-process $\nu^{K,v}$ moves away from a neighborhood of the equilibrium $\bar\xi^v$.\\
That concludes the proof of Proposition~\ref{prop_dimo} for the second case.
\end{proof}

Theorem~\ref{theo_main} is deduced from Propositions~\ref{prop_mono} and \ref{prop_dimo} in a similar way to Theorem 1 in \cite{champagnat_2006} by using the transition probabilities of the jump process $(\Lambda_t)_{t\geq 0}$. \\

\section{Numerics}
\label{sec_numerics}
To end this paper, let us illustrate Theorem~\ref{theo_main} with a numerical example. We consider here a set of parameters similar to the one in \cite{doebeli_dieckmann_2003} and \cite{leman_mirrahimi_meleard_2014}. The location space is $\CX=(0,1)$ and the trait space is $\CU=[0,1]$. For all trait $u$, we consider that the growth rate is maximal for the location $x=u$. For
instance, the location space state can represent a variation of resources, as seed size for some birds, and so two populations with two different traits are not best-adapted to same resources. For some bird species, a gradual variation of seed size can determine a gradual variation of beak size~\cite{grant_grant_2002}. Moreover, the maximum value of the growth rate function is the same for all traits but when $u$ increases, the birth rate function goes faster to $0$, as follows~:
$$
b(x,u)=\max\{4-160 \cdot u(x-u)^2,0\}, \quad d(x,u)=1.
$$
That is, the birds with a small trait value are more generalists than the ones with a large trait value : they are adapted to a larger set of seed size~\cite{futuyma_moreno_1988}. All individuals move with the same diffusion coefficient $m^u=0.003$. The competition kernel is a constant $c=10$ and the mutation kernel $k(x,u,w)$ is the probability density of a Gaussian random variable with mean $u$ and standard deviation $0.05$ conditioned on staying in $\CU$. The initial population is composed of $K$ individuals at location $x=0.5$ and with trait $u=0.6$. 

\begin{figure}[h!]
\begin{center}
 \begin{minipage}{0.32\textwidth}
 \centering
  \includegraphics[width=4.8cm]{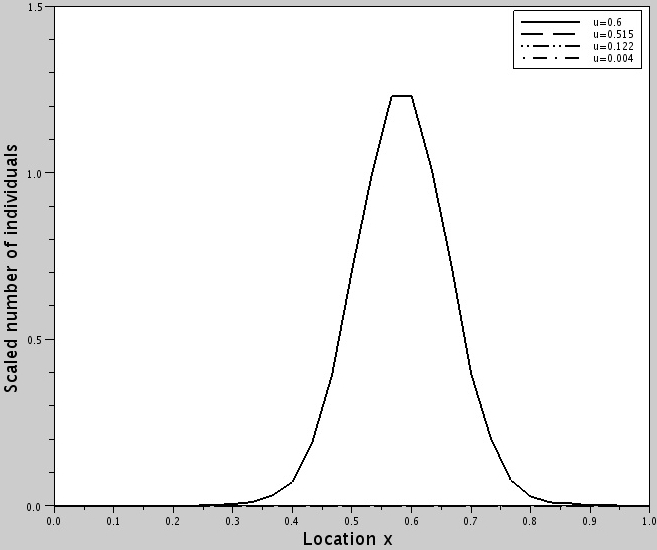}\\
  \small{ (a) t= 20}
 \end{minipage}
\begin{minipage}{0.32\textwidth}
\centering
  \includegraphics[width=4.8cm]{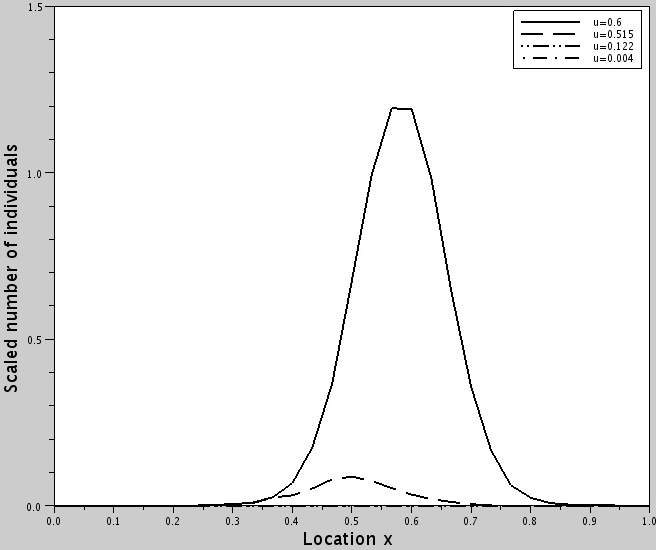}\\
  \small{ (b) t= 270}
 \end{minipage}
 \begin{minipage}{0.32\textwidth}
 \centering
  \includegraphics[width=4.8cm]{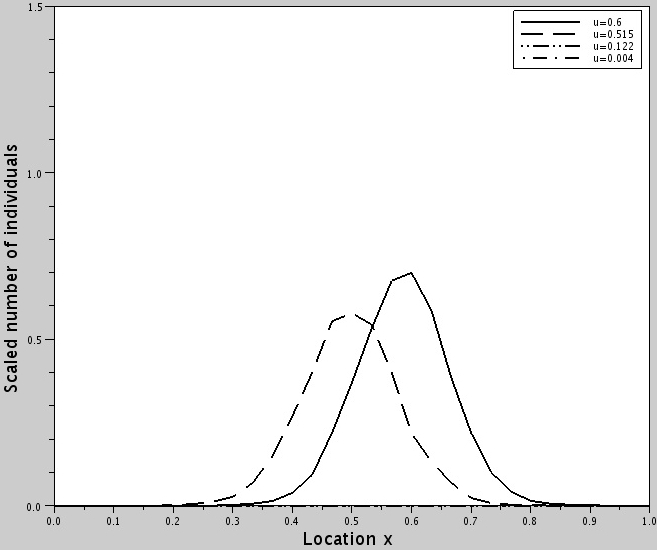}\\
  \small{ (c) t= 340}
 \end{minipage}
 
\begin{minipage}{0.32\textwidth}
\centering
  \includegraphics[width=4.8cm]{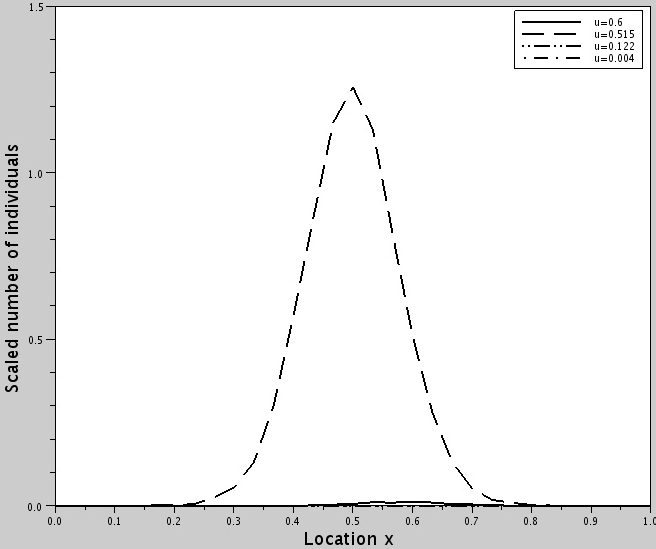}\\
  \small{ (d) t= 460}
 \end{minipage}
 \begin{minipage}{0.32\textwidth}
 \centering
  \includegraphics[width=4.8cm]{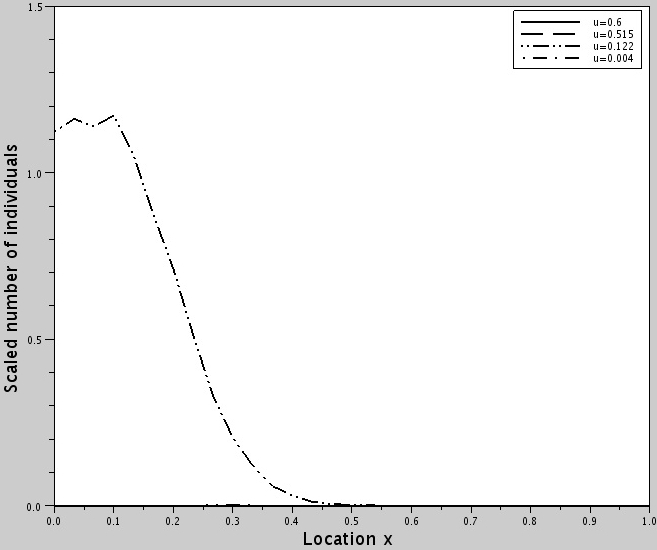}\\
  \small{ (e) t= 2 330}
 \end{minipage}
\begin{minipage}{0.32\textwidth}
\centering
  \includegraphics[width=4.8cm]{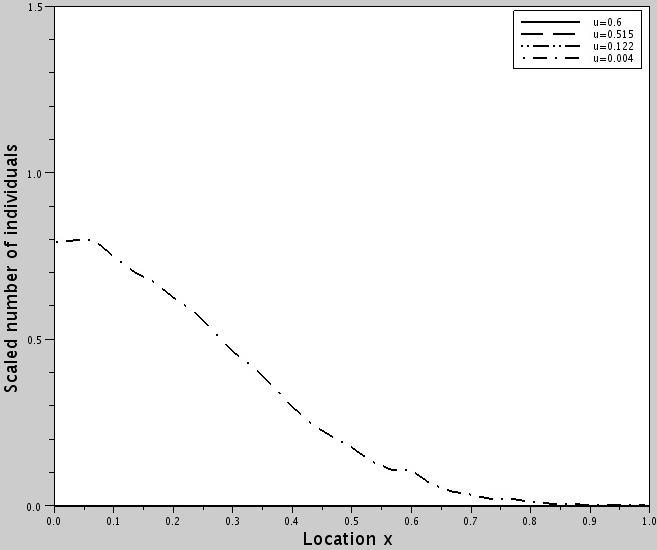}\\
  \small{ (f) t= 4 920}
 \end{minipage}
 \end{center}
 \caption{Simulations with $K=100 000$, $q_K=10^{-5}$.}
 \label{fig_simu}
\end{figure}

In Figure~\ref{fig_simu}, we observe the evolution of the trait associated with a change of spatial niches and spatial patterns over time. After a short time (a), the spatial distribution of the monomorphic population with trait $u=0.6$ stabilizes. Then, in Figures (b), (c) and (d), we observe a phenomenon of invasion and replacement~: some individuals with trait $u=0.515$ appear, invade and finally replace the previous population with trait $u=0.6$. Remark the change of spatial niche, see Figure (c). The locations of the population with trait $u=0.515$ are slightly smaller than the one of trait $u=0.6$. Three other phenomena of invasion and replacement with a displacement of the spatial niche are detected until the time $t=2330$ (Figure (e)). In a second phase, the population evolve to become more and more generalists (Figure (f)) : the length of the spatial niche is increasing at each event of invasion and replacement. \\ 
The simulations are computed using the algorithm presented in Parts 3 and 6 in Champagnat, M\'el\'eard~\cite{champagnat_meleard_2007}. It is an iterative construction, which gives an effective algorithm of the process. The diffusion motion is simulated using an Euler scheme for reflected diffusion process.\\

\noindent \textbf{Acknowledgments :} I would like to thank S. Méléard for
 her guidance during my work. I also thank C. Léonard, G. Raoul, C. Tran and A. Veber for their help. I acknowledge partial support by the ``Chaire Mod\'elisation Math\'ematique et Biodiversit\'e'' of VEOLIA - \'Ecole Polytechnique - MNHN - F.X.

\small
\bibliographystyle{plain}
\bibliography{biblio}

\begin{thebibliography}{10}

\bibitem{arnold_desvillettes_prevost_2012}
A.~Arnold, L.~Desvillettes, and C.~Pr{\'e}vost.
\newblock Existence of nontrivial steady states for populations structured with
  respect to space and a continuous trait.
\newblock {\em Commun. Pure Appl. Anal}, 11(1):83--96, 2012.

\bibitem{berestycki_hamel_roques_2005}
H.~Berestycki, F.~Hamel, and L.~Roques.
\newblock Analysis of periodically fragmented environment model: I-species
  persistence.
\newblock {\em J. Math. Biol.}, 51:75--113, 2005.

\bibitem{champagnat_2006}
N.~Champagnat.
\newblock A microscopic interpretation for adaptive dynamics trait substitution
  sequence models.
\newblock {\em Stochastic processes and their applications}, 116(8):1127--1160,
  2006.

\bibitem{champagnat_jabin_meleard_2014}
N.~Champagnat, P.E. Jabin, and S.~M{\'e}l{\'e}ard.
\newblock Adaptation in a stochastic multi-resources chemostat model.
\newblock {\em Journal de Math{\'e}matiques Pures et Appliqu{\'e}es},
  101(6):755--788, 2014.

\bibitem{champagnat_meleard_2007}
N.~Champagnat and S.~Méléard.
\newblock Invasion and adaptative evolution for individual-based spatially
  structured populations.
\newblock {\em J. Math. Biol.}, 55(2):147--188, 2007.

\bibitem{costa_hauzy_2014}
M.~Costa, C.~Hauzy, N.~Loeuille, and S.~M{\'e}l{\'e}ard.
\newblock Stochastic eco-evolutionary model of a prey-predator community.
\newblock {\em arXiv preprint arXiv:1407.3069}, 2014.

\bibitem{coville_2013}
J.~Coville.
\newblock Convergence to equilibrium for positive solutions of some
  mutation-selection model.
\newblock {\em arXiv preprint arXiv:1308.6471}, 2013.

\bibitem{dawson_gartner_1987}
D.~Dawson and J.~Gartner.
\newblock Large deviations from the mc-kean-vlasov limit for weakly interacting
  diffusions.
\newblock {\em Stochastics}, 20:247--308, 1987.

\bibitem{dembo_zeitouni_1998}
A.~Dembo and O.~Zeitouni.
\newblock {\em Large Deviations Techniques and Applications}.
\newblock Stochastic Modelling and Applied Probability, 1998.

\bibitem{desvillettes_ferriere_prevost_2004}
Desvillettes, Ferrière, and Prévost.
\newblock Infinite dimensional reaction-diffusion for population dynamics.
\newblock {\em preprint}, 4(3):529--605, 2004.

\bibitem{doebeli_dieckmann_2003}
M.~Doebeli and U.~Dieckmann.
\newblock Speciation along environmental gradients.
\newblock {\em Nature}, 421:259--263, 2003.

\bibitem{durrett_levin_1998}
R.~Durrett and S.~Levin.
\newblock Spatial aspects of interspecific competition.
\newblock {\em Theoretical population biology}, 53(1):30--43, 1998.

\bibitem{endler_1977}
J.A. Endler.
\newblock {\em Geographic variation, speciation, and clines}.
\newblock Number~10. Princeton University Press, 1977.

\bibitem{fontbona_2004}
J.~Fontbona.
\newblock Uniqueness for a weak nonlinear evolution equation and large
  deviations for diffusing particles with electrostatic repulsion.
\newblock {\em Stochastic processes and their applications}, 112(1):119--144,
  2004.

\bibitem{freidlin_wentzell_1984}
M.~Freidlin and A.~Wentzell.
\newblock {\em Random Perturbations}.
\newblock Springer, 1984.

\bibitem{friedman_1964}
A.~Friedman.
\newblock {\em Partial Differential Equations of Parabolic Type}.
\newblock Prentice-Hall, Englewood Cliffs, 1964.

\bibitem{futuyma_moreno_1988}
D.~Futuyma and G.~Moreno.
\newblock The evolution of ecological specialization.
\newblock {\em Ann. Rev. Ecol. Syst.}, 19:207--233, 1988.

\bibitem{graham_meleard_1997}
C.~Graham and S.~M{\'e}l{\'e}ard.
\newblock An upper bound of large deviations for a generalized star-shaped loss
  network.
\newblock {\em Markov Processes and Related Fields}, 3(2):199--224, 1997.

\bibitem{grant_grant_2002}
P.~Grant and B.~Grant.
\newblock Unpredictable evolution in a 30-year study of darwin's finches.
\newblock {\em Science}, 296:707--711, 2002.

\bibitem{kassen_2002}
R.~Kassen.
\newblock The experimental evolution of specialists, generalists, and the
  maintenance of diversity.
\newblock {\em J. Evol. Biol.}, 15:173--190, 2002.

\bibitem{leimar_doebeli_dieckmann_2008}
O.~Leimar, M.~Doebeli, and U.~Dieckmann.
\newblock Evolution of phenotypic clusters through competition and local
  adaptation along an environmental gradient.
\newblock {\em Evolution}, 62(4):807--822, 2008.

\bibitem{leman_mirrahimi_meleard_2014}
H.~Leman, S.~M\'el\'eard, and S.~Mirrahimi.
\newblock Influence of a spatial structure on the long time behavior of a
  competitive lotka-volterra type system.
\newblock {\em Discrete Contin. Dyn. Syst. Ser. B}, pages 469--493, 2015.

\bibitem{leonard_2001}
C.~L{\'e}onard.
\newblock Convex conjugates of integral functionals.
\newblock {\em Acta Math. Hungar.}, 93(4):253--280, 2001.

\bibitem{leonard_2001_bis}
C.~L{\'e}onard.
\newblock Minimizers of energy functionals.
\newblock {\em Acta Math. Hungar.}, 93(4):281--325, 2001.

\bibitem{leonard_1995}
C.~Léonard.
\newblock On large deviations for particle systems associated with spatially
  homogeneous boltzmann type equations.
\newblock {\em Prob.Th.Rel.Fields}, 101(1):1--44, 1995.

\bibitem{metz_geritz_MJV_1996}
JAJ. Metz, S.~Geritz, G.~Mesz{\'e}na, F.~Jacobs, and JS.~Van Heerwaarden.
\newblock Adaptive dynamics, a geometrical study of the consequences of nearly
  faithful reproduction.
\newblock {\em Stochastic and spatial structures of dynamical systems},
  45:183--231, 1996.

\bibitem{polechova_barton_2005}
J.~Polechov{\'a} and N.H. Barton.
\newblock Speciation through competition: a critical review.
\newblock {\em Evolution}, 59(6):1194--1210, 2005.

\bibitem{rao_ren_2002}
M.M. Rao and Z.D. Ren.
\newblock {\em Applications of Orlicz spaces}, volume 250.
\newblock CRC Press, 2002.

\bibitem{roques_2013}
L.~Roques.
\newblock {\em Modèles de réaction-diffusion pour l'écologie spatiale}.
\newblock Editions Quae, 2013.

\bibitem{tilman_kareiva_1997}
D.~Tilman and P.M. Kareiva.
\newblock {\em Spatial ecology: the role of space in population dynamics and
  interspecific interactions}, volume~30.
\newblock Princeton University Press, 1997.

\bibitem{tran_2008}
V.~Tran.
\newblock Large population limit and time behaviour of a stochastic particle
  model describing an age-structured population.
\newblock {\em ESAIM}, 12:345--386, 2008.

\bibitem{wang_yan_2013}
F.~Wang and L.~Yan.
\newblock Gradient estimate on convex domains and applications.
\newblock {\em Proceedings of the American Mathematical Society},
  141(3):1067--1081, 2013.

\end{thebibliography}

\end{document}